\documentclass[reqno,11pt]{amsart}
\usepackage{graphicx}
\usepackage{geometry}
%\linespread{1.1}
\geometry{a4paper}
\usepackage{amsthm,amsmath,verbatim,amssymb}
\usepackage{indentfirst}
\usepackage{geometry}
\usepackage{setspace}
\usepackage{graphicx}
\usepackage{tabularx}
\usepackage{booktabs}
\usepackage{tikz}
\usetikzlibrary{matrix, positioning, shapes.geometric, decorations.pathmorphing}
\usepackage{enumerate}
\usepackage{enumitem}
\usepackage{amsmath}
\usepackage{bbm}
\usepackage{caption}
\usepackage{bm}
\allowdisplaybreaks[4]
\setlength{\parindent}{2em}
\usepackage{hyperref}
\hypersetup{hidelinks} 

\captionsetup{width=\textwidth} % 设置 caption 的最大宽度

% 设置 showkeys 标签颜色为黑色

\newcommand{\abs}[1]{\left\vert#1\right\vert}
\newcommand{\norm}[1]{\left\Vert#1\right\Vert}

\renewcommand{\i}{\mathrm{i}}
\renewcommand{\mod}{\text{mod }}

\newcommand{\E}{\mathbb{E}}

\renewcommand{\bar}{\overline}

\newtheoremstyle{mystyle}{3.5pt}{3.5pt}{\upshape}{0cm}{\bfseries\rmfamily}{}{1em}{} %定义定理的环境，去掉斜体
\theoremstyle{mystyle}
\newtheorem{thm}{Theorem}[section] 
  
\newtheorem{lem}[thm]{Lemma} 

\newtheorem{cor}[thm]{Corollary}

\newcommand{\pfa}{\text{Pf }}
\newcommand{\pfaD}{\text{Pf}_D}
\newcommand{\pfaO}{\text{Pf}_O}
\newcommand{\M}{\mathbf{M}}
\newcommand{\U}{\mathbf{U}}
\newcommand{\ord}{\text{Ord}}
\newcommand{\semi}{\text{sc}}

\newcommand{\ep}{\epsilon}
\renewcommand{\tilde}{\widetilde}

\numberwithin{equation}{section}  

\begin{document}
	\title{Small gaps of GSE}
	\author[Feng]{Renjie Feng}

	\address{Sydney Mathematical Research Institute, The University of Sydney, Australia.}
	
	\email{renjie.feng@sydney.edu.au}
	
	\author[Li]{Jiaming Li}
	\address{LSEC, Academy of Mathematics and Systems Science, Chinese Academy of Sciences, Beijing,  China,100190.}
	\email{jiamingli19960730@163.com}
	\author[Yao]{Dong Yao}
	\address{Research Institute of Mathematical Sciences, Jiangsu Normal University, Xuzhou, China, 221116.}

	\email{dongyao@jsnu.edu.cn}
	
	\date{\today}

	\begin{abstract}
		In this paper, we study the smallest gaps for the Gaussian symplectic ensemble (GSE). We prove that the rescaled smallest gaps and their locations converge to a Poisson point process with an explicit rate. The approach provides an alternative proof for the GOE case and complements the results in \cite{FTW}.  By combining the main results from \cite{BB, FTW, FW2}, the study of the smallest gaps for the classical random matrix ensembles C$\beta$E and G$\beta$E for $\beta = 1, 2,$ and $4$ is now complete.   \end{abstract}
	
	\maketitle
	
	\section{Introduction}\label{introd}
	
	In random matrix theory, one of the main concerns is the distributions of eigenvalues and the gaps between consecutive eigenvalues. Back in the 1950s, Wigner predicted that the spacings between the spectrum of a heavy atom's nucleus should resemble the spacings between the eigenvalues of certain Gaussian random matrices. This prediction is confirmed by experiments in  nuclear physics \cite{M}. Wigner also predicted the distribution of average gaps, which follow the Gaudin-Mehta distribution \cite{Deift_1, M}. In fact, the Gaudin-Mehta distribution has been proved to be true even for a single gap in the bulk of the semicircle law of GUE \cite{T} and GOE \cite{ EY}, which is universal for more general random matrices.  Other than the average gaps and the single gap, another fundamental quantity of interest is the extreme spacings. 
	
	%So far, the asymptotic distributions of the single eigenvalue gap  of classical Gaussian  Orthogonal/Unitary/Symplectic  Ensembles (often denoted by G$\beta$E with $\beta$=1,2,4) 
	%and their universalities for 
	%Wigner matrices and general $\beta$-ensembles have been well understood  \cite{M, BEY,ey,  T, T2, Sh, BFG}. The limiting laws are called Gaudin-Mehta distributions \cite{M}, which depend on the specific symmetry class (the value of $\beta$).   
	%There are also a few results on the extreme gaps (the smallest/largest eigenvalue gap) \cite{BB, B2,GOE, FW,FW2,V,So}. 
	
	%Actually,  	eigenvalue gaps also have deep connections with many other areas. For instance, the behaviors of extreme gaps of GUE random matrix share striking similarlities with the gaps of Riemann zeta zeros \cite[Section 1.3]{BB}.  
	
	%On the other hand, for the distributions of extreme eigenvalue gaps of large random matrices,  the picture is much less complete.

	%In this work we focus on the small gap problem for GOE and GSE. We now give a quick review of existing results on the small gaps of random matrices before stating ours.

	To state the main results, we first recall two types of point processes studied intensively in random matrix theory.  The first is the Gaussian $\beta$-ensemble (G$\beta$E) for $\beta>0$: Given $n$ points $\lambda_1, \cdots ,  \lambda_n  $   on $\mathbb R$ with  the joint density
	\begin{equation}	\label{jpdf of Gauss ensemble}J(\lambda_1 , \cdots, \lambda_n )=\frac{1}{Z_{\beta, n}} {\prod_{k=1}^n e^{-\frac{\beta n}{4}\lambda_k^2}}  {\prod_{i<j}\left|\lambda_j-\lambda_i\right|^\beta}, \end{equation}
	% \\ =&\frac{1}{Z_{\beta, n}}e^{ -\beta n(-\frac 1 2\sum _{i\neq j}\log |\lambda_i-\lambda_j|+ \frac 1 4\sum \lambda^2_i)}  \end{aligned}$$
	where by the Selberg integral, $$Z_{\beta, n}=(2\pi)^{n/2}\left(\frac{\beta n}2\right)^{-\frac{n(n-1)\beta}{4}-\frac n 2}\prod_{j=1}^n\frac{\Gamma(1+j\frac{\beta}{2})}{\Gamma(1+\frac{\beta}2)}.$$ In particular, $\beta=1, 2$ and $4$ correspond to the joint density of eigenvalues of the classical random matrices of GOE, GUE and GSE, respectively.	A remarkable fact is that GUE is a determinantal point process, while GOE and GSE are Pfaffian point processes.   The limit of the global distribution of the point processes of G$\beta$E is given by the Wigner semicircle law   as $n\to\infty$:   \begin{equation}\label{semicirclelawa}
		\rho_{\semi}(x) = \dfrac{1}{2\pi}\sqrt{4-x^2},\quad x \in [-2,2]. 
	\end{equation}   
	Another point process is the circular $\beta$-ensemble (C$\beta$E) for $\beta>0$:  Given $n$ points $e^{i\theta_1}, \cdots ,  e^{i\theta_n}  $  on $\mathbb{S}^1$,  the eigenangles $\theta_1, \cdots ,  \theta_n $ have the joint density 
	
	$$J(\theta_1 , \cdots, \theta_n )=\frac{1}{C_{\beta, n}}  \prod_{i<j}\left|e^{i\theta_j}-e^{i\theta_i}\right|^\beta, $$
	where by the Selberg integral again, $$C_{\beta, n}=(2\pi)^{n}\frac{\Gamma(1+\beta n/2)}{(\Gamma(1+\beta /2))^n}.$$ In particular, $\beta=1, 2$ and $4$ correspond to the joint density of eigenangles of classical random matrices of COE,  CUE and CSE, respectively. Similarly, CUE has a determinantal structure, while COE and CSE have Pfaffian structures.  The global distribution of the eigenangles of C$\beta$E tends to the uniform measure on $\mathbb{S}^1$ as $n\to\infty$.

	%For both the circular ensemble and Gaussian ensemble, a very critical fact is that the point processes have determinantal structures for $\beta=2$, and have Pfaffian structures for $\beta=1$ and $4$. These structures make the study of the 
	We first summarize the existing results on the smallest gaps for both the circular ensembles and Gaussian ensembles. In \cite{V}, 
	Vinson  first obtained  limiting distributions of rescaled smallest gap of CUE and GUE. In \cite{So},  Soshnikov  studied  the smallest gaps of determinantal point processes with translation invariant kernels.  In \cite{BB},  Ben Arous-Bourgade employed Soshnikov's method  to further derive the limiting joint density of $k$ smallest gaps of CUE and GUE for  $k\geq 1$, along with the locations where these gaps occur. Note that the proofs in \cite{BB, So, V} rely heavily on the determinantal structures.  %For the determinantal point processes,  the correlation function is the determinant of a symmetric kernel matrix. Due to the rapid decay of the kernel, e.g., GUE,  the leading order terms appear on the diagonal $2\times 2$ blocks. The results then follow from the estimates of the determinant by the Hadamard-Fischer inequality. 
	
	In \cite{FTW},  the authors derived the smallest gaps for GOE, addressing the problem beyond the determinantal structures. Note that \cite{FTW} only proves the limiting distribution of the smallest gaps of GOE, without providing information on their locations.  The  intuitive idea behind the method in \cite{FTW} is based on the following observation from a statistical physics perspective: for a pair of two particles with charge 1 forming the smallest gap, these two particles will stick together to form one `double particle' with charge 2 in the limit. That is, the original system of one-component log-gas will become a new system of two-component log-gas. To prove the main results, it is necessary to study the limit of the ratios of the partition functions of these two systems, which can be analyzed using Selberg-type integrals.  In \cite{FW2}, using the same method, the authors derived the smallest gaps of C$\beta$E with integer-valued $\beta$, with results holding particularly for the classical random matrices of COE, CUE, and CSE.
	
	However, one of the main obstacles with this method is that the Selberg-type integrals are intractable in most cases, such as GSE. In the current paper,  we derive the smallest gaps of GSE and the locations where they occur. Our approaches relies on combinatorial arguments to bound certain quantities arising from the Pfaffian structure. We expect our computations  to provide new insights into the study of various Pfaffian point processes. In particular, it provides an alternative proof for  GOE, where the previously omitted location information will be included.  This, together with the main results in \cite{BB, FTW, FW2}, will complete the study of the smallest gaps and their locations for the classical random matrices of C$\beta$E and G$\beta$E with $\beta=1, 2$ and $4$. 
	
	Regarding the largest gaps, only a few results are known. In \cite{BB}, Ben Arous-Bourgade determined the decay order of the largest gaps for both the CUE and GUE (within the interior of the semicircle law). Additionally, \cite{FW} proves that these largest gaps converge to Poisson point processes. Consequently, the fluctuations of the largest gaps are shown to follow Gumbel distributions in the limit. Again, the determinantal structures of the CUE and GUE play a crucial role in the analyses in \cite{BB, FW}. It is worth noting that the decay orders and fluctuations of the largest gaps for other ensembles, such as the COE and CSE, remain unknown.

	The works \cite{B2, LLM}  proved that the results in \cite{BB, FTW, FW}  are universal, 
	i.e., they hold for  Wigner ensembles with general distributions.  Figalli-Guionnet  also derived the smallest gaps for a several-matrix model in \cite{FG}. %where multiple  invariant ensembles  interact through a potential. 

	\subsection{Main results}
	%As just mentioned, in this paper we focus on Gaussian-$\beta$ ensembles with eigenvalue p.d.f.s given in \eqref{jpdf of Gauss ensemble}.
	%	We now describe the result of \cite{GOE} in detail. 
	Let $\lambda_1<\lambda_2< \cdots<\lambda_n$ be sampled from G$\beta$E with joint density given in  \eqref{jpdf of Gauss ensemble}. For any fixed small $\epsilon>0$, consider the two-dimensional point process
	$$
	\label{gaps process}
	\Upsilon^{(\beta)}_{n} = \sum_{i=1}^{n-1}\delta_{\left(n^{\frac{\beta +2}{\beta +1}}(\lambda_{i+1}-\lambda_{i}),\,\,\, \lambda_i\right)}\cdot \mathbbm{1}[\lambda_i\in(-2+\ep,2-\ep)].
	$$
	% (Here the restriction to $(-2+\ep,2-\ep)$ is due to technical reasons arising from Plancherel-Rotach asymptotics, and should not be essential for the small gaps problem.)%As mentioned before, for the $\beta=2$ case (GUE), Ben Arous and Bourgade has proved the convergence of  $\Upsilon_n^{(\beta)}$ to a  (non-homogeneous) Poisson point process. In this work we focus on GSE and GOE. Let
	%$$
	%\rho_{\semi}(x) = \dfrac{1}{2\pi}\sqrt{4-x^2}  \mathbbm{1}[x \in (-2,2)]
	%$$
	%be the semicircle law.\
	The main result is 
	\begin{thm}
		\label{small gaps of GbetaE}
		For $\beta = 1,2$ and $4$, $\Upsilon^{(\beta)}_{n}$ converges to a Poisson point process $\Upsilon^{(\beta)}$ on $(0,+\infty)\times (-2+\ep,2-\ep)$ with intensity
		$$
		\mathbb{E}\Upsilon^{(\beta)}(A\times I)=\left(\dfrac{1}{c_{\beta}}\int_A u^{\beta}\mathrm{d}u\right)\int_I\left(2\pi\rho_{\semi}(x)\right)^{\beta+2}\mathrm{d}x,
		\label{definition of the poisson}
		$$
		where $\rho_{sc}$ is the semicircle law \eqref{semicirclelawa}, $A$ is any bounded measurable set in $(0,+\infty)$, $I$ is any interval in $(-2+\ep,2-\ep)$,  and 
		\begin{align*}
			c_1 = {48\pi},\quad
			c_2 = {48\pi^2},\quad
			c_4 = {540\pi^2}.
		\end{align*}
	\end{thm}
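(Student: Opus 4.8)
The plan is to establish convergence of $\Upsilon^{(\beta)}_n$ to a Poisson point process by the method of factorial moments: it suffices to show that for any finite disjoint collection of product sets $A_1\times I_1,\dots,A_m\times I_m$, the joint factorial moments of $\bigl(\Upsilon^{(\beta)}_n(A_j\times I_j)\bigr)_j$ converge to those of independent Poisson variables with the stated means. Because $\Upsilon^{(\beta)}_n$ is a simple point process of rescaled nearest-neighbour gaps, the $k$-th factorial moment is controlled by the probability that $k$ specified pairs of consecutive eigenvalues simultaneously have rescaled gap in the prescribed windows and location in the prescribed intervals. The key analytic input is the local behaviour of the correlation functions: for $\beta=1,4$ the process is Pfaffian, so I would work with the $2\times 2$ (resp.\ $4\times 4$, via the quaternion/$2\times2$ matrix kernel) correlation kernel of GOE/GSE and its scaling limit near a bulk point $x_0\in(-2+\ep,2-\ep)$, where after rescaling by $2\pi\rho_{\semi}(x_0)$ one recovers the universal bulk Pfaffian kernel. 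For $\beta=2$ this recovers the Ben Arous--Bourgade computation as a special case.

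The main steps, in order: (i) reduce the problem, via inclusion--exclusion on which nearby eigenvalues are ``extra'', to estimating integrals over the region $\{\lambda_{i+1}-\lambda_i \approx s\,n^{-(\beta+2)/(\beta+1)}\}$ of the relevant $\ell$-point correlation function $\rho_\ell$, for $\ell=2k, 2k+1,\dots$; (ii) show that the dominant contribution comes from $\ell=2k$ configurations that form exactly $k$ tight pairs, each pair contributing a factor that, after the change of variables $u = n^{(\beta+2)/(\beta+1)}(\lambda_{i+1}-\lambda_i)$ and integration over the pair's centre of mass, produces $\frac{1}{c_\beta}\int_{A_j} u^\beta\,du \cdot \int_{I_j}(2\pi\rho_{\semi}(x))^{\beta+2}\,dx$; (iii) bound all other configurations — triples or more clustering, or pairs that are near but not consecutive — as lower order in $n$, using the vanishing of the correlation function when two arguments collide (the Vandermonde factor $|\lambda_i-\lambda_j|^\beta$ forces $\rho_\ell$ to vanish to order $\beta$, which is exactly what makes $u^\beta\,du$ integrable at $0$ and kills higher clusters at the correct rate); (iv) assemble the factorial moments, check the product structure across distinct $j$, and invoke the factorial-moment criterion for Poisson convergence. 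Step (ii) is where the constant $c_\beta$ is pinned down: for a single tight pair one computes $\lim_{s\to 0} s^{-\beta}\,(\text{local two-point density near coincidence})$ from the Taylor expansion of the bulk kernel at the origin, and the specific values $c_1=48\pi$, $c_2=48\pi^2$, $c_4=540\pi^2$ come out of the leading coefficients of that expansion (the Pfaffian of the derivative matrix of the GSE kernel at $0$).

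The hard part will be step (iii) for the Pfaffian cases: unlike the determinantal case, where Hadamard-type bounds on the kernel give clean tail estimates, here one must bound Pfaffians of $2\ell\times2\ell$ (for GOE) or $4\ell\times4\ell$ (for GSE, in the scalar formulation) antisymmetric matrices built from the matrix kernel, uniformly over configurations with some arguments close together, and extract the correct power of $n$. This is precisely the ``combinatorial arguments to bound certain quantities arising from the Pfaffian structure'' alluded to in the introduction; I would handle it by expanding the Pfaffian over perfect matchings, grouping terms by which close pairs are matched to each other versus to distant points, and showing that matchings linking a close pair to a distant point contribute an extra decaying factor (because the off-diagonal kernel entries between well-separated bulk points are $O(1)$ in rescaled variables but the combinatorial count does not compensate), while the diagonal-matching terms reproduce the product of single-pair contributions. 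A secondary technicality is controlling the semicircle-density variation across each window and the discrepancy between the exact finite-$n$ kernel and its bulk limit, which is routine given known rigidity and local-law estimates for G$\beta$E with $\beta=1,2,4$.
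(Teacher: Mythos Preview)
Your overall architecture is essentially the paper's: reduce to factorial moments of an auxiliary process, split the Pfaffian of the $2k$-point correlation matrix into a ``diagonal'' part (perfect matchings that keep each tight pair together) and an ``off-diagonal'' remainder, compute the diagonal part explicitly via the bulk sine-kernel expansion to extract $c_\beta$, and show the remainder and the inclusion--exclusion error terms are negligible. So far so good.

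The genuine gap is in step (iii). Your proposed mechanism --- ``off-diagonal kernel entries between well-separated bulk points are $O(1)$ in rescaled variables but the combinatorial count does not compensate'' --- does not close as stated. The difficulty is that a permutation in $\mathcal{P}_{4k}\setminus\mathcal{D}_{4k}$ uses a \emph{mixture} of on-diagonal and off-diagonal block entries, and the on-diagonal entries of the raw GSE kernel matrix have wildly unbalanced growth: for $|x_i-\lambda_i|\lesssim n^{-6/5}$ one has $S_n(\lambda_i,\lambda_i)\asymp n$ but $V_n(\lambda_i,x_i)\asymp n^{9/5}$, while the antiderivative entry $J_n$ is $O(\log n)$. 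A single small off-diagonal factor can therefore be swamped by a large on-diagonal factor elsewhere in the product, and a naive termwise bound on the Pfaffian expansion does \emph{not} yield an exponent strictly below $6k/5$.

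What the paper does to fix this is the main technical contribution, and it is absent from your sketch: before bounding, they apply two rounds of congruent transformations to $\mathbf{M}_{4k}$ that leave the Pfaffian (and in fact $\text{Pf}_D$ and $\text{Pf}_O$ separately) invariant. Round~1 consists of specific row/column operations tailored to cancel the leading terms (e.g.\ subtracting $(4i+1)$-th rows from $(4i+3)$-th rows to exploit $x_i\approx\lambda_i$), and Round~2 rescales odd/even indexed rows and columns by $n^{\pm 6/5}$. After this, the entries acquire balanced orders recorded in a matrix $\mathbf{U}_{4k}$, and the bound $\text{Ord}_1<6k/5$ for every off-diagonal matching is obtained by a delicate combinatorial argument: partitioning the nonzero positions into classes $D_1,\ldots,D_4,O_1,\ldots,O_6$, deriving four linear identities among the $|D_i|,|O_j|$ from congruence-class counting, and taking a carefully chosen linear combination to reduce the claim to the inequality $|O_1|\le\sum_{i\ge 2}|O_i|$, which is proved via a parity argument on a function $h_\sigma$. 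The same machinery, with additional classes $D_5,D_6,O_7,\ldots,O_{10}$, handles the $(2k+1)$- and $(2k+2)$-point error terms $E_{1,k,n},E_{2,k,n}$. None of this is routine, and your proposal does not indicate how the imbalance of orders would be tamed; without something like these transformations, the Pfaffian bounds will not go through.
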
 
	As we mentioned earlier, the case for $\beta=2$ has been proven in \cite{BB}. For $\beta=1$,  \cite{FTW} only studies the 1-dimensional point process of the smallest gaps $\sum_{i=1}^{n-1}\delta_{n^{{3/2}}(\lambda_{i+1}-\lambda_{i})}$, where the location information is missing. In this paper, we prove Theorem \ref{small gaps of GbetaE} for $\beta=4$ in detail, and then sketch the proof for GOE.  %We also comment that \cite[Theorem 1.2 (ii)]{B2} in fact needs the full power of such a statement for comparisons with GOE case.

	%	For $\beta=1$,  take $I=(-2,2)$ (ignoring the restriction to $(-2+\ep, 2-\ep)$) and consider the first margin of $\Upsilon^{(1)}$,  one  obtains exactly \cite[Theorem 1]{GOE}. 
	%In other words, compared to \cite{GOE}, the GOE case of Theorem \ref{small gaps of GbetaE} contains the additional information of the locations where small gaps occur.
	
	%\footnote{The paper \cite{B2}  cites \cite{GOE} for the base case of GOE, but as explained \cite{GOE} only proves the convergence for the first marginal of $\Upsilon^{(1)}_n$.} 

	As a direct corollary of Theorem \ref{small gaps of GbetaE}, we obtain the limiting distribution of the $k$-th smallest gap in the bulk of the semicircle law. %Its proof is deferred to the end of Section \ref{sec:pfoutline}.
	\begin{cor}
		\label{density of small gap}
		Let $I$ be any interval in $(-2+\ep,2-\ep)$, and  $t_{k}^{(\beta)}(I)$ be the $k$-th smallest  gap for eigenvalues falling in $I$, % i.e.,  the $k$-th smallest number of the set $
		% 	\left\{\lambda_{i+1}-\lambda_{i}| \lambda_i \in I\right\}.$
		we define the 
		normalized gap $${\tau}_{k}^{(\beta)}(I):=
		\left(\dfrac{1}{(\beta+1)c_{\beta}}\int_{I}\left(2\pi\rho_{\semi}(x)\right)^{\beta+2}\mathrm{d}x\right)^{\frac{1}{\beta+1}}
		n^{\frac{\beta+2}{\beta+1}}t_{k}^{(\beta)}(I).$$
		Then for $\beta = 1,2$ and $4$, we have 
		$$
		\label{pdf of small gaps}
		\lim_{n\rightarrow \infty}\mathbb{P}\left({\tau}_{k}^{(\beta)}(I)\in A\right) =\int_{A}\frac{\beta+1}{(k-1) !} x^{k(\beta+1)-1} e^{-x^{\beta+1}}\mathrm{d}x
		$$
		for any bounded  interval $A\subset (0,+\infty)$.
	\end{cor}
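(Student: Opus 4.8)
The plan is to derive the corollary directly from Theorem~\ref{small gaps of GbetaE} via the standard dictionary between the $k$-th order statistic of a point process and its counting function, followed by an elementary change of variables and one differentiation. To set up, introduce the counting function
$$
N_n(s) \;:=\; \Upsilon^{(\beta)}_{n}\big((0,s]\times I\big) \;=\; \#\Big\{\, i : \lambda_i\in I,\ n^{\frac{\beta+2}{\beta+1}}(\lambda_{i+1}-\lambda_i)\le s \,\Big\},
$$
so that for every $s>0$ the event identity
$$
\Big\{\, n^{\frac{\beta+2}{\beta+1}}\,t_{k}^{(\beta)}(I) > s \,\Big\} \;=\; \{\, N_n(s)\le k-1 \,\}
$$
holds, and set
$$
\Lambda_I(s)\;:=\;\E\,\Upsilon^{(\beta)}\big((0,s]\times I\big)\;=\;\Big(\frac{1}{c_\beta}\int_0^s u^\beta\,\mathrm{d}u\Big)\int_I\big(2\pi\rho_{\semi}(x)\big)^{\beta+2}\,\mathrm{d}x\;=\;\frac{s^{\beta+1}}{(\beta+1)c_\beta}\int_I\big(2\pi\rho_{\semi}(x)\big)^{\beta+2}\,\mathrm{d}x .
$$

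First I would pass from Theorem~\ref{small gaps of GbetaE} to convergence in law of $N_n(s)$. For each $\delta\in(0,s)$ the rectangle $[\delta,s]\times I$ is relatively compact in $(0,+\infty)\times(-2+\ep,2-\ep)$ and has $\Upsilon^{(\beta)}$-null boundary, so the point-process convergence gives $\Upsilon^{(\beta)}_n([\delta,s]\times I)\Rightarrow\Upsilon^{(\beta)}([\delta,s]\times I)$, a Poisson variable of mean $\Lambda_I(s)-\Lambda_I(\delta)$; letting $\delta\downarrow0$ and using $\Lambda_I(\delta)\to0$ together with the absence of anomalously small gaps (which is part of the content of convergence on the full half-line $(0,+\infty)$ rather than only away from the origin) yields $N_n(s)\Rightarrow\mathrm{Poisson}(\Lambda_I(s))$. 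Hence
$$
\lim_{n\to\infty}\P\big(n^{\frac{\beta+2}{\beta+1}}\,t_{k}^{(\beta)}(I)>s\big)\;=\;\sum_{j=0}^{k-1}\frac{\Lambda_I(s)^{\,j}}{j!}\,e^{-\Lambda_I(s)} .
$$

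Next I would rescale and differentiate. Writing $C_I:=\frac{1}{(\beta+1)c_\beta}\int_I(2\pi\rho_{\semi}(x))^{\beta+2}\,\mathrm{d}x$, one has $\Lambda_I(s)=C_I s^{\beta+1}$ and $\tau_{k}^{(\beta)}(I)=C_I^{1/(\beta+1)}\,n^{\frac{\beta+2}{\beta+1}}\,t_{k}^{(\beta)}(I)$, so the substitution $s=C_I^{-1/(\beta+1)}x$ collapses $\Lambda_I(s)$ to exactly $x^{\beta+1}$ and gives $\lim_{n\to\infty}\P(\tau_{k}^{(\beta)}(I)>x)=\sum_{j=0}^{k-1}\frac{x^{j(\beta+1)}}{j!}e^{-x^{\beta+1}}$ for $x>0$. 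Differentiating in $x$: with $y=x^{\beta+1}$ the partial-Poisson sum $\sum_{j=0}^{k-1}y^j e^{-y}/j!$ telescopes to derivative $-y^{k-1}e^{-y}/(k-1)!$, while $\mathrm{d}y/\mathrm{d}x=(\beta+1)x^\beta$ and $(k-1)(\beta+1)+\beta=k(\beta+1)-1$, so the limiting density equals $\frac{\beta+1}{(k-1)!}\,x^{k(\beta+1)-1}e^{-x^{\beta+1}}$. Since this density is continuous and the tail convergence holds at every $x>0$, integrating over any bounded interval $A\subset(0,+\infty)$ gives the stated limit of $\P(\tau_{k}^{(\beta)}(I)\in A)$.

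The whole argument is bookkeeping once Theorem~\ref{small gaps of GbetaE} is available; the only step that is not purely formal is the passage from the point-process convergence to that of $N_n(s)$, namely ruling out mass escaping to $u=0$ (gaps of size $o(n^{-(\beta+2)/(\beta+1)})$) in the limit. But this is exactly what is encoded in the convergence of $\Upsilon^{(\beta)}_n$ on all of $(0,+\infty)\times(-2+\ep,2-\ep)$, so I expect no genuine obstacle beyond invoking the theorem.
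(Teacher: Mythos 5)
Your proposal is correct, and since the paper states Corollary~\ref{density of small gap} without giving a proof (``as a direct corollary of Theorem~\ref{small gaps of GbetaE}''), your bookkeeping route via the counting function $N_n(s)=\Upsilon^{(\beta)}_n((0,s]\times I)$, the duality $\{n^{\frac{\beta+2}{\beta+1}}t_k^{(\beta)}(I)>s\}=\{N_n(s)\le k-1\}$, and the change of variables/differentiation is exactly the intended argument. One small simplification: you do not actually need the detour through $[\delta,s]\times I$ followed by $\delta\downarrow 0$ (and the attendant worry about mass escaping to $u=0$). Theorem~\ref{small gaps of GbetaE} is proved via the factorial-moment method of Lemmas~\ref{lemt} and~\ref{lemf}, which applies to an arbitrary \emph{bounded measurable} $A\subset(0,+\infty)$ -- in particular directly to $A=(0,s]$ -- so $N_n(s)\Rightarrow \mathrm{Poisson}(\Lambda_I(s))$ follows at once from the theorem as stated, with no separate tightness-near-zero argument required. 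The remaining algebra (identifying $\Lambda_I(s)=C_I s^{\beta+1}$, substituting $s=C_I^{-1/(\beta+1)}x$, differentiating the truncated Poisson tail, and checking the exponent $(k-1)(\beta+1)+\beta=k(\beta+1)-1$) is carried out correctly.
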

	%Again, the $\beta=1$ case of Corollary \ref{density of small gap} coincides with  \cite[Corollary 1]{GOE}, as expected. 
	%In fact, Corollary \ref{density of small gap} also agrees with the small gap distribution for Circular-$\beta$ Ensemble (with $\beta$ taking to be 1,2,4). This can be explained  from the local sine-kernel universality of G$\beta$E and C$\beta$E.

	% \subsection{Outline of the proof}\label{subsec:stra}
	%A general intuition for the extreme gap problem is the so-called \emph{Poisson ansatz}: treat the eigenvalue gaps as if they were i.i.d. sampled from Gaudin-Mehta distributions.  Indeed, it is generally expected that eigenvalues  should behave almost independently
	%when they are distant from each other. %(Here `distant' means the distances of the eigenvalues are much larger than the mean spacings).
	%In connections with the small gap problem, 
	%we note that several recent works \cite{CEX, H} consider the probability of simultaneous existences of eigenvalues/singular values in multiple distant intervals. See \cite[Proposition 3.8]{CEX} for  non-Hermittian matrices 
	%and \cite[Theorem 1.5]{H} for real symmetric matrices. 
	
	Now we outline the main steps to prove Theorem \ref{small gaps of GbetaE}. 
	%We will adapt the strategy originally developed by Soshnikov \cite{So}, along with some modifications, together with the Pfaffian structure of GSE. This strategy is also used in \cite{BB, FTW, FW2}.  
	Given the eigenvalues $\lambda_1<\lambda_2< \cdots<\lambda_n$ of GSE. For given $\epsilon>0$ small,
	consider the point processes consisting of the eigenvalues of GSE in the bulk $(-2+\epsilon,2-\epsilon)$, 
	$$ 
	\label{process of eigenvalues}
	\xi_{n}:=\sum_{i=1}^n \delta_{\lambda_i}\mathbbm{1}[\abs{\lambda_i}<2-\epsilon]. 
	$$
	Now we construct an auxiliary  point process $\tilde{\xi}_n\subset \xi_n $ as follows. 
	For any bounded measurable set $A\subset (0,+\infty)$,  let $A_n = n^{-6/5} A$. The process $\tilde\xi_n$ is selected from  $\xi_n$ such that the following two conditions hold:
	\begin{itemize}
		\item[-] $|\xi_{n}\left(\lambda_k+A_n\right)|=1.$ 
		\item[-] There does not exists $\lambda_{\ell} \neq \lambda_k$ such that $|\xi_{n}\left(\lambda_{\ell}+A_n\right)|=1$
		and $\abs{\lambda_k-\lambda_{\ell}}<\log^{-1} n$.
	\end{itemize} 
	Theorem \ref{small gaps of GbetaE} is a direct consequence of the following two lemmas. We first have 
	%Lemma \ref{lemt} and Lemma \ref{lemf}. 
	\begin{lem} \label{lemt}The two point processes $\Upsilon_{n}^{(4)}(A\times \cdot)$ and $\tilde{\xi}_{n}(\cdot)$ are asymptotically equivalent, i.e., for any bounded interval $I\subset (-2+\ep,2-\ep)$, the cardinalities satisfy  
		$$\abs{\Upsilon_{n}(A\times I)}-
		|{\Tilde{\xi}_{n}(I) }| \to 0 \quad \mbox{in probability as $n\to\infty$}. $$
	\end{lem}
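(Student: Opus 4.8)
The plan is to prove the stronger statement that $|\Upsilon_n^{(4)}(A\times I)|=|\tilde\xi_n(I)|$ on an event of probability tending to $1$; convergence in probability of the difference to $0$ follows immediately. Fix $M$ with $A\subset(0,M]$, so that $A_n=n^{-6/5}A\subset(0,Mn^{-6/5}]$, and fix compact intervals $I\subset I'\subset(-2+\ep,2-\ep)$ with positive distance between successive boundaries. For a constant $C=C(M)$ to be fixed later, let $B_1^n$ be the event that some three eigenvalues $\lambda_a<\lambda_b<\lambda_c$ with $\lambda_a\in I'$ satisfy $\lambda_c-\lambda_a\le Cn^{-6/5}$, and let $B_2^n$ be the event that there exist two distinct eigenvalues $\lambda_k,\lambda_\ell\in I'$ with $|\xi_n(\lambda_k+A_n)|=|\xi_n(\lambda_\ell+A_n)|=1$ and $Cn^{-6/5}<|\lambda_k-\lambda_\ell|<\log^{-1}n$.

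The first step is a deterministic reduction. On $(B_1^n)^c$, for $n$ large and any eigenvalue $\lambda_k\in I$, the interval $\lambda_k+A_n$ lies in the bulk and, since two eigenvalues inside it together with $\lambda_k$ would form a forbidden triple, it contains at most one eigenvalue, namely $\lambda_{k+1}$, and it contains exactly one precisely when $\lambda_{k+1}-\lambda_k\in A_n$. Hence $\{\lambda_k\in I:|\xi_n(\lambda_k+A_n)|=1\}$ is exactly the set of indices counted by $|\Upsilon_n^{(4)}(A\times I)|$. Moreover, on $(B_1^n)^c\cap(B_2^n)^c$, no $\lambda_k\in I$ satisfying $|\xi_n(\lambda_k+A_n)|=1$ is discarded by the second bullet in the construction of $\tilde\xi_n$: a witness $\lambda_\ell$ lies in $I'$, also satisfies $|\xi_n(\lambda_\ell+A_n)|=1$, and applying the same triple-free analysis to $\lambda_\ell$ shows that either the two associated small-gap pairs overlap, forcing a triple at scale $O(n^{-6/5})$ and contradicting $(B_1^n)^c$, or they are disjoint with $|\lambda_k-\lambda_\ell|>Cn^{-6/5}$, which is precisely $B_2^n$. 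Therefore $\tilde\xi_n(I)$ coincides with the same set on $(B_1^n)^c\cap(B_2^n)^c$, and consequently $\mathbb{P}\big(|\Upsilon_n^{(4)}(A\times I)|\neq|\tilde\xi_n(I)|\big)\le\mathbb{P}(B_1^n)+\mathbb{P}(B_2^n)$.

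It then remains to bound the two probabilities by first moments. Writing $\rho_k^{(n)}$ for the $k$-th correlation function of GSE, Markov's inequality gives $\mathbb{P}(B_1^n)\le\int_{I'}\int\int\rho_3^{(n)}(x,y,z)\,\mathbbm{1}[|x-y|,|y-z|\le Cn^{-6/5}]\,dz\,dy\,dx$. The analytic input is a uniform bound on compact subsets of the bulk of the form $\rho_3^{(n)}(x,y,z)\lesssim n^3\prod_{\{i,j\}}\psi\big(n(x_i-x_j)\big)$ with $\psi(t):=\min(|t|^4,1)$, encoding the $\beta=4$ level repulsion; rescaling by $1/n$ and integrating $\prod_{i<j}|u_i-u_j|^4$ over $|u_i-u_j|\lesssim n^{-1/5}$ yields $\mathbb{P}(B_1^n)\lesssim n\cdot(n^{-1/5})^{14}=n^{-9/5}\to0$. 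For $\mathbb{P}(B_2^n)$, the two small-gap pairs are disjoint (an overlap would again create an $O(n^{-6/5})$-triple), so their left endpoints $x<y$ satisfy $y-x\in(Cn^{-6/5},\log^{-1}n)$ and the expected number of such configurations is at most $\int_{I''}\int_{A_n}\int_{Cn^{-6/5}<y-x<\log^{-1}n}\int_{A_n}\rho_4^{(n)}(x,x+s,y,y+t)\,dt\,dy\,ds\,dx$ over a slightly enlarged interval $I''$. Here one uses the bound $\rho_4^{(n)}(x,x+s,y,y+t)\lesssim n^4\,\psi(ns)\psi(nt)$ valid whenever $|x-y|$ is macroscopic on the scale $1/n$, which besides the within-pair repulsion uses the decay of correlations of GSE that makes two well-separated pairs asymptotically decouple; integrating $\psi(ns)=(ns)^4$ over $s\in A_n$ produces a factor $\asymp n^4(n^{-6/5})^5=n^{-2}$ (and likewise in $t$), the $y$-integral contributes $\asymp\log^{-1}n$, and the $x$-integral $\asymp1$, so $\mathbb{P}(B_2^n)\lesssim n^4\cdot n^{-2}\cdot n^{-2}\cdot\log^{-1}n=\log^{-1}n\to0$.

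The deterministic reduction and the scaling bookkeeping above are routine; the substance, and the main obstacle, is establishing the uniform bounds on $\rho_3^{(n)}$ and $\rho_4^{(n)}$ on compact subsets of the bulk, with both the correct near-diagonal vanishing and the correct decoupling of far-apart clusters. Since GSE is a Pfaffian point process, $\rho_k^{(n)}$ is the Pfaffian of a $2k\times2k$ antisymmetric matrix assembled from a $2\times2$ matrix kernel whose entries are built from the Hermite kernel together with its primitive and its derivative; the argument requires expanding this Pfaffian over perfect matchings and bounding each summand by the corresponding product of pairwise factors $\psi$, which in turn demands quantitative control of the off-diagonal blocks of the kernel and of their Taylor expansions near the diagonal. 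This is exactly the combinatorial analysis of the Pfaffian structure announced in the introduction; the identical scheme, with $\psi(t)\asymp\min(|t|,1)$ in place of $\min(|t|^4,1)$, yields the parallel estimates for GOE.
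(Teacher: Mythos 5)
Your deterministic reduction and first-moment structure match the paper's strategy: the events $B_1^n,B_2^n$ are precisely the two families of bad configurations forming the set $G_n$ in Lemma~\ref{No successive small gaps}, and bounding $\mathbb{P}(B_1^n)+\mathbb{P}(B_2^n)$ via first moments is exactly how the paper reduces Lemma~\ref{lemt} to the $k=1$ case of the error-term estimates \eqref{1error} and \eqref{2error}. Your scaling bookkeeping for the two integrals (yielding $n^{-9/5}$ and $\log^{-1}n$) is also correct under the bounds you assume.

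The gap is in how you discharge the analytic substance. You assert pointwise bounds $\rho_3^{(n)}(x,y,z)\lesssim n^3\prod_{\{i,j\}}\psi(n(x_i-x_j))$ and $\rho_4^{(n)}(x,x+s,y,y+t)\lesssim n^4\psi(ns)\psi(nt)$ with $\psi(t)=\min(|t|^4,1)$, and you justify the latter by decay of correlations between ``well-separated'' pairs. But in the regime you must handle, $|x-y|$ can be as small as $Cn^{-6/5}$, so $n|x-y|\asymp n^{-1/5}\to 0$: the two pairs are \emph{sub}-microscopically close, not well separated, and the decoupling heuristic does not apply. More fundamentally, the paper explicitly notes that the Hadamard--Fischer inequality, which makes such pointwise product bounds tractable for the determinantal GUE, fails here because the Pfaffian kernel matrix is anti-symmetric, not positive symmetric; cross-pair matchings in the Pfaffian expansion do not individually carry the full $\prod\psi$ factor, so a clean pointwise domination of $\rho_k$ by a product of pairwise $\psi$'s is neither proved in the paper nor obvious. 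The paper instead avoids pointwise bounds entirely: it performs the two-round congruent transformation of Lemma~\ref{lem:round1}, splits the Pfaffian into on-diagonal and off-diagonal matchings, and bounds the \emph{integrals} of the off-diagonal contributions by the combinatorial exponent-counting in Lemma~\ref{lem:ord1bd} and the $\ord_2,\ord_3$ inequalities of Section~\ref{sec:err}, using the matrices $\U_{4k}$ and the function $\kappa(a,b)$ of Table~\ref{table:kab}. So you have located the difficulty correctly, but the statement you invoke is stronger than and different in kind from what the paper actually establishes; to make your route rigorous you would have to prove those pointwise bounds, or else replace them with the paper's integral estimates.
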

	Next, we prove that  $\tilde{\xi}_n$ converges to a Poisson point process by the moment method. %Let $\Tilde{\rho}_{k}$ be the correlation function of $\tilde{\xi}_n$, then we have		
	\begin{lem} \label{lemf}For any fixed positive integer $k$, one has the following convergence of the $k$-th factorial moment
		\begin{align}\label{factorial_conv}
			\begin{aligned}			\lim_{n\to\infty}\mathbb{E}\left(\frac{|\tilde{\xi}_{n}(I)| !}{\left(|\tilde{\xi}_n(I)|-k\right) !}\right) =\left(\dfrac{1}{540\pi^2}\int_A u^{4}\mathrm{d}u\right)^k\left(\int_{I^{k}} \left( 2\pi\rho_{\semi}(\lambda)\right)^{6} \mathrm{d} \lambda\right)^k.
			\end{aligned}
		\end{align}
	\end{lem}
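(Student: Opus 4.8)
The plan is to prove Lemma~\ref{lemf} by the method of factorial moments combined with the Pfaffian structure of GSE, reducing the statement to the short-distance behaviour of the two-point correlation function in the bulk. Set $M_k(n):=\E\big(|\tilde\xi_n(I)|!/(|\tilde\xi_n(I)|-k)!\big)$. From the definition of $\tilde\xi_n$,
$$
M_k(n)=\E\sum_{\substack{i_1,\dots,i_k\\ \text{distinct}}}\ \prod_{\ell=1}^k\mathbbm{1}\big[\lambda_{i_\ell}\in I\big]\,\mathbbm{1}\big[|\xi_n(\lambda_{i_\ell}+A_n)|=1\big]\,\mathbbm{1}\big[\lambda_{i_\ell}\ \text{is isolated}\big].
$$
The first step is a reduction: I would show that replacing $\{|\xi_n(\lambda_{i_\ell}+A_n)|=1\}$ by $\{|\xi_n(\lambda_{i_\ell}+A_n)|\ge 1\}$ and dropping the isolation conditions changes $M_k(n)$ by $o(1)$. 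The former over-counts configurations with three eigenvalues in a window of length $O(n^{-6/5})$, the latter configurations with two distinct small gaps at distance $\le\log^{-1}n$; both contributions are estimated by integrating crude pointwise bounds on the $(2k+1)$- and $(2k+2)$-point correlation functions over these thin sets, where the level-repulsion exponent $\beta=4$ of the collapsing coordinates produces the decay. After this reduction, and after discarding the $x$-set of measure $O(\log^{-1}n)$ on which two of the base points $x_\ell$ lie within $\log^{-1}n$ of each other and checking that index coincidences among the partners are likewise negligible, $M_k(n)$ equals, up to $o(1)$,
$$
\int_{\substack{x\in I^k\\ \min_{\ell\ne m}|x_\ell-x_m|\ge\log^{-1}n}}\ \int_{(A_n)^k}\rho^{(n)}_{2k}(x_1,x_1+y_1,\dots,x_k,x_k+y_k)\,\mathrm{d}y\,\mathrm{d}x,
$$
where $\rho^{(n)}_{2k}$ is the $2k$-point correlation function of GSE; since $A_n\subset(0,+\infty)$ fixes the order inside each pair, there is no over-counting.

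Next I would establish the asymptotic factorization of $\rho^{(n)}_{2k}$. On the domain above the $k$ pairs are mutually separated by at least $\log^{-1}n$, which is still $\gg n^{-1}$; hence, after centering around each bulk point $x_\ell$ on the microscopic scale $1/(n\rho_{\semi}(x_\ell))$, the $2k$-point Pfaffian built from the $2\times2$ matrix kernel of GSE splits into the product of its $k$ diagonal $2\times2$ blocks,
$$
\rho^{(n)}_{2k}(x_1,x_1+y_1,\dots,x_k,x_k+y_k)=\prod_{\ell=1}^k\rho^{(n)}_2(x_\ell,x_\ell+y_\ell)\,\big(1+o(1)\big),
$$
uniformly over the indicated range of the $x_\ell$ and over $y_\ell\in A_n$. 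This is where the Pfaffian structure is used decisively: one expands the Pfaffian over pair partitions and shows that any term using a kernel entry evaluated at two arguments a distance $\ge\log^{-1}n$ apart is negligible next to the ``diagonal'' partition — exactly the kind of combinatorial bound advertised in the introduction, playing the role that clustering of the sine kernel plays in the GUE argument.

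It remains to pin down the short-distance behaviour of the single two-point function: I claim that, uniformly for $x\in I$ and $0<y\le Cn^{-6/5}$,
$$
\rho^{(n)}_2(x,x+y)=b_4\,\big(n\rho_{\semi}(x)\big)^{6}\,y^{4}\,\big(1+o(1)\big),\qquad b_4=\frac{(2\pi)^6}{540\pi^2}.
$$
This is obtained by inserting the explicit GSE correlation kernel (built from Hermite functions, their derivatives, and the associated error-function term), rescaling around the bulk point $x$, and Taylor-expanding at coinciding arguments: the power $y^{4}$ is the $\beta=4$ level repulsion and $b_4$ is read off from the leading Taylor coefficients. Substituting this together with the factorization into the integral, changing variables $y_\ell=n^{-6/5}u_\ell$, and using $\int_{(A_n)^k}\prod_{\ell}y_\ell^{4}\,\mathrm{d}y=n^{-6k}\big(\int_A u^{4}\,\mathrm{d}u\big)^k$, the factor $n^{6k}$ from $\prod_\ell(n\rho_{\semi}(x_\ell))^{6}$ cancels, the separation constraint disappears in the limit, and one is left with
$$
\Big(b_4\!\int_A u^{4}\,\mathrm{d}u\Big)^{\!k}\Big(\int_I\rho_{\semi}(x)^{6}\,\mathrm{d}x\Big)^{\!k}=\Big(\frac{1}{540\pi^2}\!\int_A u^{4}\,\mathrm{d}u\Big)^{\!k}\Big(\int_I\big(2\pi\rho_{\semi}(x)\big)^{6}\,\mathrm{d}x\Big)^{\!k},
$$
using $b_4(2\pi)^{-6}=1/(540\pi^2)$, which is the right-hand side of \eqref{factorial_conv}.

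The main obstacle is the uniform short-distance expansion of the GSE kernel together with the accompanying clustering bound. In contrast to the determinantal (GUE) case, the Pfaffian of the $2\times2$-block kernel has to be analyzed entry by entry; the subleading terms must be bounded combinatorially, and these bounds must be uniform both in the bulk location $x$ over the compact set $\bar I$ and in the gap size $y$ over a range that shrinks like $n^{-6/5}$, far below the microscopic scale $n^{-1}$. Controlling the error terms in this doubly degenerate regime — a collapsing pair of eigenvalues inside an already microscopic window — is the technical heart of the argument.
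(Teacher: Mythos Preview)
Your outline matches the paper's closely: the sandwich $L_{k,n}-E_{1,k,n}-E_{2,k,n}\le\tilde\rho_k\le L_{k,n}$ on $\Omega_k$ is exactly \eqref{upper}--\eqref{lower}, your ``asymptotic factorization'' of $\rho_{2k}$ into $\prod_\ell\rho_2$ is the paper's decomposition $\pfa\M_{4k}=\pfaD\M_{4k}+\pfaO\M_{4k}$, and your short-distance expansion of $\rho_2$ is Lemma~\ref{lem:diag pfa} with $k=1$ (your constant $b_4=16\pi^4/135=(2\pi)^6/(540\pi^2)$ is correct).

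The genuine gap is in the mechanism for the off-diagonal bound. Your heuristic---that a Pfaffian term involving a kernel entry at separation $\ge\log^{-1}n$ is negligible, ``playing the role that clustering of the sine kernel plays in the GUE argument''---does not transfer to GSE. The entries of the raw $2\times2$ matrix kernel have wildly mismatched growth orders: inside a collapsing diagonal block one has $V_n(\lambda_i,x_i)=O(n^{9/5})$ while $J_n(\lambda_i,x_i)=O(\log n)$, and across separated blocks $J_n(\lambda_i,\lambda_j)=O(\log n)$ is not small at all. A direct entrywise bound then fails: already for $k=2$, the off-diagonal matching $\{(1,5),(2,6),(3,4),(7,8)\}$ is bounded by $J_n(\lambda_1,\lambda_2)\,V_n(\lambda_1,\lambda_2)\,S_n(x_1,x_1)\,S_n(x_2,x_2)=O(n^3\log^2 n)$, which swamps the diagonal contribution $\pfaD\M_8=O(n^{12/5})$. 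What the paper actually does (Section~\ref{subsub:main}) is apply two rounds of congruence transformations to $\M_{4k}$ that leave the Pfaffian invariant: the first subtracts adjacent rows/columns to exploit the smoothness of the kernel across the collapsing pair $(\lambda_i,x_i)$, the second rescales odd and even rows/columns by reciprocal powers of $n^{6/5}$ to rebalance the entry orders (Lemma~\ref{lem:round1}). Only after this rebalancing does a combinatorial parity argument on the block structure of the matching (Lemma~\ref{lem:ord1bd}) give $\ord_1<6k/5$ for every off-diagonal $\sigma$. The same machinery---not ``crude pointwise bounds''---is then reused for $E_{1,k,n}$ and $E_{2,k,n}$ in Section~\ref{sec:err}; level repulsion alone does not suffice there either.
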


	We now provide a brief overview of the key steps of the proof. The starting point is the Pfaffian structure of GSE, where the correlation function is the Pfaffian of an anti-symmetric matrix (see \eqref{anticor} and \eqref{kn4exp}). This matrix can be expressed in terms of a scalar kernel $S_n$, along with its derivative and antiderivative (see \eqref{def of kernel 4}). 
	
	There are several significant challenges in analyzing the correlation functions of Pfaffian processes compared to determinantal processes. For instance, because the kernel matrices for Pfaffian processes are anti-symmetric, the Hadamard-Fischer inequality for positive symmetric matrices, as used in \cite{BB, So}, can no longer be applied to estimate their correlation functions. Additionally, the growth orders of different entries in the anti-symmetric matrix are mixed and interwoven (see Lemma \ref{order of multi kernel} below).
	
	In Lemma \ref{lem:diag pfa}, our first observation is that the integral of the product of the Pfaffians of the on-diagonal $4\times 4$ blocks provides the leading order term for the factorial moments. Therefore, one of the main tasks is to demonstrate that the contributions from the off-diagonal $4\times 4$ blocks are negligible in the limit. %For determinantal processes, the off-diagonal entries or blocks are usually negligible due to the rapid decay of kernels, and thus the Hadamard-Fischer inequality for positive symmetric matrices can be applied to estimate correlation functions. 
	To address this, in Section \ref{subsub:main}, we will first perform two rounds of transformations on the anti-symmetric kernel matrix with its Pfaffian unchanged, so that the  orders of different entries become more balanced (e.g., Lemma \ref{lem:round1}).  Then, the negligibility of the contribution from  off-diagonal blocks (e.g., Lemma \ref{lem:bdoffdiag}) follows from the combinatorial counting arguments (e.g., Lemma \ref{lem:ord1bd}). In Section \ref{sec:err}, the similar arguments are applied to prove the negligibility of two error terms ($E_{1,k,n}$ and $E_{2,k,n}$ in \eqref{lower}) appearing when estimating the correlation functions. This approach  provides an alternative proof for  GOE, which is sketched in Section \ref{sec:pfgoe}.  We anticipate that these computations, particularly the two rounds of transformations and combinatorial counting arguments, can be applied to study the intricate structures of other Pfaffian point processes.%having a similar structures to GOE and GSE, where the kernel matrix  is given  by a scalar kernel along with its derivative and antiderivative. 
	
	%To the best of our knowledge, this is the first treatment on the Pfaffian of the anti-symmetric kernel matrix to study the local statistics of a Pfaffian point process beyond the average scale. 
	%	The paper is organized as follows. In Section \ref{sec:preli}, we will provide some preliminary knowledge, e.g., correlation functions and Hermite polynomials. In Section \ref{sec:kernel}, we will derive several uniform estimates regarding the kernel of GSE.   In Section \ref{sec:pfoutline}, we outline the main steps to prove Lemma \ref{lemt} and Lemma \ref{lemf}, and the detailed proofs are presented in Section \ref{subsec:main} and Section \ref{sec:err}.  In  Section \ref{sec:pfgoe},  we sketch the proof for GOE. 
	
	\bigskip 
	
	\emph{Notation.} In this paper,  we use $C$ to denote some uniform constant whose specific value may vary from line to line. For two sequences of  real numbers  $x_n$ and $y_n$, we write $x_n \lesssim y_n$  or $x_n=O(y_n)$ if $\abs{x_n}\leq C \abs{y_n}$ for some $C>0$. We write $x_n \sim y_n$ if $\lim_{n\to\infty}x_n/y_n=1$. Given a matrix $A$, we denote $A(i,j)$ as its $(i, j)$-entry.  For two sequences of matrices $M_n$ and $M_n'$ of the same (fixed) size, we write $M_n \lesssim  M'$ if $\abs{M_n(i,j)}\lesssim  \abs{M'_n(i,j)}$ for every $(i,j)$-entry.  Given a set $S$, $|S|$ denotes its cardinality. 
	
	\section{Preliminaries}\label{sec:preli}
	\subsection{Correlation functions}
	First, we review some basic concepts
	regarding the factorial moments and the correlation functions of a point process.
	Let 
	\begin{align*}
		\xi = \sum_{i}\delta_{X_i}
	\end{align*}
	be a simple point process on $\mathbb{R}$. For any $k\geq 1$, we can construct a new point process $\xi_k$ in $\mathbb{R}^k$ via
	$$
	\xi_{k} = \sum_{X_{i_1},\ldots,X_{i_k} \textrm{ pairwise distinct}}\delta_{(X_{i_1},\cdots, X_{i_k})}.
	$$
	The $k$-{point correlation function}  of $\xi$
	is a function $\rho_k$ on $\mathbb{R}^k$ such that for any bounded Borel sets $B_1,\ldots, B_k$, it holds that
	$$
	\E|\xi_k\left( B_1\times \cdots\times B_k\right)|=\int_{B_1\times \cdots\times B_k}\rho_k(x_1,...,x_k)\mathrm{d}x_1\cdots\mathrm{d}x_k.
	\label{definition of correlation function}
	$$
	In particular, for any bounded Borel set $B\subset \mathbb{R}$, one has 
	$$
	\mathbb{E}\left(\frac{|\xi(B)| !}{(|\xi(B)|-k) !}\right) =\int_{B^k}\rho_k(x_1,...,x_k)\mathrm{d}x_1\cdots\mathrm{d}x_k, 
	\label{property of product measure}
	$$
	which is  the $k$-th {factorial moment} of the cardinality $|\xi(B)|$.

	\subsection{Pfaffian}
	Now we recall the definition of the Pfaffian of an anti-symmetric matrix. Let  $\M=\left(\M(i,j)\right)_{1\leq i, j\leq 2 N}$  be a $2N\times 2N$ anti-symmetric matrix, where $N$ is a positive integer. Then the Pfaffian of $\M$ is defined by 
	$$
	\pfa \M := \sum_{\sigma\in \mathcal{P}_{2N}}\mathrm{sgn}(\sigma)\M(\sigma_1,\sigma_2)\M(\sigma_3,\sigma_4)\cdots \M(\sigma_{2N-1},\sigma_{2N}),
	\label{definition of pfaffian}
	$$
	where $\mathcal{P}_{2N}$  consists of all permutations $\sigma$  of $\{1,2,...,2N\}$ such that   
	$$
	\sigma_1<\sigma_3<\cdots<\sigma_{2N-1}, 
	\text{ and } \sigma_1<\sigma_2,\sigma_3<\sigma_4,...,\sigma_{2N-1}<\sigma_{2N}.
	\label{definition of P}
	$$
	For any $2N\times 2N$ matrix $A$, we have
	$$\label{keypfaid}
	% (\pfa X)^{2}=\det X, 
	\pfa\left(A^{T} \M A\right)=(\det A)(\pfa \M). %\pfa(\lambda X)=\lambda^{N} \pfa X .
	$$
	This identity implies that  $\pfa \M$ is invariant under congruent transformations when $A$ has unit determinant, which in particular includes the case of the
	product of elementary matrices. %We will frequently use this fact later.   
	%Here, these identities follow from the definition of Pfaffian.
	
	\subsection{Hermite polynomials}\label{sec:hermite}
	The monic orthogonal polynomials on $\mathbb{R}$ for the weight
	$$
	\mathrm{d} v(x)=w(x) \mathrm{d} x, \quad w(x)=\frac{e^{-x^{2} / 2}}{\sqrt{2 \pi}}
	$$%$\{H_{k}(x), k \geq 0\} $, 
	are called {Hermite polynomials},  which are given by
	$$
	H_{n}(x):=(-1)^{n} e^{x^{2} / 2} 
	\frac{\mathrm{d}^n }{\mathrm{d}x^n}
	\left(e^{-x^{2} / 2}\right)
	$$ for integer $n\geq0$.
	The $L^2$-normalized {Hermite wave functions}
	$$
	\label{def of wave}
	\varphi_{n}(x):=\frac{e^{-x^{2} / 4} H_{n}(x)}{(2 \pi)^{1 / 4} \sqrt{n !}}
	$$
	are orthonormal, i.e., 
	\begin{align*}
		\int_{\mathbb{R}}\varphi_{n}(x)\varphi_{m}(x)\mathrm{d}x = \delta_{n,m}.
	\end{align*}
	For any $\theta\in(0,\pi)$, we set
	
	%a(\theta) &= \sin 2\theta - 2\theta,\\
	$$b(\theta)=\theta-\frac{1}{2} \sin 2 \theta, \quad R_{n}(\theta)=\sin \left(n b(\theta)+\frac{\pi}{4}-\frac{\theta}{2}\right),$$  
	$$Q_{n}(\theta)=\frac{3}{16}(\sin \theta)^{-2} \sin \left(n b(\theta)-\frac{3}{4} \pi-\frac{5}{2} \theta\right)+\frac{5}{96}(\sin \theta)^{-3} \sin \left(n b(\theta)-\frac{5}{4} \pi-\frac{7}{2} \theta\right).$$
	Then we have the following classical Plancherel-Rotach formula (see \cite{DY}), 
	%\label{Plancherel-Rotach asymptotics}
	% 		\begin{align}
	%			e^{-x^{2}/4} H_{n}\left(2\sqrt{n+1}\cos \theta\right)
	%			=(\sin \theta)^{-{1}/{2}}(n !)^{{1}/{2}} 2^{{1}/{4}}(\pi n)^{-{1}/{4}}\left[R_{n+1}(\theta)+\frac{1}{n} Q_{n}(\theta)+O\left(n^{-2}\right)\right],
	%		\end{align}
	%		or, equivalently,
	\begin{equation}\label{Plancherel-Rotach Formula}
		\begin{split}	&\varphi_n\left(2\sqrt{n+1}\cos \theta\right) \\ &=(\pi\sin \theta)^{-{1}/{2}}n^{-{1}/{4}} 
			\left(R_{n+1}(\theta)+\frac{1}{n} \left(Q_{n+1}(\theta)-\frac{5}{24} R_{n+1}(\theta)\right)+O\left(n^{-2}\right)\right).\end{split}
	\end{equation}
	Here,  for any $\epsilon>0$, the error bound $O\left(n^{-2}\right)$  is uniform for $\theta\in(-\pi+\epsilon,\pi-\epsilon)$. The Plancherel-Rotach formula will imply the following % which will  be used in estimating the   kernel of GSE later on.
	%We defer its proof to Appendix \ref{app:hermite}.
	\begin{lem}\label{hermitintbd}
		Given any $\ep>0$, for $x\leq y$ such that $x,y\in (-2+\epsilon,2-\epsilon)$, one has uniform estimates, 
		\begin{equation}\label{phin4}
			\int_{\sqrt{n}x }^{\sqrt{n}y} \varphi_{n}(t)\mathrm{d}t=O\left(n^{-3/4}\right)
		\end{equation}
		and \begin{align}
			\int_{\sqrt{2n}x}^{+\infty}\varphi_{2n-1}(t)\mathrm{d}t = O\left(n^{-1/4}\right).
			\label{int 3 of hermite}
		\end{align}
	\end{lem}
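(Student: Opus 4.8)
The plan is to derive \eqref{phin4} from the Plancherel--Rotach formula \eqref{Plancherel-Rotach Formula} by a change of variables followed by a non-stationary phase (integration by parts) estimate, and then to obtain \eqref{int 3 of hermite} from \eqref{phin4} together with a separate, elementary estimate for $\int_0^{+\infty}\varphi_{2n-1}$ based on the Hermite recurrences.

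To prove \eqref{phin4}, note first that since $x,y\in(-2+\epsilon,2-\epsilon)$, the angles $\phi_x,\phi_y\in(0,\pi)$ determined by $\cos\phi_x=\frac{\sqrt n\,x}{2\sqrt{n+1}}$ and $\cos\phi_y=\frac{\sqrt n\,y}{2\sqrt{n+1}}$ satisfy $|\cos\phi_x|,|\cos\phi_y|\le 1-\epsilon/2$, whence $\phi_y\le\phi_x$ and both lie in $[\phi_\ast,\pi-\phi_\ast]$ with $\phi_\ast:=\arccos(1-\epsilon/2)>0$ depending only on $\epsilon$. Substituting $t=2\sqrt{n+1}\cos\phi$ and inserting \eqref{Plancherel-Rotach Formula}, whose error is uniform on $[\phi_\ast,\pi-\phi_\ast]$, gives
\begin{align*}
\int_{\sqrt n\,x}^{\sqrt n\,y}\varphi_n(t)\,\mathrm{d}t=\frac{2\sqrt{n+1}}{\sqrt{\pi}\,n^{1/4}}\int_{\phi_y}^{\phi_x}(\sin\phi)^{1/2}\,R_{n+1}(\phi)\,\mathrm{d}\phi+O\!\left(n^{-3/4}\right),
\end{align*}
where the $O(n^{-3/4})$ absorbs the $\tfrac1nQ_{n+1}$ and $O(n^{-2})$ contributions (bounded on $[\phi_\ast,\pi-\phi_\ast]$, then multiplied by $\tfrac{2\sqrt{n+1}}{\sqrt{\pi}\,n^{1/4}}=O(n^{1/4})$). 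With $R_{n+1}(\phi)=\sin\Psi(\phi)$, $\Psi(\phi)=(n+1)b(\phi)+\tfrac{\pi}{4}-\tfrac{\phi}{2}$, one has $\Psi'(\phi)=2(n+1)\sin^2\phi-\tfrac12\ge(n+1)\sin^2\phi_\ast$ and $|\Psi''(\phi)|\le 2(n+1)$ on $[\phi_\ast,\pi-\phi_\ast]$; one integration by parts via $\Psi'\sin\Psi=-(\cos\Psi)'$, together with $\bigl|\frac{\mathrm{d}}{\mathrm{d}\phi}\bigl(\frac{(\sin\phi)^{1/2}}{\Psi'(\phi)}\bigr)\bigr|\lesssim n^{-1}$, shows the oscillatory integral $\int_{\phi_y}^{\phi_x}(\sin\phi)^{1/2}R_{n+1}(\phi)\,\mathrm{d}\phi$ is $O(n^{-1})$, hence $\int_{\sqrt n x}^{\sqrt n y}\varphi_n=O(n^{-3/4})$. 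The same argument holds with $n$ replaced by any large index, in particular by $2n-1$.

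To prove \eqref{int 3 of hermite}, decompose $\int_{\sqrt{2n}\,x}^{+\infty}\varphi_{2n-1}=\int_{0}^{+\infty}\varphi_{2n-1}-\int_{0}^{\sqrt{2n}\,x}\varphi_{2n-1}$. The second term is $O(n^{-3/4})$ by \eqref{phin4} for $\varphi_{2n-1}$, since for large $n$ its endpoints $0$ and $x\sqrt{2n/(2n-1)}$ lie in $(-2+\epsilon/2,2-\epsilon/2)$. For the first term I would use the Hermite recurrences $H_m'=mH_{m-1}$, $xH_m=H_{m+1}+mH_{m-1}$, which give $\varphi_m'=\tfrac12\bigl(\sqrt m\,\varphi_{m-1}-\sqrt{m+1}\,\varphi_{m+1}\bigr)$; integrating this with $m=2j$ over $[0,+\infty)$ and using $\varphi_{2j}(+\infty)=0$ and $\varphi_{2j}(0)=(2\pi)^{-1/4}(-1)^j\frac{\sqrt{(2j)!}}{2^jj!}$ (from $H_{2j}(0)=(-1)^j(2j-1)!!$) yields, with $I_k:=\int_0^{+\infty}\varphi_k$,
\begin{align*}
\sqrt{2j+1}\,I_{2j+1}-\sqrt{2j}\,I_{2j-1}=2\varphi_{2j}(0),\qquad j\ge 1 .
\end{align*}
Solving this first-order recurrence with the integrating factor $C_j:=\prod_{k=1}^{j-1}\tfrac{\sqrt{2k}}{\sqrt{2k+1}}=\tfrac{2^{j-1}(j-1)!}{\sqrt{(2j-1)!}}$, which satisfies $\sqrt{2j+1}\,C_{j+1}=\sqrt{2j}\,C_j$, and simplifying $\tfrac{2\varphi_{2j}(0)}{\sqrt{2j}\,C_j}=2(2\pi)^{-1/4}(-1)^j4^{-j}\binom{2j}{j}$, one gets
\begin{align*}
\int_0^{+\infty}\varphi_{2n-1}=I_{2n-1}=C_n\left(I_1+2(2\pi)^{-1/4}\sum_{j=1}^{n-1}(-1)^j4^{-j}\binom{2j}{j}\right),\qquad I_1=2(2\pi)^{-1/4}.
\end{align*}
Since $4^{-j}\binom{2j}{j}$ decreases to $0$, the alternating sum is bounded (by $\tfrac12$) uniformly in $n$; and $C_n^4=\prod_{k=1}^{n-1}\bigl(\tfrac{2k}{2k+1}\bigr)^2<\prod_{k=1}^{n-1}\tfrac{2k}{2k+1}\cdot\tfrac{2k+1}{2k+2}=\prod_{k=1}^{n-1}\tfrac{k}{k+1}=\tfrac1n$, so $C_n<n^{-1/4}$. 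Hence $\int_0^{+\infty}\varphi_{2n-1}=O(n^{-1/4})$, and combining with the bound on the second term gives $\int_{\sqrt{2n}\,x}^{+\infty}\varphi_{2n-1}=O(n^{-1/4})$, uniformly in $x\in(-2+\epsilon,2-\epsilon)$.

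I expect the main obstacle to be the term $\int_0^{+\infty}\varphi_{2n-1}$. Its range of integration reaches the spectral edge $\sim 2\sqrt{2n}$, where \eqref{Plancherel-Rotach Formula} is no longer valid (there the $\tfrac1nQ_{n+1}$ correction, which carries factors $(\sin\theta)^{-2}$ and $(\sin\theta)^{-3}$, is of the same size as the leading term $R_{n+1}$), while a crude bound via $\sup_t|\varphi_{2n-1}(t)|\asymp n^{-1/12}$ over the edge window overshoots $n^{-1/4}$ because it discards the oscillatory cancellation. The recurrence-plus-integrating-factor computation sidesteps the edge entirely, encoding the needed cancellation in the conditional convergence of the alternating central-binomial series $\sum_j(-1)^j4^{-j}\binom{2j}{j}$.
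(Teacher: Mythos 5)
Your proof is correct. The argument for \eqref{phin4} is essentially the same as the paper's: change variables via $t=2\sqrt{n+1}\cos\theta$, insert the Plancherel--Rotach expansion, and kill the oscillatory integral by one integration by parts using the lower bound on $b'$; the paper phrases this as writing $R_{n+1}(\theta)(\sin\theta)^{1/2}$ in the form $g_1\sin((n+1)b)+g_2\cos((n+1)b)$ with $C^1$ amplitudes and $b'\ge c>0$, which is the same non-stationary-phase mechanism you carry out explicitly with $\Psi$.

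For \eqref{int 3 of hermite} both you and the paper split $\int_{\sqrt{2n}\,x}^{\infty}\varphi_{2n-1}=\int_0^{\infty}\varphi_{2n-1}-\int_0^{\sqrt{2n}\,x}\varphi_{2n-1}$ and apply \eqref{phin4} to the second piece, but you treat $\int_0^{\infty}\varphi_{2n-1}$ by a genuinely different route. The paper invokes the closed-form identities from Gradshteyn--Ryzhik (7.376), namely $\int_0^\infty e^{-x^2/4}H_{2n}(x)\,\mathrm{d}x=(2n-1)!!\sqrt{\pi}$ and the analogous bound for odd index, and then applies Stirling's formula to conclude $\int_0^\infty\varphi_{2n-1}\lesssim (2n-2)!!/\sqrt{(2n-1)!}\lesssim n^{-1/4}$. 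You instead derive the first-order recurrence $\sqrt{2j+1}\,I_{2j+1}-\sqrt{2j}\,I_{2j-1}=2\varphi_{2j}(0)$ directly from $\varphi_m'=\tfrac12(\sqrt{m}\varphi_{m-1}-\sqrt{m+1}\varphi_{m+1})$, solve it with the integrating factor $C_j$, recognize the forcing terms as the alternating central-binomial series (hence uniformly bounded), and bound $C_n<n^{-1/4}$ by the telescoping product $C_n^4<\prod(k/(k+1))=1/n$. What your route buys is self-containedness: it avoids reaching for a table identity and replaces Stirling by the elementary product bound. The paper's route is shorter to state given the references but relies on the external identity. Both are valid, and both correctly sidestep the failure of the Plancherel--Rotach expansion near the spectral edge (the obstacle you identify), the one via explicit integrals and the other via a recurrence that never touches the edge.

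One minor computational check worth flagging, though it does not affect the result: your claim $\Psi'(\phi)\ge(n+1)\sin^2\phi_\ast$ holds once $(n+1)\sin^2\phi_\ast\ge \tfrac12$, i.e., for $n$ large; for smaller $n$ the estimate is trivial anyway since the quantity $\int\varphi_n$ is bounded. This is fine as stated since the lemma is asymptotic.
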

	
	\begin{proof} 
		%We first consider \eqref{phin4}. %We only prove the first equation in \eqref{phin4} since the second one is similar. 
		Let $0<\theta_1<\theta_2<\pi$ be such that
		$
		2\sqrt{n+1}\cos\theta_1=\sqrt{n}y,\,\, 
		2\sqrt{n+1}\cos\theta_2=\sqrt{n}x.
		$
		Since $(x,y)\in (-2+\ep,2-\ep)$, $\theta_1$ and $\theta_2$ are also bounded away from 0 and $\pi$ for large $n$. 
		By a change of variable $t=2\sqrt{n+1}\cos \theta$ and
		using \eqref{Plancherel-Rotach Formula}, we have
		\begin{equation}\label{phin1}
			\begin{split}
				\int_{\sqrt{n}x}^{\sqrt{n}y} \varphi_{n}(t)\mathrm{d}t&=2\sqrt{n+1}
				\int_{\theta_1}^{\theta_2} \varphi_{n}\left(2\sqrt{n+1}\cos \theta\right)\sin\theta \mathrm{d}\theta\\
				&=2\sqrt{n+1}
				\pi^{-1/2} n^{-1/4}
				\int_{\theta_1}^{\theta_2} R_{n+1}(\theta)(\sin\theta)^{1/2}\mathrm{d}\theta +O\left(n^{-3/4}\right).
			\end{split}
		\end{equation}
		The integrand $R_{n+1}(\theta)(\sin\theta)^{1/2}$ can be written as $g_1(\theta)\sin \left((n+1)b(\theta)\right)+g_2(\theta)\cos((n+1)b(\theta))$ for some $C^1(0,\pi)$ functions $g_1$ and $g_2$, where $b(\theta)=\theta-\sin(2\theta)/2$ belongs to $C^1(0,\pi)$ and satisfies
		$
		b'(\theta)=1-\cos (2\theta)>c>0$ on $(\theta_1,\theta_2).
		$
		Integration by parts will simply imply	\begin{equation}\label{phin2}
			\abs{  \int_{\theta_1}^{\theta_2} R_{n+1}(\theta)(\sin\theta)^{1/2}\mathrm{d}\theta }\lesssim n^{-1}.
		\end{equation}
		This will prove  \eqref{phin4} by  \eqref{phin1} and \eqref{phin2}. 
		%	we get
		%	\begin{equation}\label{phin3}
		%		\abs{    \int_{\sqrt{n}x}^{\sqrt{n}y} \varphi_{n}(t)\mathrm{d}t}\lesssim n^{-3/4}.
		%	\end{equation}
		By identities (7.376)  in  \cite{GR}, we can derive 
		\begin{align}
			\int_{0}^{+\infty}e^{-x^2/4} H_{2n}(x) \mathrm{d}x &= (2n-1)!!\sqrt{\pi},\quad	\int_{0}^{+\infty}e^{-x^2/4} H_{2n+1}(x) \mathrm{d}x \lesssim (2n)!!\sqrt{\pi}.
			\label{int 2 of hermite}
		\end{align} By Stirling's formula,  the integral of the Hermite wave functions satisfies 
		\begin{equation}\label{phin5}
			\begin{split}
				\int_{0}^{+\infty}\varphi_{2n-1}(t) \mathrm{d}t \lesssim &  \dfrac{(2n-2)!!}{\sqrt{(2n-1)!}} 
				\lesssim  \dfrac{\sqrt{4(n-1)}\left(\frac{2n-2}{e}\right)^{n-1}}{\left(2\pi(2n-1)\right)^{1/4}\left(\frac{2n-1}{e}\right)^{n-1/2}} \lesssim
				n^{-1/4}.
			\end{split} 
		\end{equation} By the decomposition 
		$\int_{\sqrt{2n}x}^{+\infty}\varphi_{2n-1}(t)\mathrm{d}t=\int_{0}^{+\infty}\cdots-\int_{0}^{\sqrt{2n}x}\cdots$, 
		\eqref{int 3 of hermite} follows directly from  \eqref{phin4} and \eqref{phin5}.		
	\end{proof}

	\section{Kernel estimates of  GSE}\label{sec:kernel}
	GSE is a Gaussian measure on the space of Hermitian self-dual matrices \cite{M}. The eigenvalues of GSE are real, and the joint density is given by \eqref{jpdf of Gauss ensemble} with $\beta=4$. The eigenvalues of GSE form a Pfaffian point process. Specifically,  for any integer $k\geq 1$, the $k$-point correlation function   can be expressed as the Pfaffian of a $2k\times 2k$ anti-symmetric matrix  (see Section 3.9 in \cite{AGZ})	\begin{equation}\label{anticor}
		\rho_{k}(\lambda_1,...,\lambda_k) = \pfa\left(JK_n(\lambda_i,\lambda_j)\right)_{1\leq i,j\leq k},
	\end{equation}
	where 
	\begin{equation*}
		J = \begin{bmatrix}
			0  & 1 \\
			-1 & 0
		\end{bmatrix},
	\end{equation*}
	and $K_n(x,y)$ is  the kernel given by
	\begin{equation}\label{kn4exp}
		K_n(x,y) =
		\begin{bmatrix}
			S_n(x,y)  & 
			V_n(x,y) \\ J_{n}(x, y) & S_n(y,x)
		\end{bmatrix}.
	\end{equation}
	Here, 
	\begin{align} 
		\begin{aligned}
			S_n(x, y)
			&=\frac{\sqrt{2n}}{2}
			\sum_{j=0}^{n-1} \varphi_{j}\left(\sqrt{2n}x\right) \varphi_{j}\left(\sqrt{2n}y\right)-\frac{n}{2}\varphi_{2n}(\sqrt{2n}x)\int_{\sqrt{2n}y}^{\infty} \varphi_{2n-1}(t)\mathrm{d}t,\\
			V_{n}(x, y) & =-{\partial_y} S_{n}^{}(x, y), \\
			J_{n}(x, y) & = \int_y^{x} S_n(t,y)\mathrm{d}t.
		\end{aligned}
		\label{def of kernel 4}
	\end{align}
	The $2\times 2$ matrix $JK_n(x,y)$ is anti-symmetric where
	\begin{equation}
		\label{relation of 2 S(x,y)}\begin{aligned}
			V_n(x,y) = - V_n(y,x), \quad 
			\int_{y}^{x}S_n(t,y)\mathrm{d}t = \int_{y}^{x}S_n(t,x)\mathrm{d}t. \end{aligned}
	\end{equation}
	%	The proofs of 	\eqref{relation of 2 S(x,y)} and \eqref{relation of 2 integral} follow from recurrence relations of Hermite polynomials. Alternatively, they can be seen from  	%\subsection{Convergence of the rescaled kernel}
	For any given $x_0 \in (-2,2)$, we define the following rescaled GSE kernel 
	\begin{equation}\label{rescaledkernel}
		\begin{aligned}	&\widehat{K}_n(x, y):=\\& \frac{1}{n\rho_{\semi}(x_0)}\begin{bmatrix}
				\frac{1}{\sqrt{n \rho_{\semi}(x_0)}} & 0 \\
				& \sqrt{n \rho_{\semi}(x_0)}
			\end{bmatrix} K_{n}(x, y)\begin{bmatrix}
				\sqrt{n \rho_{\semi}(x_0)} & 0 \\
				0 & \frac{1}{\sqrt{n \rho_{\semi}(x_0)}}
			\end{bmatrix} . \end{aligned}
	\end{equation}
	We further denote it by 
	\begin{align*}
		\widehat{K}_n(x, y):=	\begin{bmatrix}
			\widehat{S}_{n}(x, y) & \widehat{V}_n(x, y)\\
			\widehat{J}_n(x, y) & \widehat{S}_n(y,x)
		\end{bmatrix}.
	\end{align*}
	Let $K_{\sin}$   be  the {sine kernel}
	\begin{equation}\label{sinekerneldef}
		K_{\sin}(t) := \frac{\sin(\pi t)}{\pi t}. 
	\end{equation}
	We have the following uniform estimates regarding the rescaled GSE kernel. 
	\begin{lem}
		\label{Convergence of the rescaling kernel}
		For any $\ep>0$, uniformly over $x_0 \in (-2+\ep,2-\ep)$ and $x,y\in [-1,1]$,    
		\begin{align}
			\label{convergence of S}
			&\left\vert  \widehat{S}_{n}\left(x_0+\frac{x}{n \rho_{\semi}(x_0)}, x_0+\frac{y}{n \rho_{\semi}(x_0)}\right)-K_{\sin}(2(x-y)) \right\vert\lesssim\frac{1}{\sqrt{n}}, \\
			\label{convergence of I}
			&\left|\widehat{J}_{n}\left(x_0+\frac{x}{n \rho_{\semi}(x_0)}, x_0+\frac{y}{n \rho_{\semi}(x_0)}\right)-\int_{0}^{x-y} K_{\sin}(2 t) \mathrm{d} t\right|\lesssim\frac{\abs{x-y}}{\sqrt{n}},\\
			\label{convergence of D}
			& \left| \widehat{V}_{n}\left(x_0+\frac{x}{n \rho_{\semi}(x_0)}, x_0+\frac{y}{n \rho_{\semi}(x_0)}\right)-{\partial_x} [K_{\sin}(2(x-y))] \right|\lesssim\frac{1}{n}.
		\end{align}

	\end{lem}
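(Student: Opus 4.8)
The proof reduces everything to the scalar kernel $S_n$ and the Plancherel--Rotach asymptotics \eqref{Plancherel-Rotach Formula}. Set $X=x_0+\tfrac{x}{n\rho_{\semi}(x_0)}$, $Y=x_0+\tfrac{y}{n\rho_{\semi}(x_0)}$ throughout, and $g_n(x,y):=\widehat S_n(X,Y)$. From \eqref{rescaledkernel} one reads off $\widehat S_n(X,Y)=\tfrac{1}{n\rho_{\semi}(x_0)}S_n(X,Y)$, $\widehat V_n(X,Y)=\tfrac{1}{(n\rho_{\semi}(x_0))^2}V_n(X,Y)$ and $\widehat J_n(X,Y)=J_n(X,Y)$; combining these with $V_n=-\partial_yS_n$, $J_n(X,Y)=\int_Y^XS_n(t,Y)\,\mathrm{d}t$ and a change of variables gives the exact identities
\[
\widehat V_n(X,Y)=-\partial_y g_n(x,y),\qquad \widehat J_n(X,Y)=\int_{y}^{x}g_n(s,y)\,\mathrm{d}s .
\]
Therefore \eqref{convergence of I} follows from \eqref{convergence of S} by integrating the error over an $s$-interval of length $|x-y|$, and \eqref{convergence of D} follows once \eqref{convergence of S} is upgraded to a $C^1$-in-$y$ estimate with the sharper rate $O(1/n)$. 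So the real content is \eqref{convergence of S} together with a derivative bound.

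For \eqref{convergence of S}, split $S_n=S_n^{(1)}+S_n^{(2)}$ as in \eqref{def of kernel 4}. The correction $S_n^{(2)}(X,Y)=-\tfrac n2\varphi_{2n}(\sqrt{2n}X)\int_{\sqrt{2n}Y}^{\infty}\varphi_{2n-1}(t)\,\mathrm{d}t$ is harmless: since $x_0\in(-2+\ep,2-\ep)$ and $x\in[-1,1]$, formula \eqref{Plancherel-Rotach Formula} gives $|\varphi_{2n}(\sqrt{2n}X)|=O(n^{-1/4})$ uniformly, and Lemma \ref{hermitintbd} (eq.\ \eqref{int 3 of hermite}) gives $|\int_{\sqrt{2n}Y}^{\infty}\varphi_{2n-1}|=O(n^{-1/4})$ uniformly, so $\tfrac{1}{n\rho_{\semi}(x_0)}|S_n^{(2)}(X,Y)|=O(n^{-1/2})$. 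For the main part, apply the Christoffel--Darboux identity to the Hermite sum, writing $S_n^{(1)}(X,Y)$ as $\tfrac{\sqrt{2n}}{2}$ times the Christoffel--Darboux kernel, i.e.\ a difference $\varphi_{M}(\sqrt{2n}X)\varphi_{M-1}(\sqrt{2n}Y)-\varphi_{M-1}(\sqrt{2n}X)\varphi_{M}(\sqrt{2n}Y)$ divided by $\sqrt{2n}(X-Y)$, with $M$ the top index; for $x_0\in(-2+\ep,2-\ep)$ these arguments lie in the oscillatory regime, so \eqref{Plancherel-Rotach Formula} applies. Taylor expanding the phase $(M+1)b(\psi)$ about $x_0$ (with $\psi$ determined by $2\sqrt{M+1}\cos\psi=\sqrt{2n}X$, resp.\ $Y$), the linear coefficient equals $-2n\sin\psi_0=-2\pi n\rho_{\semi}(x_0)$ because $\sin\psi_0=\pi\rho_{\semi}(x_0)$, so the phase shift across the rescaled window is $\mp2\pi x$ (resp.\ $\mp2\pi y$); the product-to-sum identity $\sin P\sin Q-\sin P'\sin Q'=\sin(\Phi_0-\Phi_0')\sin(2\pi(x-y))$ together with $\sin(\Phi_0-\Phi_0')=\pi\rho_{\semi}(x_0)+O(1/n)$, and matching the amplitude factors $(\pi\sin\psi)^{-1/2}$ and index powers $M^{-1/4}$ against the prefactor $\tfrac{\sqrt{2n}}{2}$, the denominator $\sqrt{2n}(X-Y)$, and the $1/(n\rho_{\semi}(x_0))$ normalization, reproduce exactly $K_{\sin}(2(x-y))$. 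Every residual term — the subleading Plancherel--Rotach term $\tfrac1M(Q_{M+1}-\tfrac5{24}R_{M+1})$, the $O(M^{-2})$ remainder, the quadratic part of the phase, the variation of the amplitude, and the difference between the Christoffel--Darboux numerator and its leading form (both antisymmetric in $X,Y$, hence vanishing on $X=Y$, so division by $X-Y$ stays bounded) — contributes only $O(1/n)$, uniformly in $x_0,x,y$. With the $O(1/\sqrt n)$ from $S_n^{(2)}$ this yields \eqref{convergence of S}.

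For \eqref{convergence of D} one differentiates the previous analysis in $y$: after rescaling all oscillations have bounded frequency, so $\partial_y$ applied to the $O(1/n)$ error of $\tfrac1{n\rho_{\semi}(x_0)}S_n^{(1)}$ leaves it $O(1/n)$; and $\partial_y$ of the $O(1/\sqrt n)$ contribution of $S_n^{(2)}$ acts only on $\int_{\sqrt{2n}Y}^{\infty}\varphi_{2n-1}$, producing an extra factor $\sqrt{2n}/(n\rho_{\semi}(x_0))=O(n^{-1/2})$ from the chain rule against $\varphi_{2n-1}(\sqrt{2n}Y)=O(n^{-1/4})$, hence only $O(1/n)$. (Equivalently, the $O(1/\sqrt n)$ part of $\widehat S_n$ depends on $y$ only through $\int_{\sqrt{2n}Y}^{\infty}\varphi_{2n-1}\approx\int_0^{\infty}\varphi_{2n-1}$, which is $y$-independent to leading order.) Thus $\widehat V_n(X,Y)=-\partial_yg_n(x,y)=\partial_xK_{\sin}(2(x-y))+O(1/n)$, which is \eqref{convergence of D}.

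The step I expect to be the main obstacle is the uniform bookkeeping in the middle paragraph: the Plancherel--Rotach expansion must be controlled uniformly in $x_0$ over $(-2+\ep,2-\ep)$ and in $(x,y)$ over $[-1,1]^2$; one must check that differentiating the asymptotics (needed for \eqref{convergence of D}) costs only $O(1)$ rather than a power of $n$; and the diagonal $x=y$ has to be handled via the antisymmetry of the Christoffel--Darboux kernel so that the $1/(X-Y)$ factor never blows up. In short, proving that $\widehat K_n$ converges in $C^1_{\mathrm{loc}}$ with the stated rates is the technical heart.
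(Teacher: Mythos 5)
Your overall reduction is correct and in the same spirit as the paper's: you correctly read off from \eqref{rescaledkernel} that $\widehat S_n = S_n/(n\rho_{\semi}(x_0))$, $\widehat V_n = V_n/(n\rho_{\semi}(x_0))^2$, $\widehat J_n = J_n$, and the identities $\widehat V_n(X,Y)=-\partial_y g_n(x,y)$ and $\widehat J_n(X,Y)=\int_y^x g_n(s,y)\,ds$ are both right, so that \eqref{convergence of I} does follow from \eqref{convergence of S} by integration and \eqref{convergence of D} is a $C^1$ upgrade. Your treatment of the rank-one correction $S_n^{(2)}$ (bound by Plancherel--Rotach plus Lemma \ref{hermitintbd}, giving $O(n^{-1/2})$ for $\widehat S_n$ and $O(n^{-1})$ for $\widehat V_n$ after the extra $\sqrt{2n}/(n\rho_{\semi})$ from the chain rule) also matches the paper.

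Where you genuinely diverge is in the leading term $\tfrac12 K_{2n}^{(2)}$. The paper does not attempt the Christoffel--Darboux/Plancherel--Rotach computation at all: it simply invokes the Kriecherbauer--Schubert uniform estimates \eqref{con of K2} and \eqref{con of dev of K2}, which deliver precisely the $C^0$ and $C^1$ convergence of the rescaled GUE kernel to the sine kernel with uniform rate $O(1/n)$ over $x_0\in(-2+\ep,2-\ep)$ and $x,y\in[-1,1]$. Your middle paragraph tries to re-prove this from scratch, and as you yourself flag, this is exactly where the difficulty lies: showing that the subleading Plancherel--Rotach terms, the quadratic phase, the amplitude variation, and the cancellation of the $1/(X-Y)$ singularity near the diagonal all contribute $O(1/n)$ \emph{uniformly}, and then that this remains $O(1/n)$ after one $y$-derivative, is not a routine bookkeeping exercise -- it is essentially the content of the cited theorem. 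Your heuristic ``after rescaling all oscillations have bounded frequency so $\partial_y$ of an $O(1/n)$ error stays $O(1/n)$'' does not hold for a generic $O(1/n)$ error; it needs the specific structure of the error term, which is exactly what \eqref{con of dev of K2} packages. So the proposal is correct in outline and the reductions are sound, but the technical heart is left as a sketch where the paper instead quotes Kriecherbauer--Schubert as a black box. If you want a self-contained proof you would need to carry out the Plancherel--Rotach bookkeeping in detail (including the second-order term and the diagonal regularization); if not, you should cite the KS result in place of your middle paragraph, which is what the paper does.
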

	
	\begin{proof}
		Let $K_n^{(2)}(x,y)$ be the kernel of the GUE. It can be expressed in terms of Hermite wave functions as follows (e.g., Lemma 3.2.2 in \cite{AGZ}):
		$$
		K_n^{(2)}(x,y)=\sqrt{n}\sum_{i=0}^{n-1}\varphi_i(\sqrt{n}x)\varphi_i(\sqrt{n}y).
		$$
		One first has the uniform estimates (e.g., Theorem 1 in   \cite{KS})
		%	\begin{align}
		%		&\sup _{x, y \in\left[-1, 1\right]}\left|\frac{1}{n \rho_{\semi}(x_0)} K_{n}^{(2)}\left(x_0+\frac{x}{n \rho_{\semi}(x_0)}, x_0+\frac{y}{n \rho_{\semi}(x_0)}\right)-K_{\sin}(x-y)\right|=O\left(\frac{1}{n}\right), \label{con of K2}\\
		%		&\sup _{x, y \in\left[-1,1\right]}\left|{\partial_x}\left(\frac{1}{n \rho_{\semi}(x_0)} K_{n}^{(2)}\left(x_0+\frac{x}{n \rho_{\semi}(x_0)}, x_0+\frac{y}{n \rho_{\semi}(x_0)}\right)-K_{\sin}(x-y)\right)\right|=O\left(\frac{1}{n}\right).
		%		\label{con of dev of K2}
		%	\end{align}
		
		\begin{align}
			&\sup _{x, y \in\left[-1, 1\right]}
			\left|\frac{1}{n \rho_{\semi}(x_0)} 
			K_{n}^{(2)}\left(x_0+\frac{x}{n \rho_{\semi}(x_0)}, x_0+\frac{y}{n \rho_{\semi}(x_0)}\right)
			-K_{\sin}(x-y)\right| \notag \\
			&\lesssim \frac{1}{n}, \label{con of K2}\\
			&\sup _{x, y \in\left[-1,1\right]}
			\left|{\partial_x}\left(\frac{1}{n \rho_{\semi}(x_0)} 
			K_{n}^{(2)}\left(x_0+\frac{x}{n \rho_{\semi}(x_0)}, x_0+\frac{y}{n \rho_{\semi}(x_0)}\right)
			-K_{\sin}(x-y)\right)\right| \notag \\
			&\lesssim \frac{1}{n}.
			\label{con of dev of K2}
		\end{align}
		By \eqref{def of kernel 4}, we have the relation \begin{align}
			\label{relationship of k2 and k4}
			S_n(x, y)=\frac{K_{2 n}^{(2)}(x, y)}{2}-\frac{n}{2}\varphi_{2n}(\sqrt{2n}x)\int_{\sqrt{2n}y}^{\infty} \varphi_{2n-1}(t)\mathrm{d}t.
		\end{align}
		The estimate \eqref{convergence of S} then follows from the bound $O\left(n^{-1/4}\right)$ for $\varphi_{2n}$  (see \eqref{Plancherel-Rotach Formula}) and  the bound $n^{-1/4}$ for the integral term (see \eqref{int 3 of hermite}). 
		We also obtain \eqref{convergence of I} by integrating both sides of \eqref{convergence of S}.  	
		Taking the derivative with respect to $y$ on \eqref{relationship of k2 and k4}, by \eqref{Plancherel-Rotach Formula} again, we get
		\begin{align}
			\abs{{\partial_y}S_n(x,y)-\frac{1}{2}{\partial_y}K^{(2)}_{2n}(x,y)} = \abs{\frac{\sqrt{2}n^{3/2}}{2}\varphi_{2n}(\sqrt{2n}x)\varphi_{2n-1}(\sqrt{2n}y)} \lesssim n.
			\label{snskdediff}
		\end{align}
		Then, \eqref{convergence of D} follows from the estimate \eqref{con of dev of K2}.
		
	\end{proof}%Now we derive some uniform upper bounds for GSE kernel.  
	%By Lemma \ref{Convergence of the rescaling kernel}, 
	%for $x_0\in(-2,2)$ and $x,y=O\left(n^{-1/5}\right)$, we have the uniform estimate 
	%$$\begin{aligned}
	%	\label{estimate of the kernel}
	%\abs{ 		\widehat{K}_{n}\left(x_0+\frac{x}{n \rho_{\semi}(x_0)},x_0+\frac{y}{n\rho_{\semi}(x_0)}\right)-K(x, y) }\lesssim \left(\begin{array}{ll}
	%			n^{-1/2}  & n^{-1} \\
	%			n^{-7/10}   & n^{-1/2}
	%		\end{array}\right),
	%	\end{aligned}$$
	%	 where $K$ is the limiting kernel 
	%	$$\begin{aligned}
	%		K(x,y) :=& 
	%		\begin{bmatrix}
	%			K_{\sin}(2x-2y) & {\partial_x}K_{\sin}(2x-2y) \\
	%			\int_{0}^{x-y}K_{\sin}(2t)\mathrm{d}t & K_{\sin}(2x-2y)
	%	\end{bmatrix}.\label{K4}
	%\end{aligned}$$
	Subsequently, we need to control the behavior of the kernel at different scales. To achieve this, we provide the following uniform upper bounds. 	
	\begin{lem}
		\label{order of multi kernel}
		Let $d:=d(x,y)=\abs{x-y}$.  For any $\ep>0$ and $(x,y)\in (-2+\ep,2-\ep)$,
		\begin{align}
			\abs{S_n(x,y)} \lesssim\min\left\{\frac{1}{d},n\right\}+n^{1/2},\label{s4rough}
		\end{align}
		
		\begin{equation}\label{s4derough}
			\abs{ \partial_x S_n(x,y)}\lesssim 
			n\min\left\{\frac{1}{d},n^2d\right\}+n^{3/2},
		\end{equation}
		\begin{equation}\label{s4derough2}
			\abs{V_{n}(x,y)}=\abs{\partial_y S_n(x,y)}\lesssim n\min\left\{\frac{1}{d},n^2d\right\}+n,
		\end{equation}
		\begin{equation}\label{fnde}
			\abs{\partial_x V_{n}(x,y)}+
			\abs{\partial_y V_{n}(x,y)}\lesssim n^2\min\left\{n,\frac{1}{d}\right\},
		\end{equation} 
		\begin{equation}\label{fn2de}
			\abs{\partial^2_{xy} V_{n}(x,y)}+
			\abs{\partial^2_{yx} V_{n}(x,y)}+\abs{\partial^2_{yy} V_{n}(x,y)}\lesssim n^3\min\left\{n,\frac{1}{d}\right\},
		\end{equation} 
		\begin{equation}\label{intbd}
			\abs{\int_x^y S_n(t,w)\mathrm{d}t  }\lesssim \log n,\quad
			\forall w\in (-2+\ep,2-\ep).
		\end{equation}
	\end{lem}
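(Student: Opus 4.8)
The plan is to reduce every bound to the GUE kernel. Write \eqref{relationship of k2 and k4} as $S_n(x,y)=\tfrac12 K_{2n}^{(2)}(x,y)-C_n(x,y)$ with
\[
C_n(x,y):=\frac n2\,\varphi_{2n}(\sqrt{2n}\,x)\int_{\sqrt{2n}\,y}^{\infty}\varphi_{2n-1}(t)\,\mathrm dt,
\]
and recall both the spectral form $K_{2n}^{(2)}(x,y)=\sqrt{2n}\sum_{i=0}^{2n-1}\varphi_i(\sqrt{2n}\,x)\varphi_i(\sqrt{2n}\,y)$ and the Christoffel--Darboux identity
\[
K_{2n}^{(2)}(x,y)=\sqrt{2n}\;\frac{\varphi_{2n}(\sqrt{2n}\,x)\varphi_{2n-1}(\sqrt{2n}\,y)-\varphi_{2n-1}(\sqrt{2n}\,x)\varphi_{2n}(\sqrt{2n}\,y)}{x-y}.
\]
Everything then follows from good control of the Hermite functions together with these two representations.

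First I would record the pointwise Hermite input from \eqref{Plancherel-Rotach Formula}. Writing $\sqrt{2n}\,x=2\sqrt{2n+1}\cos\theta$ (or $2\sqrt{2n}\cos\theta$) one has $\cos\theta=\tfrac x2+O(n^{-1})$, so $\theta$ is bounded away from $0$ and $\pi$, uniformly for $x\in(-2+\ep,2-\ep)$; hence $|\varphi_{2n+j}(\sqrt{2n}\,x)|\lesssim n^{-1/4}$ for each fixed $j$, and iterating the relation $\varphi_m'(u)=\tfrac12\bigl(\sqrt m\,\varphi_{m-1}(u)-\sqrt{m+1}\,\varphi_{m+1}(u)\bigr)$ gives $|\varphi_{2n+j}^{(k)}(\sqrt{2n}\,x)|\lesssim n^{k/2-1/4}$, hence $|\partial_x^a[\varphi_{2n}(\sqrt{2n}\,x)]|\lesssim n^{a-1/4}$. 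Combined with the bound $n^{-1/4}$ for $\int_{\sqrt{2n}\,y}^\infty\varphi_{2n-1}$ from \eqref{int 3 of hermite}, this yields $|\partial_x^a\partial_y^b C_n(x,y)|\lesssim n^{a+b}$ when $b\ge1$ and $\lesssim n^{a+1/2}$ when $b=0$ — precisely the additive error terms $n^{1/2},n^{3/2},n$ in \eqref{s4rough}--\eqref{s4derough2} (a derivative in $x$ costs a factor $n$ because it hits the $\varphi_{2n}(\sqrt{2n}x)$ factor, while a first derivative in $y$ costs only $\sqrt n$).

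The main step is the GUE kernel. When $d=|x-y|\gtrsim 1/n$ I would differentiate the Christoffel--Darboux formula: by the Hermite bounds the numerator and all its derivatives are $O(n^{a+b-1/2})$, while $\partial_x^{a'}\partial_y^{b'}[(x-y)^{-1}]=O(d^{-1-a'-b'})$, so in the Leibniz expansion the term with all derivatives on the numerator dominates (since $nd\gtrsim1$), giving $|\partial_x^a\partial_y^b K_{2n}^{(2)}(x,y)|\lesssim n^{a+b}/d$. When $d\lesssim 1/n$ I would expand to first order around the diagonal, $\partial_x^a\partial_y^b K_{2n}^{(2)}(x,y)=\partial_x^a\partial_y^b K_{2n}^{(2)}(x,x)+(y-x)\,\partial_x^a\partial_y^{b+1}K_{2n}^{(2)}(x,\xi)$, bounding the remainder by the uniform estimate $|\partial_x^p\partial_y^q K_{2n}^{(2)}|\lesssim n^{p+q+1}$ — which holds for all $x,y$ in the bulk by Cauchy--Schwarz on the spectral sum together with $\sum_{i=0}^{2n-1}\varphi_i^{(k)}(\sqrt{2n}x)^2\lesssim n^{k+1/2}$ — so the remainder is $O(n^{a+b+2}d)$; the on-diagonal term is smaller, of size $O(n^{a+b+1})$, and even $O(n^{a+b})$ when $a+b$ is odd, because the surviving leading coefficient is a derivative of the smooth, even sine kernel at the origin, which vanishes at odd order (equivalently, one uses the cancellation in the Hermite equation $\varphi_m''(u)=\bigl(\tfrac{u^2}{4}-m-\tfrac12\bigr)\varphi_m(u)$, cf.\ also \eqref{con of K2}--\eqref{con of dev of K2} and \cite{KS}). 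Thus for $d\lesssim1/n$ one gets $|\partial_x^a\partial_y^b K_{2n}^{(2)}|\lesssim n^{a+b+1}+n^{a+b+2}d$, and in particular $|\partial_x K_{2n}^{(2)}|,|\partial_y K_{2n}^{(2)}|\lesssim n+n^3d$. Patching the two ranges of $d$ gives $|\partial_x^a\partial_y^b K_{2n}^{(2)}|\lesssim n^{a+b}\min\{n,1/d\}$ in general, improved to $n\min\{1/d,n^2d\}+n$ for a single derivative; adding the $C_n$-contributions from the previous paragraph then produces \eqref{s4rough}, \eqref{s4derough}, \eqref{s4derough2}, \eqref{fnde} and \eqref{fn2de}.

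Finally, for \eqref{intbd} I would split $\int_x^y S_n(t,w)\,\mathrm dt$ at $|t-w|=1/n$: on $|t-w|\le1/n$ use $|K_{2n}^{(2)}(t,w)|\lesssim n$ (from $K_{2n}^{(2)}(t,t)=2n\rho_{\semi}(t)+O(1)$ and Cauchy--Schwarz) for an $O(1)$ contribution, and on $1/n<|t-w|$ use $|K_{2n}^{(2)}(t,w)|\lesssim 1/|t-w|$ (Christoffel--Darboux with the $n^{-1/4}$ bound on each factor) for a contribution $\lesssim\int_{1/n}^{C}s^{-1}\,\mathrm ds=O(\log n)$; the $C_n$-part equals $\tfrac n2(2n)^{-1/2}\bigl(\int_{\sqrt{2n}x}^{\sqrt{2n}y}\varphi_{2n}\bigr)\cdot O(n^{-1/4})=O(n^{-1/2})$ by \eqref{phin4}. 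The hard part will be the regime $d\lesssim1/n$ for the first-order bounds \eqref{s4derough}, \eqref{s4derough2}: naive Christoffel--Darboux or recurrence only gives $O(n^2)$ on the diagonal, whereas the stated $\min\{1/d,n^2d\}$ together with the additive $n^{3/2}$ (resp.\ $n$) needs the sharper $O(n)$ on-diagonal size, i.e.\ the odd-order cancellation in the Hermite equation, and one has to check this cancellation is uniform over $x_0$ in the bulk.
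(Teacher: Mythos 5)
Your proposal follows the same strategy as the paper's proof: decompose $S_n$ into $\tfrac12 K_{2n}^{(2)}$ plus the rank-one correction $C_n$, bound $C_n$ and its derivatives via the Plancherel--Rotach bound $|\varphi_{2n}(\sqrt{2n}\,x)|\lesssim n^{-1/4}$ together with \eqref{int 3 of hermite}, and bound the GUE kernel by Christoffel--Darboux when $d\gtrsim 1/n$ and by the sine-kernel scaling limit \eqref{con of K2}--\eqref{con of dev of K2} when $d\lesssim 1/n$. The odd-order cancellation you flag as the potential sticking point is exactly what \eqref{con of dev of K2} encodes (it compares $\partial_x K_n^{(2)}$ to $\partial_x K_{\sin}$, which vanishes on the diagonal), and its uniformity in $x_0$ over the bulk is guaranteed by Theorem~1 of \cite{KS}, which the paper invokes directly in place of your Taylor-expansion argument; the substance is the same.
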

	
	\begin{proof}
		We first prove the following estimates  for the GUE kernel $K_n^{(2)}$: 
		\begin{equation}\label{k2bd}
			\begin{aligned}
				\abs{K^{(2)}_n(x,y)}\lesssim& \min\left\{\frac{1}{d},n\right\}, \\
				\abs{ \partial_x K^{(2)}_n(x,y)}+\abs{\partial_yK^{(2)}_n(x,y)}\lesssim& n\min\left\{\frac{1}{d},n^2d \right\}+n.
			\end{aligned}
		\end{equation}
		When $d\leq 1/n$, \eqref{k2bd} directly follows from \eqref{con of K2} and \eqref{con of dev of K2}. 
		For $d\geq 1/n$,
		by Christoffel-Darboux formula, we can rewrite (e.g., Lemma 3.2.5 in \cite{AGZ})
		\begin{equation}\label{cdformula}
			K_n^{(2)}(x,y)=\sqrt{n}\frac{\varphi_n(\sqrt{n}x)\varphi_{n-1}(\sqrt{n}y) -
				\varphi_n(\sqrt{n}y)\varphi_{n-1}(\sqrt{n}x)
			}{x-y}  .
		\end{equation}
		By Plancherel-Rotach formula \eqref{Plancherel-Rotach Formula}, we have
		%\begin{equation}\label{phin-1n}
		$	\abs{\varphi_n(\sqrt{n}x)}+\abs{\varphi_{n-1}(\sqrt{n}x)} \lesssim n^{-1/4}. 
		$
		This proves the first inequality in \eqref{k2bd}. We note the relation  (e.g., Lemma 3.2.7 in \cite{AGZ})
		$$
		\frac{\mathrm{d}}{\mathrm{d}x}\left( \varphi_n(\sqrt{n}x)\right)=n\varphi_{n-1}(\sqrt{n}x)-\frac{nx}{2}\varphi_n(\sqrt{n}x).
		$$
		The second  inequality in \eqref{k2bd} follows by differentiating \eqref{cdformula} with respect to $x$ together with the estimate  \begin{equation}\label{dephibd}
			\begin{split}
				\abs{  
					\frac{\textrm{d}}{\textrm{d} x} \left(\varphi_n(\sqrt{n}x)
					\right)}
				\leq  		n\left(\abs{\varphi_{n-1}(\sqrt{n}x)}+\abs{\varphi_n(\sqrt{n}x)}\right)\lesssim n^{3/4}.
			\end{split}
		\end{equation}
		%We have used \eqref{phin-1n} in the last inequality of \eqref{dephibd}. 
		%This completes the proof of  \eqref{k2bd}. 
		In fact, performing further differentiations, we have
		\begin{equation}\label{kndehi}
			\begin{split}
				\abs{    \frac{\mathrm{d}^2}{\mathrm{d}x^2} \left(\varphi_n(\sqrt{n}x) \right)}&\lesssim n^{7/4},\\ 
				\sum_{i+j=2, i,j\geq 0} \abs{\partial^i_x\partial^j_y K_n^{(2)}(x,y)}&\lesssim n^2\min\left\{\frac{1}{d},n\right\},\\
				\sum_{i+j=3, i,j\geq 0} \abs{\partial^i_x\partial^j_y K_n^{(2)}(x,y)}&\lesssim n^3\min\left\{\frac{1}{d},n\right\}.
			\end{split}
		\end{equation}
		Now we are ready to prove  \eqref{s4rough} - \eqref{intbd}.   
		To prove \eqref{s4rough}, by \eqref{Plancherel-Rotach Formula}, \eqref{phin4}, \eqref{int 3 of hermite},  \eqref{relationship of k2 and k4} and \eqref{k2bd}, we get
		\begin{equation}
			\begin{split}
				\abs{S_n(x,y)}\leq& \frac{1}{2}\abs{K_{2n}^{(2)}(x,y)}+\frac{n}{2} \abs{\varphi_{2n}\left(\sqrt{2n}x\right)}\abs{\int_{\sqrt{2n}y}^{\infty}  \varphi_{2n-1}(t) \mathrm{d} t} \\
				\lesssim& \min\left\{\frac{1}{d},n\right\}+n^{1/2}.
			\end{split}
		\end{equation}
		To prove \eqref{s4derough}, by \eqref{int 3 of hermite}, \eqref{k2bd} and \eqref{dephibd}, we get
		\begin{equation}
			\begin{split}
				\abs{ \partial_xS_n(x,y)}&\lesssim
				\frac{1}{2}\abs{\partial_x K_{2n}^{(2)}(x,y)}+\frac{n}{2}\abs{\frac{\mathrm{d}}{\mathrm{d}x}
					\left(   \varphi_{2n}(\sqrt{2n}x) \right)} \cdot \abs{\int_{\sqrt{2n}y}^{\infty}  \varphi_{2n-1}(t) \mathrm{d} t}\\
				&\lesssim  n\min\left\{\frac{1}{d},n^2d \right\}+n^{3/2}.
			\end{split}
		\end{equation}
		Now,  \eqref{s4derough2} follows directly from \eqref{snskdediff} and the second part of \eqref{k2bd}. And   \eqref{fnde} and \eqref{fn2de}
		can be proved similarly, thanks to \eqref{kndehi}. It remains to show \eqref{intbd}. We have 
		\begin{equation}\label{intbd2}
			\begin{aligned}
				\abs{\int_x^y S_n(t,w)\mathrm{d}t  }\leq&
				\frac{1}{2} 				\abs{\int_x^y K_{2n}^{(2)}(t,w)\mathrm{d}t}\\
				&+\frac{n}{2}
				\abs{\int_x^y \varphi_{2n}(\sqrt{2n}t)\mathrm{d}t} \cdot \abs{\int_{\sqrt{2n}w}^{\infty}  \varphi_{2n-1}(t) \mathrm{d} t}.
			\end{aligned}  
		\end{equation}
		By \eqref{k2bd}, we can bound the first term in \eqref{intbd2} as follows:
		\begin{equation}\label{intbd3}
			\int_x^y \abs{K_{n}^{(2)}(t,x)}\mathrm{d}t
			\lesssim\left(\int_0^{1/n} n\mathrm{d}t+\int_{1/n}^4 \frac{1}{t}\mathrm{d}t \right)\lesssim\log n.
		\end{equation}
		By \eqref{phin4} and \eqref{int 3 of hermite}, we can bound the second term in \eqref{intbd2} by
		$
		Cn\cdot n^{-5/4}\cdot  n^{-1/4}= Cn^{-1/2},
		$  which completes the proof of
		\eqref{intbd}.  
		
	\end{proof}

	\section{Main lemmas }\label{sec:pfoutline}
	
	%In this section we will give the proof outline of Theorem \ref{small gaps of GbetaE} for the GSE case where $\beta=4$.    For a given $\epsilon>0$,
	%consider the point processes consist of the eigenvalues of GSE in the bulk $(-2+\epsilon,2-\epsilon)$, 
	%	\begin{align}
	%		\label{process of eigenvalues}
	%	\xi_{n}:=\sum_{i=1}^n \delta_{\lambda_i}\mathbbm{1}[\abs{\lambda_i}<2-\epsilon]. 
	%	\end{align}
	
	%And we will substitute $\lambda_i$ for $\lambda_i^{(\beta)}$ without causing ambiguity.
	
	%	We start by introducing an auxiliary process $\tilde{\xi}_n$ from the process $\xi_n$, defined in \eqref{process of eigenvalues}. For any bounded $A\subset (0,+\infty)$,  set $A_n = n^{-6/5} A$.
	%	We can also define a thinned process $\Tilde{\xi}_{n}$ of $\xi_{n}$ obtained by only keeping the eigenvalues $\lambda_k$ such that the following two conditions hold
	%	\begin{itemize}
	%		\item[-] $\xi_{n}\left(\lambda_k+A_n\right)=1$.
	%		\item[-] There does not exists $\lambda_{\ell} \neq \lambda_k$ with $\xi_{n}\left(\lambda_{\ell}+A_n\right)=1$
	%		and $\abs{\lambda_k-\lambda_{\ell}}<\log^{-1} n$.
	%	\end{itemize} 
	%	Theorem \ref{small gaps of GbetaE} can be proved by establishing:
	%	\begin{enumerate}[label=Part \roman*.]
	%		\item The two point processes $\Upsilon_{n}(A\times \cdot)$ and $\tilde{\xi}_{n}(\cdot)$ are asymptotically equivalent. In other words, for any bounded interval $I\subset (-2+\ep,2-\ep)$, $\abs{\Upsilon_{n}(A\times I)}-\abs{\Tilde{\xi}_{n}(I) }$ converges to 0 in probability as $n\to\infty$.
	%		\item 
	%	\end{enumerate}
	
	%The part i (asymptotic equivalence) will be proved in the following lemma.
	In this section, we introduce two main lemmas that will imply  Lemma \ref{lemt} and Lemma \ref{lemf}, and thus Theorem \ref{small gaps of GbetaE} for GSE.  Recall in Section \ref{introd} that for any bounded measurable set $A\subset (0,+\infty)$,  we set $A_n = n^{-6/5} A$.  First, Lemma \ref{lemt} is the direct consequence of the following
	\begin{lem}
		%\textbf{(Small gaps do not cluster)}
		\label{No successive small gaps}
		Suppose $\lambda_1, \lambda_2,...,\lambda_n$ are the eigenvalues of GSE. For any interval $I\subset (-2+\epsilon,2-\epsilon)$, define $G_n$ to be the union of the following two sets: 
		\begin{align*}
			\left\{(\lambda_i,\lambda_{i+1},\lambda_{i+2})\in I^3: 1\leq i\leq n-2,
			\lambda_{i+1}-\lambda_{i}\in  A_n  \text{ and } 0\leq \lambda_{i+2}-\lambda_{i}\leq 2\sup\left(A_n\right) \right\},
		\end{align*}
		and 
		\begin{align*}
			\Big\{(\lambda_i,\lambda_{i+1},\lambda_{j},\lambda_{j+1})\in I^4: &1\leq i< j\leq n-1,
			\lambda_{i+1}-\lambda_{i} \in A_n,   \lambda_{j+1}-\lambda_{
				j}\in  A_n,\\
			&     \text{and }2\sup\left(A_n\right)<\lambda_j-\lambda_i<\log^{-1}n\Big\},
		\end{align*}
		respectively.  
		Then we have
		\begin{align*}
			\mathbb{P}\left(\left| G_n\right|>0 \right) \leq \mathbb{E}\left[\left| G_n\right|\right]\rightarrow 0\,\,\, \text{as}\,\,\, n\to\infty.
		\end{align*}
	\end{lem}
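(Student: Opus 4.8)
By Markov's inequality it suffices to prove $\mathbb{E}|G_n|\to0$. Write $G_n=G_n^{(1)}\cup G_n^{(2)}$ for the two sets in the statement, so $|G_n|\le|G_n^{(1)}|+|G_n^{(2)}|$, and bound the two expectations separately. Dropping the requirement that the listed eigenvalues be consecutive (which only enlarges the counts) and using that the eigenvalues of GSE are almost surely distinct, the defining property of the correlation functions gives
$$\mathbb{E}|G_n^{(1)}|\ \le\ \int_{\substack{x<y<z,\; x\in I\\ y-x\in A_n,\; z-x\le 2\sup(A_n)}}\rho_3(x,y,z)\,\mathrm{d}x\,\mathrm{d}y\,\mathrm{d}z,$$
$$\mathbb{E}|G_n^{(2)}|\ \le\ \int_{\substack{x<y<u<v,\; x\in I,\; y-x\in A_n,\; v-u\in A_n\\ 2\sup(A_n)<u-x<\log^{-1}n}}\rho_4(x,y,u,v)\,\mathrm{d}x\,\mathrm{d}y\,\mathrm{d}u\,\mathrm{d}v,$$
where $\rho_k$ is the $k$-point correlation function of GSE. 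Everything thus reduces to two pointwise bounds that exhibit the $\beta=4$ level repulsion already at the fine scale $n^{-6/5}$: with $C$ a fixed constant,
\begin{itemize}
\item[(A)] if $x_1,\dots,x_k$ (for $k\in\{2,3\}$) all lie within $Cn^{-6/5}$ of a common point of $(-2+\ep,2-\ep)$, then $\rho_k(x_1,\dots,x_k)\lesssim n^{k+4\binom{k}{2}}\prod_{1\le i<j\le k}|x_i-x_j|^4$, i.e.\ $\rho_2\lesssim n^{6}|x_1-x_2|^4$ and $\rho_3\lesssim n^{15}\prod_{i<j}|x_i-x_j|^4$;
\item[(B)] if $|x-y|\lesssim n^{-6/5}$, $|u-v|\lesssim n^{-6/5}$, $2\sup(A_n)<|x-u|<\log^{-1}n$ and all four points lie in $(-2+\ep,2-\ep)$, then $\rho_4(x,y,u,v)\lesssim n^{12}\,|x-y|^4|u-v|^4$.
\end{itemize}

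Granting (A) and (B) the two integrals are elementary power counts. In the first, the three points lie in an interval of length $\le 2\sup(A_n)=O(n^{-6/5})$, so $\prod_{i<j}|x_i-x_j|^4=O(n^{-72/5})$ and $\rho_3=O(n^{15-72/5})=O(n^{3/5})$, while the integration region has measure $O(|I|\,|A_n|\sup(A_n))=O(n^{-12/5})$; hence $\mathbb{E}|G_n^{(1)}|=O(n^{-9/5})\to0$. In the second, $\int_{y\in x+A_n}|x-y|^4\,\mathrm{d}y=O(n^{-6})$, likewise $\int_{v\in u+A_n}|u-v|^4\,\mathrm{d}v=O(n^{-6})$, and $u$ ranges over an interval of length $\le\log^{-1}n$, so $\mathbb{E}|G_n^{(2)}|\lesssim n^{12}\cdot|I|\cdot n^{-6}\cdot\log^{-1}n\cdot n^{-6}=O(|I|\log^{-1}n)\to0$. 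The logarithmic gain built into the second set is exactly what absorbs the otherwise borderline count: two small gaps are permitted to come as close as $\log^{-1}n$, but no closer.

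It remains to establish (A) and (B), which is the substance of the lemma, and here the anti-symmetry of the kernel forces a different route than in the determinantal case. Starting from $\rho_k=\pfa(JK_n(x_i,x_j))_{1\le i,j\le k}$ with $K_n$ as in \eqref{kn4exp}, one cannot appeal to Hadamard--Fischer-type inequalities to read off the repulsion, since $JK_n$ is anti-symmetric and the entries built from $S_n$, $V_n=-\partial_y S_n$ and $J_n=\int S_n$ have mismatched $n$-orders (Lemma \ref{order of multi kernel}). The plan is: (i) conjugate the kernel matrix about the relevant bulk point by the congruence of \eqref{rescaledkernel}, followed by the Pfaffian-preserving elementary operations of Section \ref{subsub:main} (Lemma \ref{lem:round1}), so that all entries become comparable in size; (ii) on this balanced matrix, Taylor expand each entry using the sine-kernel estimates \eqref{convergence of S}--\eqref{convergence of D} and, for every pair of coalescing points, extract a factor $(x_i-x_j)^4$ with a constant uniform in $n$ --- this needs the combinatorial bookkeeping of which pairings in the Pfaffian sum survive the extraction, exactly as in Lemma \ref{lem:bdoffdiag}; (iii) collect the remaining powers of $n$, which combine to $n^{k+4\binom{k}{2}}$ (the exponents $6$ for $\rho_2$ and $15$ for $\rho_3$; the $12$ in (B) then comes from $\rho_4\lesssim\rho_2\cdot\rho_2$). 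For (B) one additionally needs that two well-separated clusters decorrelate: when $|x-u|\gtrsim n^{-1}$ the off-diagonal $4\times4$ blocks of the $8\times8$ matrix are of strictly lower order, so the Pfaffian factorizes up to an $O(1)$ constant into the two on-diagonal $4\times4$ Pfaffians, yielding $\rho_4\lesssim\rho_2(x,y)\rho_2(u,v)$; combined with the $\rho_2$-bound of (A) this gives (B) when $|x-u|\gtrsim n^{-1}$, whereas for $|x-u|$ of order $n^{-6/5}$ one applies the four-point analogue of (A) directly.

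The main obstacle is step (ii): retaining the factors $\prod(x_i-x_j)^4$ with an $n$-uniform constant even though the points sit at scale $n^{-6/5}$, far below the mean spacing $n^{-1}$. For a symmetric positive kernel this is immediate from Hadamard--Fischer; here it must be extracted from the cancellation structure of the Pfaffian sum after the two rounds of transformations, and the same mechanism underlies the off-diagonal-block estimates used for the decorrelation in (B). (When $\beta=4$ one may cross-check (A) and (B) by writing $\rho_k(\vec x)=\prod_{i<j}(x_j-x_i)^4\,\Phi_{n,k}(\vec x)$ with $\Phi_{n,k}$ smooth and positive, bounding $\Phi_{n,k}$ at a configuration with spacings of order $n^{-1}$ by the pairing count of step (iii), and transporting this bound down to scale $n^{-6/5}$ via a Harnack-type estimate $|\partial_{x_i}\log\Phi_{n,k}|\lesssim n^{2}\cdot(\text{spread})$ coming from the equilibrium identity of the semicircle law; but this shortcut is special to even $\beta$ and is unavailable for the GOE argument of Section \ref{sec:pfgoe}.) The remaining steps are routine power counting.
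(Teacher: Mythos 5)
Your reduction of $\mathbb{E}|G_n^{(1)}|$ and $\mathbb{E}|G_n^{(2)}|$ to integrals of $\rho_3$ and $\rho_4$ over the indicated regions is exactly the paper's starting point: after noting $\mathbb{P}(|G_n|>0)\le\mathbb{E}|G_n|$, these two integrals are precisely $\int_I E_{1,1,n}\,\mathrm{d}\lambda_1$ and $\int_I E_{2,1,n}\,\mathrm{d}\lambda_1$ from \eqref{lower}, and the paper simply observes that Lemma \ref{No successive small gaps} is the case $k=1$ of the negligibility statements \eqref{1error} and \eqref{2error}, whose proof occupies all of Section \ref{sec:err}. Up to that point you and the paper agree.

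The genuine gap is in how you propose to bound those integrals. You want the pointwise repulsion estimates (A)--(B), of the form $\rho_k\lesssim n^{k+4\binom{k}{2}}\prod_{i<j}|x_i-x_j|^4$ with a constant uniform in $n$ over the whole coalescing region. These are strictly stronger than anything the paper establishes, and the mechanism you sketch for proving them does not actually deliver them. Concretely: after the Round 1 and Round 2 transformations, the paper's Lemma \ref{lem:round1} gives entry-wise \emph{sup} bounds such as $|M^{(R)}_{i,i,22}(1,2)|\lesssim n^{1/5}$, obtained from an expansion of the form
$$M^{(R)}_{i,i,22}(1,2)=\frac{4n\rho_{\semi}(\lambda_i)u_i^4}{15}+O\bigl(n^{-1/5}\bigr),\qquad u_i=n\rho_{\semi}(\lambda_i)(\lambda_i-x_i),$$
and the $O(n^{-1/5})$ error is a uniform bound over $|x_i-\lambda_i|\lesssim n^{-6/5}$ that does \emph{not} vanish as $x_i\to\lambda_i$. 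So you cannot read off $|M^{(R)}_{i,i,22}(1,2)|\lesssim n^5|x_i-\lambda_i|^4$ from the paper's estimates; the same obstruction appears in every entry. Your step (ii) — ``extract a factor $(x_i-x_j)^4$ with a constant uniform in $n$'' — therefore requires bounds on the kernel and all its derivatives up to fourth order that are tailored to the coalescence regime (so that the remainder in the Taylor expansion also carries the $(x_i-x_j)^4$ factor), and this is nowhere in the paper nor in your proposal. The parenthetical Harnack/equilibrium-identity ``cross-check'' is an unsubstantiated heuristic. Consequently (A) and (B) are asserted but not proved, and the power counting built on them is floating.

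It is worth emphasizing why the paper can avoid (A)--(B) entirely: it never needs a pointwise repulsion factor because the integration region already has measure $\lesssim n^{-6(k+1)/5}$. The paper bounds each transformed Pfaffian entry by a pure power of $n$ (tolerating a weaker sup bound on $\rho_{2k+1}$ and $\rho_{2k+2}$), and then uses the combinatorial identity of Lemma \ref{lem:ord1bd} and its variants to show the total $n$-exponent is strictly less than $6(k+1)/5$, so that after multiplying by the region's volume the error tends to zero. This trades the hard pointwise repulsion estimate, which you need, for a cruder but fully controlled $n$-order count, which the paper carries out. If you want to complete your alternative route you would need to prove something like ``$\widehat{S}_n-K_{\sin}$ and its derivatives up to order $3$ are $O(n^{-1/2})$ uniformly in the local window,'' and then run a careful Taylor expansion of the Pfaffian around the coalescing point; as written, that part of the argument is missing.
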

	Recall the definitions of the point processes $\xi_n$ and $\tilde\xi_n$ from Section \ref{introd}.  Let  $\rho_k$ and $\tilde\rho_k$ denote the $k$-point correlation functions of  $\xi_n$ and $\tilde\xi_n$, respectively.  To prove Lemma \ref{lemf}, we will derive some upper and lower bounds for the correlation functions $\tilde{\rho}_{k}$ and  show that these two bounds match up to the leading order. Then we will prove that the integral of the leading order term yields the limit in \eqref{factorial_conv}. 
	
	Specifically,  we define the set   
	\begin{equation}\label{omegak}
		\Omega_{k}:=\left\{(\lambda_1,\ldots, \lambda_k)\in (-2+\ep,2-\ep)^k: \min_{i\neq j}\abs{\lambda_i-\lambda_j}>\log^{-1} n \right\}.
	\end{equation}
	% $$
	% \tilde{\Omega}_k:=\left\{(\lambda_1,\ldots, \lambda_k)\in (-2+\ep,2-\ep)^k: \min_{i\neq j}\abs{\lambda_i-\lambda_j}>n^{-0.1}+2n^{-\frac{\beta+2}{\beta+1}} \sup(A)\right\}.
	% $$
	By the definition of $\tilde{\xi}_n$, we have 
	\begin{align}
		\tilde{\rho}_{k}(\lambda_1,\ldots, \lambda_k)\equiv0\quad\text{for}\quad(\lambda_1,\ldots, \lambda_k)\in\Omega_{k}^c.
		\label{cor from def of tilde rho}
	\end{align}
	On $\Omega_{k}$ we first have the upper bound
	\begin{equation}\label{upper}
		\begin{split}
			\tilde{\rho}_{k}(\lambda_1,\ldots, \lambda_k) \leq 	&\int_{\lambda_1+A_n}\cdots \int_{\lambda_k+A_n}\rho_{2k}(\lambda_1,x_1,...,\lambda_k,x_k)\mathrm{d}x_1\cdots\mathrm{d}x_k\\
			:=&L_{k,n}(\lambda_1,\ldots,\lambda_k).
		\end{split}	
	\end{equation}
	Given any $(\lambda_1,\ldots, \lambda_k)\in \Omega_{k}$, we define two sets 
	\begin{equation}
		T_{1,k,n}= 		T_{1,k,n}(\lambda_1,\ldots, \lambda_k)	:=\cup_{i=1}^k \left(\lambda_i, \lambda_i+2\sup\left(A_n\right)\right),
		\label{T1}
	\end{equation}
	and
	\begin{equation}
		\begin{split}T_{2,k,n}=& T_{2,k,n}(\lambda_1,\ldots, \lambda_k)\\:=&\cup_{i=1}^k\left\{(x,y): y\in x+A_n, 2\sup\left(A_n\right)<\abs{x-\lambda_i}\leq \log^{-1}n  \right\}. \end{split}
		\label{T2}
	\end{equation}
	Then on $\Omega_{k}$ we have  the lower bound
	\begin{equation}\label{lower}
		\begin{split}
			& \tilde{\rho}_k(\lambda_1,\ldots, \lambda_k) \\  \geq &
			\int_{\lambda_1+A_n}\cdots \int_{\lambda_k+A_n}\rho_{2k}(\lambda_1,x_1,...,\lambda_k,x_k)\mathrm{d}x_1\cdots\mathrm{d}x_k\\
			-&\int_{\lambda_1+A_n}\cdots \int_{\lambda_k+A_n}\int_{T_{1,k,n}}  \rho_{2k+1}(\lambda_1,x_1,...,\lambda_k,x_k,z)\mathrm{d}z\mathrm{d}x_1\cdots\mathrm{d}x_k\\
			-&\int_{\lambda_1+A_n}\cdots \int_{\lambda_k+A_n}\int_{T_{2,k,n}}\rho_{2k+2}(\lambda_1,x_1,...,\lambda_k,x_k,z_1,z_2)\mathrm{d}z_1\mathrm{d}z_2\mathrm{d}x_1\cdots\mathrm{d}x_k\\
			:=&L_{k,n}(\lambda_1,\ldots, \lambda_k)-E_{1,k,n}(\lambda_1,\ldots, \lambda_k)-E_{2,k,n}(\lambda_1,\ldots, \lambda_k).
		\end{split}
	\end{equation}
	In the following, we will simply write these terms as $L_{k,n},E_{1,k,n}$, and $E_{2,k,n}$.
	The main task of the article is to prove the following lemma to control the integration of the leading order term $L_{k,n}$
	and the error terms $E_{1,k,n}$ and $E_{2,k,n}$. 
	
	\begin{lem}\label{mainlemma}
		For any interval $I \subset (-2+\ep, 2-\ep)$, we have the convergence
		\begin{equation}\label{main}
			\lim_{n\to\infty} \int_{I^k \cap \Omega_{k}}L_{k,n} \mathrm{d}\lambda_1\cdots \mathrm{d}\lambda_k 
			= \left(\dfrac{1}{{540\pi^2}}\int_A u^{4}\mathrm{d}u\right)^k\int_{I^{k}} \prod_{i=1}^{k} \left( 2\pi\rho_{\semi}(\lambda_i)\right)^{6} \mathrm{d} \lambda_{1} \cdots \mathrm{d} \lambda_{k}.
		\end{equation}
		We also have the negligibility of the error terms
		\begin{equation}\label{1error}
			\lim_{n\to\infty}  \int_{I^k \cap \Omega_{k}}E_{1,k,n}\mathrm{d}\lambda_1\cdots \mathrm{d}\lambda_k = 0, 
		\end{equation}
		and 
		\begin{equation}\label{2error}
			\lim_{n\to\infty}  \int_{I^k \cap \Omega_{k}}E_{2,k,n}\mathrm{d}\lambda_1\cdots \mathrm{d}\lambda_k = 0.
		\end{equation}
	\end{lem}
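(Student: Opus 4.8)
The plan is to reduce all three statements to the uniform Pfaffian kernel estimates of Lemma \ref{order of multi kernel} and the local convergence of Lemma \ref{Convergence of the rescaling kernel}, combined with a careful bookkeeping of the Pfaffian expansion of $\rho_{2k}$, $\rho_{2k+1}$, $\rho_{2k+2}$ into products over $4\times 4$ blocks (one block per pair $(\lambda_i,x_i)$). The central identity I would use is \eqref{anticor}: each $\rho_m$ is the Pfaffian of a $2m\times 2m$ anti-symmetric matrix built from $2\times 2$ blocks $JK_n(\cdot,\cdot)$; grouping the $m$ arguments into pairs turns this into a sum over pairings of $2m$ indices that, after collecting terms, organizes into a "block Pfaffian" with $4\times 4$ diagonal blocks $JK_n$ evaluated on the pair $(\lambda_i,x_i)$ and $4\times 4$ off-diagonal blocks coupling pair $i$ to pair $j$. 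On $\Omega_k$ the arguments $\lambda_i,\lambda_j$ are separated by at least $\log^{-1}n$, while $x_i-\lambda_i\in A_n = n^{-6/5}A$ is of order $n^{-6/5}$; this scale separation is exactly what forces the off-diagonal blocks to be subleading.

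For \eqref{main}: I would first invoke the block-decomposition of $\rho_{2k}$ to write $L_{k,n}$ as the integral over $\prod_i(\lambda_i+A_n)$ of $\prod_{i=1}^k \pfa\big(JK_n(\lambda_i,x_i)\big)$ plus off-diagonal remainders. The leading term is handled pair-by-pair: rescaling $x_i = \lambda_i + u_i/(n\rho_{\semi}(\lambda_i))$ with $u_i$ of order $n^{-1/5}$, and using \eqref{convergence of S}, \eqref{convergence of I}, \eqref{convergence of D}, one finds $\pfa(JK_n(\lambda_i,x_i))$ is dominated by the product of the small entries; a Taylor expansion of the sine-kernel Pfaffian near coincidence gives the $u^4$ behavior — this is where the constant $540\pi^2$ and the power $(2\pi\rho_{\semi})^6$ emerge, after changing variables back from $u_i$ to $x_i$ (the Jacobian contributes $(n\rho_{\semi})^{-1}$ per variable, and the $n$-powers combine with $A_n = n^{-6/5}A$ to leave a finite limit). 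The off-diagonal remainders are bounded, exactly as flagged in the introduction's outline, via the two rounds of congruence transformations on $JK_n$ (using the Pfaffian invariance \eqref{keypfaid} under unit-determinant congruence) to balance the entry orders from Lemma \ref{order of multi kernel}, followed by the combinatorial counting of how many mismatched entries any non-diagonal pairing must contain; each such mismatch costs a factor that is a negative power of $n$ after integration over $\Omega_k$, so the remainders vanish.

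For \eqref{1error} and \eqref{2error}: these are the same mechanism with one or two extra integration variables $z$ (resp. $z_1,z_2$). For $E_{1,k,n}$, the extra point $z$ lies in $T_{1,k,n}$, a union of intervals of length $2\sup(A_n)=O(n^{-6/5})$; integrating $\rho_{2k+1}$ over that tiny set, and using $|S_n|\lesssim \min\{1/d,n\}+n^{1/2}$, $|V_n|\lesssim n\min\{1/d,n^2d\}+n$ etc. to bound the extra $2\times 2$ block coupling $z$ to the rest, produces an extra small factor relative to $L_{k,n}$ — the point being that $z$ is forced to be within $O(n^{-6/5})$ of some $\lambda_i$, on which scale the correlation $\rho_{2k+1}$ does not blow up faster than polynomially. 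For $E_{2,k,n}$, the pair $(z_1,z_2)$ satisfies $z_2-z_1\in A_n$ and $z_1$ is at distance in $(2\sup A_n,\log^{-1}n)$ from the $\lambda_i$'s; here the gain comes from the fact that the $4\times 4$ block for $(z_1,z_2)$ again contributes like $u^4$-small factors (its small entries) after rescaling by the gap $z_2-z_1 = O(n^{-6/5})$, while integration of the $z_1$-variable over a region of measure $O(\log^{-1}n)$ and the mixing-order estimates keep everything else bounded; the net power of $n$ is strictly negative.

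The main obstacle is the bookkeeping of the off-diagonal contributions in the Pfaffian expansion: unlike the determinantal case where the Hadamard–Fischer inequality cleanly bounds principal minors, here the anti-symmetric kernel matrix has entries ($S_n$, $V_n=-\partial_y S_n$, $J_n=\int S_n$) of genuinely different orders in $n$ and $d$, and these orders are interwoven across a pairing. The hard part will be to carry out the two rounds of unit-determinant congruence transformations so that every entry of the transformed $4k\times 4k$ (or $(4k+2)$-, $(4k+4)$-dimensional) matrix has a controlled order, and then to set up the combinatorial count showing that any pairing which is not the "all-diagonal" one necessarily pairs indices from different blocks in a way that loses at least one power of $n^{1/5}$ (equivalently, of $1/d$) per crossing — enough, after integrating $\lambda$'s over $I^k\cap\Omega_k$ and the auxiliary variables over their small domains, to send the remainder to $0$. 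This is precisely the content promised by Lemma \ref{lem:round1}, Lemma \ref{lem:bdoffdiag}, and Lemma \ref{lem:ord1bd} in the introduction's outline, and it is where essentially all the work of the paper lies.
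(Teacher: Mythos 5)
Your plan for \eqref{main} is essentially the paper's: decompose $\pfa\M_{4k}$ into the on-diagonal product $\prod_i\pfa M_{i,i}$ (which, after rescaling $u_i=n\rho_{\semi}(\lambda_i)(x_i-\lambda_i)$ and Taylor expansion of the sine-kernel Pfaffian, gives the $u^4/(540\pi^2)$ density and the $(2\pi\rho_{\semi})^6$ factor) plus an off-diagonal remainder, then kill the remainder by two unit-determinant congruence transformations that balance the entry orders, followed by the counting argument (Lemmas~\ref{lem:round1}, \ref{lem:bdoffdiag}, \ref{lem:ord1bd}). Your sketch for \eqref{1error} is also the paper's mechanism: an extra point $z$ confined to $T_{1,k,n}$, an interval of length $O(n^{-6/5})$, producing an extra small factor after the analogous transformations on a $(4k+2)\times(4k+2)$ matrix.

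However, your sketch for \eqref{2error} has a genuine gap. You treat the extra pair $(z_1,z_2)$ as if its contribution can be controlled by the smallness of its own $u^4$-type diagonal Pfaffian plus the small Lebesgue measure of the $z_1$-domain, while ``the mixing-order estimates keep everything else bounded.'' This last claim is false and is precisely where the difficulty lies. The domain of $z_1$ is $2\sup A_n<|z_1-\lambda_1|<\log^{-1}n$, so the lower end $|z_1-\lambda_1|\sim n^{-6/5}$ \emph{violates} the $\log^{-1}n$ separation that underlies the uniform off-diagonal bound \eqref{2order of good off-diagonal after trans}. The off-diagonal block coupling the $(\lambda_1,x_1)$ pair to the $(z_1,z_2)$ pair is therefore \emph{not} bounded uniformly; its entries depend on $d=|z_1-\lambda_1|$ and can reach size $O(n)$. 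The paper addresses this by (i) splitting $L_{k+1,n}$ into its diagonal part $L_{k+1,n,D}$ — which is indeed uniformly bounded and killed by $\mathcal L(T_{3,k,n})\to 0$, matching your sketch — and its off-diagonal part $L_{k+1,n,O}$; (ii) for the off-diagonal part, replacing the $\log n$ and $n^{1/2}$ bounds in the offending block $M_{1,2}^{(F)}$ by the $d$-dependent quantities $f_n(\lambda_1,z_1)=\min\{n,1/|\lambda_1-z_1|\}$ and $\bar f_n=f_n+n^{1/2}$ (see \eqref{badbd}); (iii) integrating over $z_1$, which produces the exponent $\kappa(Y_1(\sigma),Y_2(\sigma))$ defined by \eqref{kab}; and (iv) establishing the refined combinatorial inequality $\ord_1+\kappa(Y_1,Y_2)<6(k+1)/5$ through Lemmas~\ref{lem:o1'y3'} and \ref{lem:y1bar}, which requires a case analysis on the parity of $|\bar O_1|+Y_1+Y_2$ and a doubly-substochastic-matrix optimization absent from your sketch. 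Without this $d$-dependent refinement and the additional counting, your argument for \eqref{2error} cannot conclude.
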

	Lemma \ref{No successive small gaps} is a direct consequence of \eqref{1error}  and \eqref{2error} for the special case $k=1$, which, in turn, implies Lemma \ref{lemt} as mentioned earlier at the beginning of this section.
	
	Recall the relation between the correlations function and the factorial moment, we have 
	$$\mathbb{E}\left(\frac{|\tilde{\xi}_{n}(I)| !}{\left(|\tilde{\xi}_n(I)|-k\right) !}\right) =\int_{I^k}\tilde \rho_k d\lambda_1\cdots d\lambda_k=\int_{I^k\cap\Omega_k^c}\cdots +\int_{I^k\cap \Omega_k} \cdots. $$
	This together with \eqref{cor from def of tilde rho}, \eqref{upper}, \eqref{lower} and  Lemma \ref{mainlemma} will  imply Lemma \ref{lemf}. 
	
	Therefore, to prove Theorem \ref{small gaps of GbetaE} for GSE, it is sufficient  to prove Lemma \ref{mainlemma}. We will 
	prove \eqref{main}   in Section \ref{subsec:main}.
	The limits \eqref{1error} and  \eqref{2error} 
	will be established in Sections   \ref{subsec:error} and \ref{subsec:e2kn}, respectively.

	\section{Estimates of the leading order term}\label{subsec:main}
	In this section, we prove \eqref{main}. Recall that 
	$$L_{k,n}=\int_{\lambda_1+A_n}\cdots \int_{\lambda_k+A_n}\rho_{2k}(\lambda_1,x_1,...,\lambda_k,x_k)\mathrm{d}x_1\cdots\mathrm{d}x_k.$$
	% To prove \eqref{main},  we need the estimates of 
	%the correlation functions  on the set 
	By the Pfaffian structure of GSE in \eqref{anticor}, the correlation function is 
	$$
	\rho_{2 k}\left(\lambda_{1}, x_{1}, \ldots, \lambda_{k}, x_{k}\right) = \pfa \M_{4k}.
	\label{first term}
	$$
	Here, 
	$\M_{4k}$ is an anti-symmetric $4k \times 4k$ matrix:  	\begin{equation} \label{4k4k}
		\M_{4k} := \left( M_{i,j}\right)_{1\leq i,j\leq k}, \quad
		M_{i,j} =
		\begin{bmatrix}
			JK_n(\lambda_i,\lambda_j) & JK_n(\lambda_i, x_j) \\			JK_n(x_i,\lambda_j) & JK_n(x_i,x_j)
		\end{bmatrix}, 
	\end{equation}
	where each block $M_{i,j}$ is a $4\times4$ matrix.
	
	Recall the definition of $\mathcal{P}_{4k}$ in  \eqref{definition of P}, 
	we define its \emph{diagonal subset} 	$$\mathcal{D}_{4k} := \left\{\sigma\in \mathcal{P}_{4k}: \lceil\sigma_{2i-1}/4\rceil = \lceil\sigma_{2i}/4\rceil, i=1,...,2k\right\},$$
	where $\lceil a \rceil $ is the smallest integer greater than or equal to $a$.
	We can thus decompose  
	\begin{equation}
		\label{diagofdiag}
		\pfa \M_{4k}:=\pfaD \M_{4k}+\pfaO \M_{4k},
	\end{equation}
	where 
	$$
	\pfaD \M_{4k} 	=	\sum_{\sigma\in \mathcal{D}_{4k}}\mathrm{sgn}(\sigma)\M_{4k}(\sigma_1,\sigma_2)\M_{4k}(\sigma_3,\sigma_4)\cdots \M_{4k}(\sigma_{4k-1},\sigma_{4k})= \prod_{i=1}^{k} \pfa M_{i,i}
	\label{pfaffian of diag}
	$$
	and 
	$$
	\pfaO \M_{4k} 	=	\sum_{\sigma\in \mathcal{P}_{4k}\backslash \mathcal{D}_{4k}}\mathrm{sgn}(\sigma)\M_{4k}(\sigma_1,\sigma_2)\M_{4k}(\sigma_3,\sigma_4)\cdots \M_{4k}(\sigma_{4k-1},\sigma_{4k}).
	\label{pfaffian of off-diag}
	$$
	%We will compute the asymptotics of  the diagonal part $\pfaD \M_{4k}$ in Lemma \ref{lem:diag pfa}, which coincides with the limit on the right hand side of \eqref{main}. 
	%We shall   also prove that $\pfaO \M_{4k}$ makes a negligible contribution  to the integral of $L_{k,n}$ as $n\to\infty$.
	%And thus  \eqref{main} is a directly consequence of the following two lemmas.
	%We will compute the limit of the integral of  the diagonal part $\pfaD \M_{4k}$ in Lemma \ref{lem:diag pfa}, which coincides with the limit on the right-hand side of \eqref{main}. 
	%We also prove that $\pfaO \M_{4k}$ makes a negligible contribution  to the integral of $L_{k,n}$  in Lemma \ref{sssec:offd}.
	%\subsection{Proof of  equations \eqref{main}}

	\subsection{On-diagonal part $\pfaD \M_{4k}$}\label{might}
	For the on-diagonal part, we will prove 
	\begin{lem}\label{lem:diag pfa}
		Uniformly in $(\lambda_1,...,\lambda_k)\in I^k$, we have 
		$$
		\lim_{n\to\infty}\prod_{i=1}^{k}\int_{\lambda_i+A_n}\pfa M_{i,i}\mathrm{d}x_i
		=\left(\dfrac{1}{540\pi^2}\int_A u^{4}\mathrm{d}u\right)^k
		\prod_{i=1}^{k}  \left( 2\pi\rho_{\semi}(\lambda_i)\right)^{6}.
		\label{con of diag pfaffian}
		$$
	\end{lem}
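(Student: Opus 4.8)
The plan is to reduce the statement to a single‑pair short‑distance asymptotic. Since the left‑hand side factorizes over $i$ and the target constant factorizes as $\prod_{i=1}^{k}\big(\tfrac1{540\pi^2}\int_A u^4\,\mathrm{d}u\big)\big(2\pi\rho_{\semi}(\lambda_i)\big)^6$, and since all the quantities involved are uniformly bounded for $\lambda_i$ in the compact interval $I$, it suffices to show
\[
\int_{\lambda+A_n}\pfa M_{i,i}(\lambda,x)\,\mathrm{d}x\ \longrightarrow\ \frac{1}{540\pi^2}\Big(\int_A u^4\,\mathrm{d}u\Big)\big(2\pi\rho_{\semi}(\lambda)\big)^6
\]
uniformly in $\lambda\in I$, where by \eqref{anticor} one has $\pfa M_{i,i}(\lambda,x)=\rho_2(\lambda,x)$. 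Expanding the $4\times4$ Pfaffian of the block in \eqref{4k4k} with the matrix $J$ of \eqref{anticor} and the entries \eqref{kn4exp}, \eqref{def of kernel 4} gives the scalar identity $\pfa M_{i,i}(\lambda,x)=S_n(\lambda,\lambda)S_n(x,x)+J_n(\lambda,x)V_n(\lambda,x)-S_n(\lambda,x)S_n(x,\lambda)$.

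Next I would Taylor‑expand $\pfa M_{i,i}(\lambda,\lambda+z)$ in $z$ on the tiny window $|z|\le n^{-6/5}\sup A$. The key is the exact finite‑$n$ identity from \eqref{relation of 2 S(x,y)}, namely $V_n(x,y)=-V_n(y,x)$, equivalently that $\psi_n(x,y):=\partial_y S_n(x,y)$ is anti‑symmetric. Differentiating this anti‑symmetry and restricting to the diagonal yields the chain of exact relations
\[
\psi_n(\lambda,\lambda)=0,\quad \partial_x\psi_n(\lambda,\lambda)+\partial_y\psi_n(\lambda,\lambda)=0,\quad \partial_x\partial_y\psi_n(\lambda,\lambda)=0,\quad \partial_x^2\psi_n(\lambda,\lambda)+\partial_y^2\psi_n(\lambda,\lambda)=0,
\]
which, together with $J_n(\lambda,\lambda)=0$, force a direct bookkeeping of the coefficients of $z^0,z^1,z^2,z^3$ in $\pfa M_{i,i}(\lambda,\lambda+z)$ to vanish \emph{identically in $n$}. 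Hence $\pfa M_{i,i}(\lambda,\lambda+z)=c_n(\lambda)\,z^4+R_n(\lambda,z)$, where $c_n(\lambda)$ is an explicit polynomial in $S_n(\lambda,\lambda)$ and the diagonal derivatives $\partial_x^i\partial_y^j S_n(\lambda,\lambda)$, $i+j\le4$.

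To identify $c_n(\lambda)$, note that by the congruence invariance of the Pfaffian applied to the rescaling \eqref{rescaledkernel} with $x_0=\lambda$ one has $\rho_2\big(\lambda,\lambda+\tfrac{w}{n\rho_{\semi}(\lambda)}\big)=\big(n\rho_{\semi}(\lambda)\big)^2\,\pfa\widehat M$, where $\widehat M$ is the $4\times4$ block built from $\widehat K_n$; Lemma \ref{Convergence of the rescaling kernel}, extended routinely to higher $x$‑derivatives via the Plancherel--Rotach formula \eqref{Plancherel-Rotach Formula}, lets one replace $\widehat S_n,\widehat V_n,\widehat J_n$ by their sine‑kernel limits built from $K_{\sin}(2\cdot)$ (see \eqref{sinekerneldef}), the approximation errors entering only through the single coefficient $c_n(\lambda)$ at relative order $o(1)$. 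A Taylor expansion of the limiting Pfaffian,
\[
1+2K_{\sin}'(2w)\int_0^w K_{\sin}(2r)\,\mathrm{d}r-K_{\sin}(2w)^2=\tfrac{16\pi^4}{135}\,w^4+O(w^6),
\]
then gives $c_n(\lambda)=\tfrac{16\pi^4}{135}\big(n\rho_{\semi}(\lambda)\big)^6\big(1+o(1)\big)$ uniformly in $\lambda\in I$. For the remainder I would use the pointwise bounds of Lemma \ref{order of multi kernel}, extended to fourth and fifth derivatives of $S_n$, to obtain $\abs{R_n(\lambda,z)}\lesssim |z|^5\sup_{|t|\le|z|}\abs{\partial_t^5\pfa M_{i,i}(\lambda,\lambda+t)}\lesssim n^{7}|z|^5$ on the window, so that $\int_{\lambda+A_n}\abs{R_n}\,\mathrm{d}x=O(n^{-1/5})$. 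Therefore $\int_{\lambda+A_n}\pfa M_{i,i}(\lambda,x)\,\mathrm{d}x=c_n(\lambda)\int_A(un^{-6/5})^4n^{-6/5}\,\mathrm{d}u+o(1)=\tfrac{16\pi^4}{135}\rho_{\semi}(\lambda)^6\int_A u^4\,\mathrm{d}u+o(1)$, and since $\tfrac{16\pi^4}{135}=\tfrac{1}{540\pi^2}(2\pi)^6$ this is exactly the claimed limit.

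The main obstacle is the exact cancellation of the coefficients of $z^0,\dots,z^3$: each of the three summands in $\pfa M_{i,i}$ is individually of size $(n\rho_{\semi})^2$, whereas on the window the answer is of size $(n\rho_{\semi})^2(z\,n\rho_{\semi})^4\ll(n\rho_{\semi})^2$, so the sine‑kernel approximation on its own — whose relative errors are only $O(n^{-1/2})$ — is far too crude to resolve the leading behavior. It is precisely the exact anti‑symmetry $V_n(x,y)=-V_n(y,x)$ and its differentiated consequences that force the cancellation to hold at finite $n$; only the surviving fourth‑order coefficient $c_n(\lambda)$ then needs to be computed asymptotically, and there the mild approximation errors are harmless.
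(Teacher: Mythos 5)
Your proposal takes a genuinely different route from the paper. You Taylor-expand $\pfa M_{i,i}(\lambda,\lambda+z)$ in $z$, argue that the antisymmetry relations for $V_n$ (and $J_n(\lambda,\lambda)=0$) force the coefficients of $z^0,\dots,z^3$ to vanish \emph{identically in $n$}, and then read off the $z^4$-coefficient $c_n(\lambda)$ from the sine-kernel limit of high-order diagonal derivatives. The paper does something different: it first rewrites $S_n(\lambda,\lambda)S_n(x,x)-S_n(\lambda,x)S_n(x,\lambda)$ as $S_n(x,x)\int_\lambda^x V_n(\lambda,t)\,\mathrm dt-S_n(\lambda,x)\int_\lambda^x V_n(x,t)\,\mathrm dt$, and then applies the sine-kernel convergence of Lemma \ref{Convergence of the rescaling kernel} factor by factor. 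The integrated $V_n$-factor has absolute error $O(n^{-1/5})$ (the integrand error $O(n)$ times the window width $n^{-6/5}$), so the total additive error is $O(n^{11/10})$, which is strictly smaller than the surviving $u^4$-term of order $n^{6/5}$, and then one Taylor-expands only the limiting sine-kernel expression. Your concluding remark that ``the sine-kernel approximation on its own --- whose relative errors are only $O(n^{-1/2})$ --- is far too crude'' is therefore not quite right: that relative error is precisely what the paper's factorized form exploits, and it suffices.

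Your algebraic claim about the exact finite-$n$ cancellation is correct: writing $\psi_n=\partial_2 S_n$, the constraints $\psi_n(\lambda,\lambda)=0$, $\partial_1\psi_n+\partial_2\psi_n=0$ on the diagonal, $\partial_1\partial_2\psi_n=0$ on the diagonal, and $\partial_1^2\psi_n+\partial_2^2\psi_n=0$ on the diagonal do annihilate the coefficients of $z^0,\dots,z^3$ in $\rho_2(\lambda,\lambda+z)$ (this is also forced abstractly by the factor $\prod_{i<j}|\lambda_i-\lambda_j|^4$ in the joint density). The Taylor coefficient $\tfrac{16\pi^4}{135}$ and its conversion $\tfrac{16\pi^4}{135}=\tfrac{1}{540\pi^2}(2\pi)^6$ are also right.

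The genuine gap is the step you describe as ``Lemma \ref{Convergence of the rescaling kernel}, extended routinely to higher $x$-derivatives.'' That extension is exactly what your route requires and the paper's route avoids. To identify $c_n(\lambda)$ you must show that \emph{all} rescaled diagonal derivatives $\partial^i_1\partial^j_2\widehat S_n$, $i+j\le 4$, converge uniformly to those of $K_{\sin}(2\cdot)$ with $o(1)$ error; Lemma \ref{Convergence of the rescaling kernel} and the reference \cite{KS} supply only the kernel and its first derivative, and Lemma \ref{order of multi kernel} gives only \emph{upper bounds} (not convergence) up to $\partial^3 S_n$. Likewise, your remainder bound $|\partial_z^5\,\pfa M_{i,i}|\lesssim n^7$ requires controlling $\partial_z^5 V_n$, i.e.\ sixth derivatives of $S_n$, which is beyond what the paper establishes. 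These additional estimates are plausible via Plancherel--Rotach and Christoffel--Darboux, but they constitute real work and are not ``routine'' in the sense of following from what is already in the text. In short: your structure is sound and your arithmetic is correct, but to be a complete proof it needs a lemma-level extension of the kernel asymptotics up to fourth-order derivatives on the diagonal (for the coefficient) and sixth-order global bounds (for the remainder), neither of which is currently available. The paper's $V_n$-integral rewriting is precisely the device that makes that extra machinery unnecessary.
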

	
	\begin{proof}
		For the diagonal block $ M_{i,i}$,  by \eqref{def of kernel 4}, its Pfaffian reads
		\begin{equation} \label{pfaffian of Mii}
			\begin{split}
				\pfa  {M}_{i,i} 			=& S_n(\lambda_i,\lambda_i)S_n(x_i,x_i) - S_n(\lambda_i,x_i)S_n(x_i,\lambda_i)+J_n(\lambda_i,x_i)V_n(\lambda_i,x_i) \\
				=& S_n(x_i,x_i)\int_{\lambda_i}^{x_i} V_n(\lambda_i,t)\mathrm{d}t
				-S_n(\lambda_i,x_i)\int_{\lambda_i}^{x_i} V_n(x_i,t)\mathrm{d}t\\
				&+J_n(\lambda_i,x_i)V_n(\lambda_i,x_i). 
			\end{split}
		\end{equation}
		By Lemma \ref{Convergence of the rescaling kernel} and the bound $\abs{x_i-\lambda_i} \lesssim n^{-6/5}$, we have the uniform estimates, 
		$$
		S_n(x_i,x_i)=n\rho_{\semi}(\lambda_i)+O(n^{1/2}),$$ and 
		\begin{align*}   \int_{\lambda_i}^{x_i} V_n(\lambda_i,t)\mathrm{d}t&=     \int_{0}
			^{x_i-\lambda_i}n\rho_{\semi}(\lambda_i)\Big(\partial_t \left[K_{\sin}(-2n\rho_{\semi}(\lambda_i)t)\right]+O\left(1\right)\Big)\mathrm{d}t\\
			&  = n\rho_{\semi}(\lambda_i) (K_{\sin}(2u_i)-1)+O\left(n^{-1/5}\right),\end{align*}
		where  $u_i:=n\rho_{\semi}(\lambda_i)(x_i-\lambda_i)\lesssim n^{-1/5}$ and thus
		$
		n\rho_{\semi}(\lambda_i)(K_{\sin}(2u_i)-1)\lesssim  n^{3/5}
		$.
		This will yield the estimate for the first term in \eqref{pfaffian of Mii} as follows, 
		\begin{align*}
			& S_n(x_i,x_i)\int_{\lambda_i}^{x_i} V_n(\lambda_i,t)\mathrm{d}t
			- n^2\rho^2_{\semi}(\lambda_i)\left(K_{\sin}(2u_i)-1\right)\\
			\lesssim & n \cdot n^{-1/5}+n^{1/2}\cdot n^{3/5}+n^{1/2}\cdot n^{-1/5}\lesssim n^{11/10}. 
		\end{align*}
		The estimates of the other two terms in \eqref{pfaffian of Mii} can be derived in the same way. In the end we  obtain the following uniform estimate
		$$\pfa M_{i,i}
		= n^2\rho_{\semi}^2\left(\lambda_i\right)\left(1-K^2_{\sin}(2u_i) + \partial_{u_i}[K_{\sin}(2u_i)]\int^{u_i}_0K_{\sin}(2t)\mathrm{d}t \right) +O\left(n^{11/10}\right).
		\label{uniform con of pfM of GSE} $$
		Note that the Lebesgue measure $\mathcal L({A_n})$ is  $O\left(n^{-6/5}\right)$.  Recall the sine kernel in \eqref{sinekerneldef},  if we further apply the Taylor expansion 
		$$
		K_{\sin}(t)=1- \frac{\pi^2t^2}{6}+ \frac{\pi^4t^4}{120}+O\left(t^6\right) \mbox{ as } t\to 0,$$ 
		we will complete the proof of the lemma as follows, 
		\begin{align*}
			&\lim_{n\to\infty}	\prod_{i=1}^{k}\int_{\lambda_i+A_n}\pfa  M_{i,i}\mathrm{d}x_i
			\\
			=& \lim_{n\to\infty}  \prod_{i=1}^{k}\left[\int_{n^{-1/5}\rho_{\semi}(\lambda_i)A} n\rho_{\semi}(\lambda_i)\left(
			\dfrac{16\pi^4u_i^4}{135}+O\left(u_i^5\right)\right)\mathrm{d}u_i+O\left(n^{-1/10}\right) \right] \\
			=&\left(\int_{A} \dfrac{u^4}{540\pi^2}\mathrm{d}u\right)^k \prod_{i=1}^{k} \left( 2\pi\rho_{\semi}(\lambda_i)\right)^6. 
		\end{align*}
	\end{proof}

	\subsection{Two rounds of transformations}\label{subsub:main}
	In order to bound the off-diagonal part $\pfaO \M_{4k}$, we need to bound all its entries. Note that Pfaffian is invariant under the congruent transformations, 
	we can perform the following \emph{Round 1 transformation}  to reduce the order of the entries of $\M_{4k}$ without changing its Pfaffian.
	\begin{itemize}
		\item[-] Calculate $(\lambda_{i+1}-x_{i+1})$ times the $(4i+2)$-th row, and add the result to the $(4i+1)$-th row for $i=0,...,k-1$. Then perform the same operations to the columns. 
		
		\item[-] Then subtract the $(4i+1)$-th row from the $(4i+3)$-th row, and subtract the $(4i+2)$-th row from the $(4i+4)$-th row for $i=0,...,k-1$. Then  perform the same operations to the columns. 
	\end{itemize}
	
	We shall use a superscript $(R)$ to denote the matrix after Round 1 transformation. 	Now we have,  
	$$\begin{aligned}
		\pfa \M_{4k}&=\pfa \M_{4k}^{(R)} \\& = \sum_{\sigma\in \mathcal{P}_{4k}}\mathrm{sgn}(\sigma)
		\M_{4k}^{(R)}(\sigma_1,\sigma_2)\M_{4k}^{(R)}(\sigma_3,\sigma_4)\cdots \M_{4k}^{(R)}(\sigma_{4k-1},\sigma_{4k}).\end{aligned}
	\label{pfaffian of M2knew}
	$$
	Since off-diagonal $4\times 4$ blocks have no influences on diagonal $4\times 4$ blocks during the transformation, we have
	$$
	\pfaD \M_{4k}=\prod_{i=1}^{k}\pfa  M_{i,i}=\prod_{i=1}^{k}\pfa  M^{(R)}_{i,i}=\pfaD \M_{4k}^{(R)}.
	$$
	Consequently,
	$$\label{pfaoid1}
	\pfaO \M_{4k}= \pfa \M_{4k}-\pfaD \M_{4k}= \pfa \M^{(R)}_{4k}-\pfaD \M^{(R)}_{4k}= \pfaO \M^{(R)}_{4k}.
	$$
	As will be clear later, it will be more convenient to bound $\pfaO \M_{4k}$ if we  make the following \emph{Round 2   transformation}  for $\M_{}^{(R)}$: by multiplying certain factors, we define 
	\begin{align}
		\M_{4k}^{(F)}(i,j)=: 
		\left\{\begin{aligned}
			n^{-6/5}\M_{4k}^{(R)}(i,j),&\quad &&\text{for $i,j$ both even,}\\
			n^{6/5}\M_{4k}^{(R)}(i,j),&\quad &&\text{for $i,j$ both odd,}\\
			\M_{4k}^{(R)}(i,j), &\quad &&\text{for all other $i,j$.}
		\end{aligned}\right.
		\label{operation of pfaffian}
	\end{align}
	For any $\sigma\in \mathcal{P}_{4k}$, 
	we have
	$$\begin{aligned}
		&|\{1\leq \ell \leq 2k : \sigma_{2\ell-1} \mbox{ and } \sigma_{2\ell} \mbox{ are both odd}\}|\\=&|\{1\leq \ell \leq 2k : \sigma_{2\ell-1} \mbox{ and } \sigma_{2\ell} \mbox{ are both even}\}|.
	\end{aligned}$$
	And thus $$\begin{aligned}
		&\M_{4k}^{(R)}(\sigma_1,\sigma_2)\M_{4k}^{(R)}(\sigma_3,\sigma_4)\cdots \M_{4k}^{(R)}(\sigma_{4k-1},\sigma_{4k})\\=&
		\M_{4k}^{(F)}(\sigma_1,\sigma_2)\M_{4k}^{(F)}(\sigma_3,\sigma_4)\cdots \M_{4k}^{(F)}(\sigma_{4k-1},\sigma_{4k}). 
	\end{aligned} $$
	This further implies that
	\begin{equation}\label{pfaoid2}
		\pfaD \M_{4k}^{(F)}=\pfaD \M_{4k}^{(R)}=\pfaD \M_{4k}^{},  \quad	 \pfaO \M_{4k}^{(F)}=\pfaO \M_{4k}^{(R)}=\pfaO \M_{4k}^{}.
	\end{equation} 
	%Combining \eqref{pfaoid1} and \eqref{pfaoid2}, we get
	%\begin{equation}
	%\pfaO \M_{}=\pfaO \M^{(F)}_{}.
	%\end{equation}
	Hence, it suffices to  bound $\M^{(F)}_{4k}$. For this purpose we need the following lemma, which will be used to control the integral of $\pfaO \M_{4k}$ in Subsection \ref{sssec:offd}. We first define the following subset of $I^k$,
	\begin{align} 
		\tilde \Omega_k=: \left\{(\lambda_1,x_1,\ldots, \lambda_k,x_k)\in I^{2k}:
		\begin{aligned}
			\abs{\lambda_i-\lambda_j}>&\log^{-1} n\text{ for } 1\leq i<j\leq k;\\
			x_i\in &\lambda_i+A_n \text{ for } i=1,...,k
		\end{aligned} \right \}.
		\label{domain of rho}
	\end{align}
	Then we have

	\begin{lem}\label{lem:round1}
		For $\lambda_i\in I$ and $x_i\in \lambda_i+A_n$ for $ i=1,...,k$, the diagonal block $ M_{i,i}^{(F)}$ has the upper bound, 
		\begin{align}
			\label{order of diagonal after trans for GSE}
			\abs{M_{i,i}^{(F)}} \lesssim  \begin{bmatrix}
				0 & n & n^{3/5} & n^{3/5}\\
				n &0 & n^{3/5}& n^{3/5}\\
				n^{3/5}& n^{3/5}& 0& n^{1/5} \\
				n^{3/5}& n^{3/5}& n^{1/5} &0
			\end{bmatrix}.
		\end{align}
		On $\tilde\Omega_k$ the off-diagonal part $ M^{(F)}_{i,j}(i<j)$  can be controlled by
		\begin{align}
			\abs{M^{(F)}_{i,j}}\lesssim 
			\begin{bmatrix}
				n^{6/5}\log n & n^{1/2} & n^{3/10}  & 
				n^{3/10} \\
				n^{1/2} & n^{-1/5}\log n & n^{-2/5}\log n & n^{-2/5}\log n \\
				n^{3/10}  & n^{-2/5}\log n &n^{-1/5}\log n&n^{-2/5}\log n \\
				n^{3/10}  & n^{-2/5}\log n & n^{-2/5}\log n & n^{-3/5}\log n 
			\end{bmatrix}.
			\label{2order of good off-diagonal after trans}
		\end{align}

	\end{lem}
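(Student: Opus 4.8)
The plan is to track, entry by entry, how the two rounds of transformations act on the $4\times 4$ blocks of $\M_{4k}$, using the kernel estimates of Lemma \ref{order of multi kernel} and the scaling facts that $x_i - \lambda_i \lesssim n^{-6/5}$ and (on $\tilde\Omega_k$) $|\lambda_i-\lambda_j| > \log^{-1} n$. Recall that $\M_{4k}(i,j) = (JK_n)$-entries evaluated at pairs drawn from $\{\lambda_1,x_1,\dots,\lambda_k,x_k\}$; writing out $JK_n$ from \eqref{kn4exp}, the four entries of each $2\times 2$ piece are $-J_n(x,y)$, $S_n(y,x)$, $-S_n(x,y)$, $V_n(x,y)$ in the appropriate positions. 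So each of the $16$ entries of a given $4\times4$ block $M_{i,j}$ is one of $S_n$, $V_n$, or $J_n$ evaluated at one of the four argument-pairs, and Lemma \ref{order of multi kernel} supplies the raw size of each such quantity with $d = |x-y|$ either of order $n^{-6/5}$ (same index $i$) or of order $\log^{-1} n$ to $O(1)$ (different indices).

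First I would record the effect of Round 1 on a generic block. The row operations are: (a) add $(\lambda_{i+1}-x_{i+1})\times(\text{row }4i{+}2)$ to row $4i{+}1$; (b) subtract row $4i{+}1$ from row $4i{+}3$ and row $4i{+}2$ from row $4i{+}4$; then the mirrored column operations. Operation (a) replaces an $S_n(\cdot,\cdot)$-type entry by $S_n - (\lambda-x)\,V_n$, which is (up to $O(n^{-6/5})$ corrections in the argument) a difference quotient — effectively $S_n(x_i,\cdot) - S_n(\lambda_i,\cdot) \approx (x_i-\lambda_i)\partial S_n$, gaining a factor $n^{-6/5}\cdot n = n^{-1/5}$ over a bare $S_n$; similarly the combination $J_n(\lambda_i,x_i)$ is already $O(\log n)$ and gets reduced to its leading behaviour. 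Operation (b) turns a row/column of genuine differences into a second-order difference, gaining another factor. The key bookkeeping: after Round 1, the odd-odd entry of the diagonal block is the twice-differenced $J_n$, which is $O(n^{-6/5}\cdot\text{something})$, and after Round 2 multiplies by $n^{6/5}$ to give the claimed $n^{1/5}$ in the $(3,3),(3,4),(4,3),(4,4)$ corner — wait, the corner entries are $n^{3/5},n^{3/5},n^{3/5},n^{1/5}$, so one keeps careful track of which entries are differenced once versus twice. I would organize this as: (i) compute $M_{i,i}^{(R)}$ from \eqref{def of kernel 4} and the Taylor/Plancherel–Rotach bounds exactly as in the proof of Lemma \ref{lem:diag pfa} (indeed the entries of $M_{i,i}^{(R)}$ are the very quantities appearing in \eqref{pfaffian of Mii}); (ii) apply \eqref{operation of pfaffian} and read off \eqref{order of diagonal after trans for GSE}.

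For the off-diagonal block bound \eqref{2order of good off-diagonal after trans}, the same transformation template applies but now with $d = |\lambda_i - \lambda_j| \in (\log^{-1}n, O(1))$, so $\min\{1/d, n\} = 1/d \lesssim \log n$, $\min\{1/d, n^2 d\} = 1/d \lesssim \log n$, etc. Thus a bare $S_n$ is $O(\log n + n^{1/2}) = O(n^{1/2})$; a bare $V_n$ is $O(n\log n)$; a bare $J_n = \int_{\lambda_i}^{\lambda_j} S_n$ is $O(\log n)$. Round 1 operation (a) now produces $S_n(x_i,\cdot) - S_n(\lambda_i,\cdot) = (x_i-\lambda_i)\partial_x S_n(\xi,\cdot)$ for some $\xi$ between $\lambda_i$ and $x_i$, which is $\lesssim n^{-6/5}\cdot n\log n = n^{-1/5}\log n$ — a genuine gain — and similarly each subsequent differencing in either rows or columns costs $n^{-6/5}$ in amplitude and gains $n$ (one derivative) or, for the $V_n$-containing entry, $n^{-6/5}\cdot n^2 = n^{4/5}$ per column-difference via \eqref{fnde}–\eqref{fn2de}; one then multiplies through by the Round 2 factors $n^{\pm 6/5}$. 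Propagating these through all $16$ positions and simplifying the powers gives the stated matrix; I would present this as a single table with a one-line justification for each of the (essentially four distinct) entry types.

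The main obstacle is purely combinatorial accounting rather than analytic: because Round 1 mixes rows $4i{+}1,\dots,4i{+}4$ among themselves and then does the mirror on columns, a single entry $M^{(R)}_{i,j}(a,b)$ is a \emph{linear combination} of up to four original entries (from the two row operations, then the two column operations, hence cross terms), and one must verify that \emph{every} term in that combination satisfies the claimed bound — in particular that the potentially dangerous cross-terms, where a row-difference meets a column-difference on a $V_n$ entry, do not exceed the budget; this is where estimates \eqref{fnde} and \eqref{fn2de} (second derivatives of $V_n$) are essential. The off-diagonal estimate additionally relies on the integral bound \eqref{intbd} to control the $J_n$-type entry $M^{(F)}_{i,j}(1,1) = n^{6/5}\,O(\log n)$. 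I do not expect to need anything beyond Lemma \ref{order of multi kernel}, the restriction to $\tilde\Omega_k$, and elementary bookkeeping.
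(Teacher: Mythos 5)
Your plan is the same as the paper's: bound the entries of $M_{i,j}^{(R)}$ directly from Lemma \ref{order of multi kernel} using mean-value/Taylor arguments on the differences produced by Round 1, then apply the Round 2 scaling \eqref{operation of pfaffian}. The off-diagonal analysis you sketch (bare $S_n=O(n^{1/2})$, bare $J_n=O(\log n)$ via \eqref{intbd}, each first-order difference gaining a factor $n^{-6/5}\cdot(\text{derivative order})$, second-order $V_n$-differences controlled by \eqref{fn2de}) is exactly what the paper does to reach \eqref{2order of good off-diagonal after trans}.

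However, there is a genuine gap in your account of the diagonal block, and it is one that would break the downstream combinatorics if left as a heuristic. Your rule of thumb "each differencing gains $n^{-1/5}$" gives only $n^{3/5}$ for the $(3,4)$-entry, not the claimed $n^{1/5}$. Concretely, that entry is
$$S_n(x_i,x_i)+S_n(\lambda_i,\lambda_i)-S_n(\lambda_i,x_i)-S_n(x_i,\lambda_i)+(x_i-\lambda_i)V_n(\lambda_i,x_i),$$
and naive two-fold mean-value bookkeeping (two factors of $n^{-6/5}$ against $\partial^2 V_n\lesssim n^3$) yields $n^{3/5}$. To reach $n^{1/5}$ the paper has to invoke the scaled sine-kernel limit (Lemma \ref{Convergence of the rescaling kernel}) and exploit the exact cancellation $1-K_{\sin}(2u)+\partial_u K_{\sin}(2u)=\frac{2u^4}{15}+O(u^6)$ --- the $u^2$ term vanishes, which is one order better than generic second-order differencing. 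This sharper bound is not a luxury: if one carries $\frac{3}{5}|D_4|$ instead of $\frac{1}{5}|D_4|$ into $\ord_1$ in \eqref{ord1}, then, e.g., the permutation on $k=3$ blocks with pairs $\{1,5\},\{2,6\},\{3,4\},\{7,8\},\{9,10\},\{11,12\}$ gives $|D_1|=1,|D_4|=3,|O_1|=|O_{4,1}|=1$ and $\ord_1'=\tfrac{19}{5}>\tfrac{18}{5}=\tfrac{6k}{5}$, so Lemma \ref{lem:ord1bd} fails. You do say "compute $M_{i,i}^{(R)}$ ... exactly as in the proof of Lemma \ref{lem:diag pfa}", which, if carried out literally, captures this cancellation; but the preceding heuristic description is misleading and would lead to the wrong exponent if applied mechanically. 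So: the approach is correct, but the $(3,4)$-entry must be handled via the explicit Taylor expansion of $K_{\sin}$ rather than crude derivative counting, and you should say so explicitly.

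A smaller slip: the $n^{1/5}$ entry is at position $(3,4)$ (odd--even, untouched by Round 2), not an "odd-odd" corner entry; the odd-odd position $(1,3)$ is the $J_n$-related quantity, which is $O(n^{-3/5})$ before Round 2 and $O(n^{3/5})$ after.
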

	
	\begin{proof} 	Let $M_{i,j,11}^{(R)},M_{i,j,12}^{(R)},M_{i,j,21}^{(R)},M_{i,j,22}^{(R)}$ be the upper left, upper right, lower left and lower right $2\times 2$ blocks of $ M^{(R)}_{i,j}$ after the Round 1 transformation, respectively.  %The same notations are applied to  $ M^{(F)}_{i,j}$ after the Round 2 transformation,
		
		% Part (i):  \textbf{bounds for  diagonal blocks}.
		We first prove \eqref{order of diagonal after trans for GSE}. Recall \eqref{4k4k}, the original on-diagonal blocks are
		\begin{align*}
			{M}_{i,i}= \begin{bmatrix}
				0 &S_n(\lambda_i,\lambda_i) & J_n(\lambda_i,x_i)& S_n(x_i,\lambda_i)\\
				-S_n(\lambda_i,\lambda_i) &0 & -S_n(\lambda_i,x_i)& -V_n(\lambda_i,x_i)\\
				J_n(x_i,\lambda_i)& S_n(\lambda_i,x_i)& 0&S_n(x_i,x_i) \\
				-S_n(x_i,\lambda_i)& -V_n(x_i,\lambda_i)&-S_n(x_i,x_i)&0
			\end{bmatrix}
		\end{align*}
		After the Round 1 transformation, the (1,1)-entry of $M_{i,i,12}^{(R)}$ is 
		\begin{align*}
			\int_{x_i}^{\lambda_i}S_n(t,x_i)\mathrm{d}t - (\lambda_i-x_i) S_n(\lambda_i,x_i) = & (\lambda_i-x_i) S_n(\xi,x_i) - (\lambda_i-x_i) S_n(\lambda_i,x_i) \\
			= & -(\lambda_i-x_i)(\lambda_i-\xi) \partial_\gamma S_n(\gamma,x_i),
		\end{align*}
		where $\xi\in [\lambda_i,x_i]$ and $\gamma\in [\lambda_i,\xi]$. By \eqref{s4derough} and \eqref{s4derough2} we have 
		$$
		\sup_{\abs{x-y}\leq \sup(A_n)}\abs{\partial_x S_n(x,y)}= O\left(n^{9/5}\right) \quad\text{and}\quad \sup_{\abs{x-y}\leq \sup(A_n)}\abs{V_n(x,y)} = O\left(n^{9/5}\right).
		\label{order of S}
		$$ This  implies that the (1,1)-entry of $M_{i,i,12}^{(R)}$  is of order $O\left(n^{-3/5}\right) $.  The (1,2)-entry of $M_{i,i,12}^{(R)}$ is given by
		\begin{align*}
			S_n(x_i,\lambda_i) - S_n(\lambda_i,\lambda_i) - (\lambda_i-x_i)V_n(\lambda_i,x_i) = (x_i-\lambda_i) \partial_\xi S_n(\xi,\lambda_i)+O\left(n^{3/5}\right),
		\end{align*}
		which is of order $O\left(n^{3/5}\right)$. Similar analysis works for the (2,1) and (2,2) entries of $M_{i,i}^{(R)}$, and we
		get the estimate
		\begin{align*}
			\abs{M^{(R)}_{i,i,12}} \lesssim \begin{bmatrix}
				n^{-3/5}   & n^{3/5}\\
				n^{3/5} & n^{9/5} 
			\end{bmatrix}.
		\end{align*}
		Due to the anti-symmetry of $M_{i,i}^{(R)}$, $M^{(R)}_{i,i,21}$ has the same order estimate as above.  For $M^{(R)}_{i,i,22}$, its (1,2)-entry is
		\begin{align}
			&S_n(x_i,x_i)+S_n(\lambda_i,\lambda_i)-S_n(\lambda_i,x_i)-S_n(x_i,\lambda_i)+(x_i-\lambda_i)V_n(\lambda_i,x_i)\notag\\
			=&-\int_{\lambda_i}^{x_i}V_n(x_i,t)\mathrm{d}t+\int_{\lambda_i}^{x_i}V_n(\lambda_i,t)\mathrm{d}t+(x_i-\lambda_i)V_n(\lambda_i,x_i).
			\label{M22}
		\end{align}
		By the estimate (\ref{convergence of D}), we have
		\begin{align*}
			\left|V_n(\lambda_i,x_i) - n^2\rho^2_{\semi}(\lambda_i) \frac{\mathrm{d}}{\mathrm{d}u}[K_{\sin}(2u)]\right| = O\left(n\right),
		\end{align*}
		where $u:=n\rho_{\semi}(\lambda_i)(\lambda_i-x_i)\lesssim n^{-1/5}$.
		Then (\ref{M22}) can be written as 
		\begin{align*}
			&-2n\rho_{\semi}(\lambda_i)\int_{0}^{u} 
			\left(   \frac{\mathrm{d}}{\mathrm{d}t}[K_{\sin}(2t)]\right)\mathrm{d}t
			+  n\rho_{\semi}(\lambda_i)u
			\frac{\mathrm{d}}{\mathrm{d}u}[K_{\sin}(2u)]+ O\left(n^{-1/5}\right)\\
			=&2n\rho_{\semi}(\lambda_i)\left(1-K_{\sin}(2u)+\frac{\mathrm{d}}{\mathrm{d}u}[K_{\sin}(2u)]\right) + O\left(n^{-1/5}\right) \\
			=&\dfrac{4n\rho_{\semi}(\lambda_i)u^4}{15}+O\left(n^{-1/5}\right).
		\end{align*}
		Hence we get
		\begin{align*}
			\abs{M_{i,i,22}^{(R)}} \lesssim 
			\begin{bmatrix}
				0   & n^{1/5} \\
				n^{1/5} & 0
			\end{bmatrix}.
		\end{align*}
		Combining above estimates,   we have the following bound for the  diagonal block $M_{i,i}^{(R)}$ $(1\leq i\leq k)$  after the Round 1 transformation,
		\begin{align}
			\abs{M_{i,i}^{(R)}} \lesssim 
			\label{order of diagonal}
			\begin{bmatrix}
				0 &n & n^{-3/5}& n^{3/5}\\
				n &0 & n^{3/5}& n^{9/5}\\
				n^{-3/5}& n^{3/5}& 0&n^{1/5} \\
				n^{3/5}& n^{9/5}& n^{1/5}&0
			\end{bmatrix}.
		\end{align}
		Therefore, \eqref{order of diagonal after trans for GSE} follows from \eqref{order of diagonal}
		and the definition of Round 2 transformation \eqref{operation of pfaffian}
		
		Now we prove \eqref{2order of good off-diagonal after trans}. 
		Recall \eqref{4k4k} again, the original off-diagonal blocks are
		\begin{align*}
			M_{i,j} = \begin{bmatrix}
				J^{}_n(\lambda_i,\lambda_j)&S^{}_n(\lambda_j,\lambda_i) & J^{}_n(\lambda_i,x_j)& S^{}_n(x_j,\lambda_i)\\
				-S^{}_n(\lambda_i,\lambda_j) &-V^{}_n(\lambda_i,\lambda_j) & -S^{}_n(\lambda_i,x_j)& -V^{}_n(\lambda_i,x_j)\\
				J^{}_n(x_i, \lambda_j)& S^{}_n(\lambda_j,x_i)&J^{}_n(x_i, x_j) &S^{}_n(x_j,x_i) \\
				-S^{}_n(x_i,\lambda_j)&-V^{}_n(x_i,\lambda_j)&-S^{}_n(x_i,x_j)& -V^{}_n(x_i,x_j)
			\end{bmatrix}.
		\end{align*}% But there are two cases we need to consider separately.
		% We say $\bar M_{i,j}^{(\beta)},i< j$ is a "good" block, if 
		% $$\lambda_i<x_i<\lambda_j<x_j, \quad\text{or}\quad\lambda_j<x_j<\lambda_i<x_i.$$
		% Otherwise it is called a "bad" block. The reason for this separation is the last term $\varepsilon(x-y)$ of $J^{(1)}(x,y)$. This means that $\bar M_{i,j}^{(4)},i< j$ has the same order in both cases, but the "bad" $\bar M_{i,j}^{(1)},i< j$ has the different result.
		% \begin{itemize}
		%     \item[-] Order of the "good" off-diagonal blocks
		% \end{itemize}
		We consider the upper left $2\times 2$ block $M_{i,j,11}^{(R)}$ of $M^{(R)}_{i,j}$ after the Round 1 transformation.
		Its entries are  
		$$
		\begin{aligned}
			M_{i,j,11}^{(R)}(1,1)=&
			J_n^{}(\lambda_i,\lambda_j)-(\lambda_i-x_i)S_n^{}(\lambda_i,\lambda_j)+(\lambda_j-x_j) S_n^{}(\lambda_j,\lambda_i)\\
			&-(\lambda_j-x_j)(\lambda_i-x_i)V_n^{}(\lambda_i,\lambda_j),\\
			M_{i,j,11}^{(R)}(1,2)=&S_n^{}(\lambda_j,\lambda_i)-(\lambda_i-x_i)V_n^{}(\lambda_i,\lambda_j), \\
			M_{i,j,11}^{(R)}(2,1)=&-(\lambda_j-x_j)V_n^{}(\lambda_i,\lambda_j)-S_n^{}(\lambda_i,\lambda_j), \\
			M_{i,j,11}^{(R)}(2,2)=&-V_n^{}(\lambda_i,\lambda_j).
		\end{aligned}
		$$
		%	Let $d_{i,j} := \abs{\lambda_i-\lambda_j}$, and
		%	$$\alpha_{i,j}:=\min\left\{\abs{\lambda_j-\lambda_i}, \abs{\lambda_j-x_i},\abs{x_j-\lambda_i}, \abs{x_i-x_j}\right\}.$$
		The requirement that $x_i\in \lambda_i+A_n$ implies $$\abs{\lambda_i-x_i}\leq \sup(A_n)(=\sup\{x:x\in A_n\})\lesssim n^{-6/5}.$$ In addition,
		using \eqref{domain of rho} where  $(\lambda_1,\ldots, \lambda_k) \in \tilde\Omega_{k}$, for all $i\neq j$, we have
		$$
		\min\left\{\abs{\lambda_j-\lambda_i}, \abs{\lambda_j-x_i},\abs{x_j-\lambda_i}, \abs{x_i-x_j}\right\} \geq \log^{-1} n-2\sup(A_n) \geq \frac{1}{2}\log^{-1} n . 
		$$ %	$\alpha_{i,j}\leq d_{i,j}\leq 2\alpha_{i,j}$. %This shows that $\alpha_{i,j}$ and $d_{i,j}$ are equivalent as $n\rightarrow\infty$. Therefore, we will use $d_{i,j}$ as a uniform representation of the four terms of $\alpha_{i,j}$.
		Then by the estimates (\ref{s4rough}), (\ref{s4derough2}) and \eqref{intbd},
		we get
		\begin{align*}
			\abs{M_{i,j,11}^{(R)}} \lesssim  \begin{bmatrix}
				\log n  & n^{1/2}  \\
				n^{1/2} & n\log n 
			\end{bmatrix}.
		\end{align*}
		%	Here we have used the fact that so that $\min\{n, 1/\log^{-1}n\}=\log n$ 
		Other parts of $M^{(R)}_{i,j}$ can be bounded in  the same manner; therefore, we omit the details. This leads us to the final estimate,  \begin{align}
			\abs{M^{(R)}_{i,j}} \lesssim  
			\begin{bmatrix}
				\log n & n^{1/2} & n^{-9/10}  & 
				n^{3/10} \\
				n^{1/2} & n\log n & n^{-2/5 }\log n & n^{4/5}\log n \\
				n^{-9/10}  & n^{-2/5}\log n &n^{-7/5}\log n & n^{-2/5}\log n \\
				n^{3/10}  & n^{4/5}\log n & n^{-2/5}\log n& n^{3/5}\log n 
			\end{bmatrix}.
			\label{2order of good off-diagonal}
		\end{align}
		Now  \eqref{2order of good off-diagonal after trans} follows from \eqref{2order of good off-diagonal} and the definition of the Round 2 transformation \eqref{operation of pfaffian}.

	\end{proof} 
	
	% \begin{itemize}
	%     \item[-] Order of the "bad" off-diagonal blocks 
	% \end{itemize}
	
	% In fact, the "bad" off-diagonal block $\bar M^{(4)}_{i,j}, i<j$ has the same result as \eqref{2order of good off-diagonal}. For the "bad" $\bar M^{(1)}_{i,j}, i<j$, by the definition, 
	% \begin{align}
	%     \lambda_i\leq \lambda_j\leq x_i \quad \text{or} \quad \lambda_j\leq \lambda_i\leq x_j.
	%     \label{bad order}
	% \end{align}
	% Then $\abs{\lambda_i-\lambda_j} \in A^{(\beta)}_n$, and $\d_{i,j} = n$. So we can simplify the term $O\left(q_{i,j}\right)$ in \eqref{2order of good off-diagonal} to $O(n)$.
	% In addition, because of the function $\epsilon(x-y)$ in $J^{(1)}_n(x,y)$ and the "bad" order \eqref{bad order},
	% we can only bound
	% $M_{i,j,12}^{(1)}(1,1)$, $M_{i,j,21}^{(1)}(1,1)$ and $M_{i,j,22}^{(1)}(1,1)$ by $O(1)$. 
	
	% Hence for the "bad" block $\bar M^{(1)}_{i,j}, i<j$, we have the estimation
	% \begin{align}
	%     \left(\begin{array}{cccc}
	%       O\left(\log n\right) & O(n) & O(1) & 
	%       O\left(n^{1/2}\right)\\
	%     O(n) & O(n^2) & O(1) & O\left(n^{3/2}\right) \\
	%     O(1) & O(1)&O(1)&O(1) \\
	%     O\left(n^{1/2}\right) & O\left(n^{3/2}\right)& O(1)& O(n)
	%     \end{array}\right).
	%     \label{2order of bed off-diagonal}
	% \end{align}
	
	% 	Then the uniform convergence \eqref{con of diag pfaffian} holds for $\beta=1,4$.
	
	\subsection{Off-diagonal part $\pfaO \M_{4k}$}
	\label{sssec:offd}
	%Now we consider the sum of the terms containing off-diagonal elements of $\M_{4k}$ and prove 
	Next, we prove the following lemma, which indicates that the contribution from the off-diagonal blocks is negligible.		\begin{lem}\label{lem:bdoffdiag}    
		Uniformly in $(\lambda_1,...,\lambda_k)\in \Omega^k$, we have 
		\begin{equation*} 
			\lim_{n\to\infty}\int_{\lambda_1+A_n}\cdots \int_{\lambda_k+A_n} \abs{\pfaO \M_{4k}}\mathrm{d}x_1\cdots \mathrm{d}x_k =0.
		\end{equation*}
	\end{lem}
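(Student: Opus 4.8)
The plan is to reduce the Pfaffian of the off-diagonal part to a sum over permutations and then bound each term using the entrywise estimates for $\M_{4k}^{(F)}$ established in Lemma~\ref{lem:round1}, exploiting the identity $\pfaO\M_{4k}=\pfaO\M_{4k}^{(F)}$ from \eqref{pfaoid2}. Concretely, I would first write
\[
\abs{\pfaO\M_{4k}}=\abs{\pfaO\M_{4k}^{(F)}}\leq\sum_{\sigma\in\mathcal P_{4k}\backslash\mathcal D_{4k}}\prod_{\ell=1}^{2k}\abs{\M_{4k}^{(F)}(\sigma_{2\ell-1},\sigma_{2\ell})},
\]
and then organize the sum according to the ``type'' of the matching induced by $\sigma$, i.e., how many pairs $\{\sigma_{2\ell-1},\sigma_{2\ell}\}$ lie inside a single $4\times4$ block (``diagonal pairs'') versus straddle two different blocks (``off-diagonal pairs''). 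Since $\sigma\notin\mathcal D_{4k}$, at least one pair straddles two blocks; by a parity/counting argument (each block contributes $4$ indices, so the bipartite-like structure forces straddling pairs to come in a balanced way) the number of straddling pairs is at least two.

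The key step is the combinatorial bound on the contribution of each type, which I expect to be carried out in an auxiliary lemma of the kind referenced as Lemma~\ref{lem:ord1bd}. For a fixed pattern of which block-pairs are connected, one multiplies the corresponding entry bounds from \eqref{order of diagonal after trans for GSE} and \eqref{2order of good off-diagonal after trans}. The crucial observation is that replacing one diagonal pair in block $i$ plus one diagonal pair in block $j$ (contributing at most, say, $n\cdot n$ from the $(1,2)$-type entries) by two straddling pairs between blocks $i$ and $j$ costs a factor that is a negative power of $n$ up to logarithmic factors: the largest off-diagonal entries in \eqref{2order of good off-diagonal after trans} are the $(1,1)$-entry of size $n^{6/5}\log n$ and the $(1,2)$/$(2,1)$-entries of size $n^{1/2}$, but the matching/sign constraints (an index can be used only once, the $(1,1)$-slot can be occupied at most once per block-pair in each of the two crossing pairs) prevent both crossing entries from simultaneously being the largest, so the product of the two crossing entries is $o(n^2)$ after accounting for the entries they displace. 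Summing over the boundedly many block-pair patterns for fixed $k$, and over the $O(1)$ choices of internal slots, gives $\abs{\pfaO\M_{4k}}=O(n^{2k-\delta}\log^{C}n)$ for some $\delta>0$ uniformly on $\tilde\Omega_k$.

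Finally I would integrate: each of the $k$ integrations $\int_{\lambda_i+A_n}dx_i$ contributes a factor $\mathcal L(A_n)=O(n^{-6/5})$, so
\[
\int_{\lambda_1+A_n}\cdots\int_{\lambda_k+A_n}\abs{\pfaO\M_{4k}}\,\mathrm dx_1\cdots\mathrm dx_k\lesssim n^{-6k/5}\cdot n^{2k-\delta}\log^{C}n = n^{4k/5-\delta}\log^C n,
\]
which must be compared with the leading order $L_{k,n}\sim$ const $\cdot\,n^{2k}\cdot n^{-6k/5}= $ const $\cdot\, n^{4k/5}$ from Lemma~\ref{lem:diag pfa}; the gain of $n^{-\delta}$ (and of $\log n$-powers being harmless against it) forces the limit to be $0$. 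The main obstacle is the bookkeeping in the middle step: one must verify that every matching that is not purely diagonal genuinely loses a power of $n$ — in particular the dangerous $(1,1)$-entries of size $n^{6/5}\log n$, which are individually \emph{larger} than the $n$-sized diagonal entries, do not accumulate, and this requires carefully tracking which rows/columns (odd vs.\ even, and which $4$-block slot) are consumed by a straddling pair and checking that the Round~2 rescaling in \eqref{operation of pfaffian} has indeed balanced things so that the ``worst case'' pattern still decays. Handling the edge of $\Omega_k$ (where $\abs{\lambda_i-\lambda_j}$ is only $\gtrsim\log^{-1}n$) is why the logarithmic factors appear, but since they are dominated by any fixed negative power of $n$ this does not affect the conclusion.
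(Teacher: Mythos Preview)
Your overall strategy coincides with the paper's: pass to $\M_{4k}^{(F)}$ via \eqref{pfaoid2}, bound each summand in $\pfaO\M_{4k}^{(F)}$ by the product of the entrywise bounds in Lemma~\ref{lem:round1}, and invoke Lemma~\ref{lem:ord1bd} for the combinatorics. The gap is in your arithmetic on the exponents, and it is fatal as written.

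First, the correct baseline is $n^{6k/5}$, not $n^{2k}$. In a single $4\times4$ diagonal block there are four indices and hence only \emph{two} pairs; by \eqref{order of diagonal after trans for GSE} the largest possible product is the $(1,2)$-entry times the $(3,4)$-entry, i.e.\ $n\cdot n^{1/5}=n^{6/5}$, not ``$n\cdot n$''. Over $k$ blocks this gives at most $n^{6k/5}$, and Lemma~\ref{lem:ord1bd} asserts precisely that for every $\sigma\in\mathcal P_{4k}\setminus\mathcal D_{4k}$ the product of the $\U_{4k}$-bounds is $\lesssim(\log^Cn)\,n^{\ord_1}$ with $\ord_1<6k/5$. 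Your claim $\abs{\pfaO\M_{4k}}=O(n^{2k-\delta}\log^Cn)$ is therefore far too weak (indeed trivially true), and after integrating you obtain $n^{4k/5-\delta}\log^Cn$, which does \emph{not} tend to $0$ for any $k\ge1$ unless $\delta>4k/5$ --- a bound your heuristic does not supply.

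Second, you misread Lemma~\ref{lem:diag pfa}: $L_{k,n,D}$ converges to a finite constant, not to something of order $n^{4k/5}$. The statement of Lemma~\ref{lem:bdoffdiag} asks for the integral to go to $0$ absolutely, not merely to be $o(L_{k,n})$; these coincide only because $L_{k,n}=O(1)$. With the sharp threshold $\ord_1<6k/5$ from Lemma~\ref{lem:ord1bd}, the integral is bounded by $(\log^Cn)\,n^{\ord_1-6k/5}\to0$, exactly as the paper argues. So the fix is simply to replace ``$2k$'' by ``$6k/5$'' throughout and drop the comparison with $L_{k,n}$; after that your outline becomes the paper's proof.
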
  %	Given any $\sigma\in \mathcal{P}_{4k}$, define 
	%	\begin{align}
	%		\Lambda_{\sigma}^{(\beta)}:=\left\{a^{(\beta)}_{\sigma_{2i-1}\sigma_{2i}}:i=1,2,...,2k\right\}.
	%		\label{definition of sigma}
	%	\end{align}
	%	There are two subsets of $\Lambda^{(\beta)}_{\sigma}$ we are more concerned,
	%	\begin{align*}
	%		\Lambda^{(\beta)}_{\sigma,\text{odd}}:=\left\{a^{(\beta)}_{i,j}\in\Lambda^{(\beta)}_{\sigma}:i \text{ and } j \text{ both odd}\right\},\quad
	%		\Lambda^{(\beta)}_{\sigma,\text{even}}:=\left\{a^{(\beta)}_{i,j}\in\Lambda^{(\beta)}_{\sigma}:i \text{ and } j \text{ both even}\right\}.
	%	\end{align*}
	%Observe that this will not change the value of any term of (\ref{pfaffian of off-diag}), since $\abs{\Lambda^{(\beta)}_{\sigma,\text{odd}}} = \abs{\Lambda^{(\beta)}_{\sigma,\text{even}}}$ for any $\sigma\in \mathcal{P}_{4k}$.  After this transform  matrix $\M_{4k}^{(R)}$ is changed into a new matrix which shall be denoted by $\M_{4k}^{(F)}$.
	%	For any $(\lambda_i,...,\lambda_k)\in \Omega_{k,n}$, $\abs{\lambda_i-\lambda_j}>n^{-0.1}>2\sup\left(A^{(\beta)}_n\right)$, the estimations \eqref{order of diagonal}, \eqref{order of diagonal of GOE} and \eqref{2order of good off-diagonal} still hold true.
	%	With the estimations above, we will prove a stronger estimation than (\ref{cons of g}).
	We first construct a $4k\times 4k$ matrix $\U_{4k}$  in the block form 
	$$
	\U_{4k}=\left(U_{i,j}\right)_{1\leq i, j\leq k},
	$$
	where  $U_{i,i}$ and $U_{i,j}$  are   $4\times 4$ upper bound matrices   given by  the right-hand side of \eqref{order of diagonal after trans for GSE} and \eqref{2order of good off-diagonal after trans}, respectively; that is, $$|\M_{4k}^{(F)}(i,j)|\lesssim \U_{4k}(i,j).$$  For example, 
	$\U_{4k}(i,j)=n^{6/5}\log n$ if  $i\equiv j\equiv 1$ (mod 4) and  $i\neq j.$ 
	We set
	\begin{equation}\label{def of U}
		U_{4k}\left(\sigma\right):=\prod_{i=1}^{2k}  \U_{4k}(\sigma_{2i-1},\sigma_{2i}). 
	\end{equation} Let $\mathbf{\Lambda}_{4k}$ be a 0-1  matrix of size $4k\times 4k$ corresponding to $\sigma$:  
	\begin{equation}\label{defSigma}
		\mathbf{\Lambda}_{4k}(i,j)=1   \Leftrightarrow  \exists \, \ell  \mbox{ s.t. } (\sigma_{2\ell-1}, \sigma_{2\ell})=(i,j).
	\end{equation}
	We rewrite $\mathbf{\Lambda}_{4k}=(\Lambda_{i,j})_{1\leq i, j\leq k}$ where each $\Lambda_{i,j}$ is a $4\times 4$ block.  We now  partition the non-zero entries of $\mathbf{\Lambda}_{4k}$ into diagonal regimes (denoted by a letter `$D$') and off-diagonal regimes (denoted by a letter `$O$'). The partition is performed according to the growth order of $n$ as described in \eqref{order of diagonal after trans for GSE} and \eqref{2order of good off-diagonal after trans}. The diagonal part contains 4 regimes:
	\begin{align*}
		D_1:=&\left\{(i,j)\in [4k]\times [4k]:
		\lceil i/4\rceil = 
		\lceil j/4\rceil, \mathbf{\Lambda}_{4k}(i,j)=1, i\equiv 1(\mod 4),\,j\equiv 2(\mod 4)\right\},\\
		D_2:=&\left\{(i,j)\in [4k]\times [4k]:
		\lceil i/4\rceil = 
		\lceil j/4\rceil, \mathbf{\Lambda}_{4k}(i,j)=1, i\equiv 1(\mod 4),\,j\equiv 3,4(\mod 4)\right\},\\
		D_3:=&\left\{(i,j)\in [4k]\times [4k]:
		\lceil i/4\rceil = 
		\lceil j/4\rceil, \mathbf{\Lambda}_{4k}(i,j)=1, i\equiv 2(\mod 4),\,j\equiv 3,4(\mod 4) \right\},\\
		D_4:=&\left\{(i,j)\in [4k]\times [4k]:
		\lceil i/4\rceil = 
		\lceil j/4\rceil, \mathbf{\Lambda}_{4k}(i,j)=1, i\equiv 3(\mod 4)\right\}, 
	\end{align*}
	where $[4k]:=\{1, 2,..., 4k\}$.

	Figure \ref{fig1} illustrates the partition for each $4\times 4$ diagonal  blocks $\Lambda_{i,i}, i=1,.., k$, providing the information on  which nonzero entries belong to each $D_i$. For example, for the block $\Lambda_{1,1}$,  $(1,2)\in D_1$ if $\mathbf{\Lambda}_{4k}(1,2)=1$;  $(1,3)\in D_2$ if $\mathbf{\Lambda}_{4k}(1,3)=1$, etc. Note that we only consider and partition entries in the upper triangle of $\mathbf{\Lambda}_{4k}$, since the entries in the lower triangle  of $\mathbf{\Lambda}_{4k}$  are all 0. Figure \ref{fig2} shows the partition information for $4\times 4$ off-diagonal   blocks in the upper triangle, defined similarly to the $D_i$'s.  To avoid redundancy, their definitions are omitted. We also let
	$$O_4=O_{4,1}\cup O_{4,2}, \,\, O_5=O_{5,1}\cup O_{5,2}.$$
	\begin{center}

		\begin{figure}[!htbp]
			\centering
			\begin{minipage}{.45\textwidth}
				\centering
				\begin{tikzpicture}
					% 绘制矩阵
					\matrix (m) [matrix of math nodes, nodes={minimum size=5mm, anchor=center}, column sep=3mm, row sep=3mm, right delimiter={]}, left delimiter={[}]{
						0 & \phantom{a_{12}} & \phantom{a_{13}} & \phantom{a_{14}} \\
						& 0 & \phantom{a_{23}} & \phantom{a_{24}} \\
						&  & 0 & \phantom{a_{34}} \\
						&  &  & 0 \\
					};
					
					% 绘制方框和标注
					\draw[dashed, line width=0.8pt, rounded corners] ([xshift=-0.9mm,yshift=0.9mm]m-1-2.north west) rectangle ([xshift=0.9mm,yshift=-0.9mm]m-1-2.south east) node[midway] {$D_1$};
					\draw[dashed, line width=0.8pt, rounded corners] ([xshift=-0.9mm,yshift=0.9mm]m-1-3.north west) rectangle ([xshift=0.9mm,yshift=-0.9mm]m-1-4.south east) node[midway] {$D_2$};
					\draw[dashed, line width=0.8pt, rounded corners] ([xshift=-0.9mm,yshift=0.9mm]m-2-3.north west) rectangle ([xshift=0.9mm,yshift=-0.9mm]m-2-4.south east) node[midway] {$D_3$};
					\draw[dashed, line width=0.8pt, rounded corners] ([xshift=-0.9mm,yshift=0.9mm]m-3-4.north west) rectangle ([xshift=0.9mm,yshift=-0.9mm]m-3-4.south east) node[midway] {$D_4$};
				\end{tikzpicture}
				\caption{Partition  of $4\times 4$ diagonal  blocks.}
				\label{fig1}
			\end{minipage}
			
			\begin{minipage}{.45\textwidth}
				\centering
				\begin{tikzpicture}
					% 绘制矩阵
					\matrix (m) [matrix of math nodes, nodes={minimum size=5mm, anchor=center}, column sep=3mm, row sep=3mm, right delimiter={]}, left delimiter={[}]{
						\phantom{a_{11}} & \phantom{a_{12}} & \phantom{a_{13}} & \phantom{a_{14}} \\
						\phantom{a_{21}} & \phantom{a_{22}} & \phantom{a_{23}} & \phantom{a_{24}} \\
						\phantom{a_{31}} & \phantom{a_{32}} & \phantom{a_{33}} & \phantom{a_{34}} \\
						\phantom{a_{41}} & \phantom{a_{42}} & \phantom{a_{43}} & \phantom{a_{44}} \\
					};
					
					% 绘制方框和标注
					\draw[dashed, line width=0.8pt, rounded corners] ([xshift=-0.9mm,yshift=0.9mm]m-1-1.north west) rectangle ([xshift=0.9mm,yshift=-0.9mm]m-1-1.south east) node[midway] {$O_1$};
					\draw[dashed, line width=0.8pt, rounded corners] ([xshift=-0.9mm,yshift=0.9mm]m-1-2.north west) rectangle ([xshift=0.9mm,yshift=-0.9mm]m-1-2.south east) node[midway] {$O_2$};
					\draw[dashed, line width=0.8pt, rounded corners] ([xshift=-0.9mm,yshift=0.9mm]m-2-1.north west) rectangle ([xshift=0.9mm,yshift=-0.9mm]m-2-1.south east) node[midway] {$O_2$};
					\draw[dashed, line width=0.8pt, rounded corners] ([xshift=-0.9mm,yshift=0.9mm]m-1-3.north west) rectangle ([xshift=0.9mm,yshift=-0.9mm]m-1-4.south east) node[midway] {$O_3$};
					\draw[dashed, line width=0.8pt, rounded corners] ([xshift=-0.9mm,yshift=0.9mm]m-3-1.north west) rectangle ([xshift=0.9mm,yshift=-0.9mm]m-4-1.south east) node[midway] {$O_3$};
					% O4,1 和 O4,2
					\draw[dashed, line width=0.8pt, rounded corners] ([xshift=-0.9mm,yshift=0.9mm]m-2-2.north west) rectangle ([xshift=0.9mm,yshift=-0.9mm]m-2-2.south east) node[midway] {$O_{4,1}$};
					\draw[dashed, line width=0.8pt, rounded corners] ([xshift=-0.9mm,yshift=0.9mm]m-3-3.north west) rectangle ([xshift=0.9mm,yshift=-0.9mm]m-3-3.south east) node[midway] {$O_{4,2}$};
					% O5,1
					\draw[dashed, line width=0.8pt, rounded corners] ([xshift=-0.9mm,yshift=0.9mm]m-2-3.north west) rectangle ([xshift=0.9mm,yshift=-0.9mm]m-2-4.south east) node[midway] {$O_{5,1}$};
					\draw[dashed, line width=0.8pt, rounded corners] ([xshift=-0.9mm,yshift=0.9mm]m-3-2.north west) rectangle ([xshift=0.9mm,yshift=-0.9mm]m-4-2.south east) node[midway] {$O_{5,1}$};
					% O5,2
					\draw[dashed, line width=0.8pt, rounded corners] ([xshift=-0.9mm,yshift=0.9mm]m-3-4.north west) rectangle ([xshift=0.9mm,yshift=-0.9mm]m-3-4.south east) node[midway] {$O_{5,2}$};
					\draw[dashed, line width=0.8pt, rounded corners] ([xshift=-0.9mm,yshift=0.9mm]m-4-3.north west) rectangle ([xshift=0.9mm,yshift=-0.9mm]m-4-3.south east) node[midway] {$O_{5,2}$};
					% O6
					\draw[dashed, line width=0.8pt, rounded corners] ([xshift=-0.9mm,yshift=0.9mm]m-4-4.north west) rectangle ([xshift=0.9mm,yshift=-0.9mm]m-4-4.south east) node[midway] {$O_6$};
				\end{tikzpicture}
				\caption{Partition of $4\times 4$ off-diagonal  blocks.}
				\label{fig2}
			\end{minipage}
		\end{figure}
	\end{center}
	Given $\sigma\in \mathcal{P}_{4k}\backslash \mathcal{D}_{4k}$,
	by the definitions of $\U_{4k}$ and $U_{4k}(\sigma)$ and Lemma \ref{lem:round1}, we get
	\begin{equation}\label{usigma1}
		U_{4k}(\sigma)\lesssim  \left(\log^Cn\right) n^{\ord_1}, 
	\end{equation}
	where  
	\begin{equation}
		\label{ord1}
		\begin{split}
			\ord_1:= & |D_1|+\dfrac{3}{5}|D_2|+\dfrac{3}{5}|D_3|+\dfrac{1}{5}|D_4|\\
			& 			+\frac{6}{5}|O_1|+\frac{1}{2}\abs{O_2} 
			+\frac{3}{10}\abs{O_{3} }-\frac{1}{5}\abs{O_{4,1} }-\frac{1}{5}\abs{O_{4,2} }-\frac{2}{5}\abs{O_5}-\frac{3}{5}|O_6|.
			%,\\	\sum_{i=1}^{k-1}\left(\mu_i+\nu_i\right) =|O_2|+|O_4|+|O_5|+|O_6|.
		\end{split}
	\end{equation}
	Interestingly, the following lemma shows that $\text{Ord}_1$ for any off-diagonal permutation is always less than $6k/5$, which is exactly the factor needed to derive the negligibility of off-diagonal blocks  as in Lemma \ref{lem:bdoffdiag}. 
	
	\begin{lem}\label{lem:ord1bd}
		For any $\sigma\in \mathcal{P}_{4k}\backslash \mathcal{D}_{4k}$, it holds that
		\begin{equation}\label{ord1bd}
			\ord_1<6k/5.
		\end{equation}
	\end{lem}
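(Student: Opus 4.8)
The plan is to read $\ord_1$ combinatorially. Group the $4k$ indices into $k$ consecutive \emph{blocks} of size $4$, and give each index a \emph{type} in $\{1,2,3,4\}$ according to its residue mod $4$ (type $4$ meaning $\equiv 0$). A matched pair $\{\sigma_{2\ell-1},\sigma_{2\ell}\}$ of $\sigma\in\mathcal P_{4k}$ is \emph{internal} if both indices lie in one block and \emph{external} otherwise. Since the upper-bound matrices on the right of \eqref{order of diagonal after trans for GSE} and \eqref{2order of good off-diagonal after trans} are symmetric, the power of $n$ each pair contributes to \eqref{ord1} is a function $w(\cdot)$ only of (i) whether the pair is internal or external and (ii) the \emph{unordered} pair of types of its two endpoints, and $\ord_1=\sum_{\text{pairs}}w(\text{pair})$. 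First I would record this weight table from Figures~\ref{fig1}--\ref{fig2}: internally, $\{1,2\}\mapsto 1$, $\{1,3\},\{1,4\},\{2,3\},\{2,4\}\mapsto 3/5$, $\{3,4\}\mapsto 1/5$; externally, $\{1,1\}\mapsto 6/5$, $\{1,2\}\mapsto 1/2$, $\{1,3\},\{1,4\}\mapsto 3/10$, $\{2,2\},\{3,3\}\mapsto -1/5$, $\{2,3\},\{2,4\},\{3,4\}\mapsto -2/5$, $\{4,4\}\mapsto -3/5$.

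Next, for each block $i$ let $x_i\in\{0,2,4\}$ be the number of its indices that lie in external pairs ($x_i$ is even because the block has four indices), and put $\phi_i:=\tfrac65-(\text{sum of weights of the internal pairs inside block }i)$. A short check over the three perfect matchings of $\{1,2,3,4\}$ shows each such matching totals $\tfrac65$, so $\phi_i=\tfrac65$ when $x_i=0$; when $x_i=4$ there are no internal pairs so $\phi_i=\tfrac65$; and when $x_i=2$ one computes, according to which two types are external, $\phi_i=1$ (types $\{1,2\}$ external), $\phi_i=3/5$ (types $\{1,3\},\{1,4\},\{2,3\},\{2,4\}$ external), or $\phi_i=1/5$ (types $\{3,4\}$ external); in every case $\phi_i\ge 0$. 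Splitting $\ord_1$ into internal and external contributions and using $\sum_{\text{int}}w=\sum_i(\tfrac65-\phi_i)$ gives the identity $\ord_1=\tfrac{6k}5-\big(\sum_i\phi_i-\sum_{\text{ext}}w_{\text{ext}}\big)$, so it suffices to prove $\sum_{\text{ext pairs}}w_{\text{ext}}<\sum_i\phi_i$.

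The heart of the argument is a charging scheme. I would introduce the type potential $\psi(1)=3/5$, $\psi(2)=2/5$, $\psi(3)=\psi(4)=0$ and verify two finite tables: (a) $w_{\text{ext}}(\{a,b\})\le\psi(a)+\psi(b)$ for all types $a,b$, with equality \emph{only} for $\{a,b\}=\{1,1\}$ (e.g. $\{1,2\}$: $1>1/2$; $\{1,3\}$: $3/5>3/10$; $\{2,3\}$: $2/5>-2/5$; etc.); and (b) for every block $i$, the sum of $\psi$ over the types of its external endpoints is at most $\phi_i$ (again only finitely many cases, keyed to $x_i$ and the external type set: $\psi(1)+\psi(2)=1\le\phi=1$, $\psi(1)+\psi(3)=3/5\le\phi=3/5$, $\psi(1)+\psi(2)+\psi(3)+\psi(4)=1\le 6/5$, etc.). Summing (a) over all external pairs and regrouping the right-hand side by block, then applying (b) blockwise, yields $\sum_{\text{ext}}w_{\text{ext}}\le\sum_i\phi_i$, and the inequality is strict the moment some external pair has type other than $\{1,1\}$. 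Finally, since $\sigma\notin\mathcal D_{4k}$ there is at least one external pair; but a block contains exactly one type-$1$ index, so if every external pair had type $\{1,1\}$ then every block would have at most one, hence an odd number, of external indices — contradicting that each $x_i$ is even. Therefore some external pair is not of type $\{1,1\}$, the inequality is strict, and $\ord_1<\tfrac{6k}5$, which is \eqref{ord1bd}.

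The main obstacle is selecting the potential $\psi$ so that (a) and (b) hold simultaneously: the external regime $O_1$ (type $\{1,1\}$, weight $6/5$) exceeds twice the potential one may place on a type-$1$ index without breaking the block constraints, which force $\psi(1)+\psi(2)\le 1$ and $\psi(1)+\psi(3)\le 3/5$; the choice above is essentially the unique tight one, and it leaves no slack on $O_1$, so strictness is recovered only through the parity obstruction on type-$1$ indices. Everything else is a bounded, mechanical verification of the two tables.
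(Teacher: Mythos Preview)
Your argument is correct and complete (modulo a harmless slip: when $x_i=0$ the internal matching totals $6/5$, so $\phi_i=0$, not $6/5$; this does not affect anything since when $x_i=0$ the block contributes $0$ to both sides of the inequality you are proving).

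Your route is genuinely different from the paper's. The paper proceeds algebraically: it writes down four linear identities among the cardinalities $|D_i|,|O_j|$ coming from counting indices in each residue class mod $4$ (equations \eqref{cons1}--\eqref{cons4}), then takes a specific linear combination to produce an explicit expression for $6k/5$ (equation \eqref{addid}); subtracting $\ord_1$ reduces the claim to the inequality $|O_1|<\frac32|O_3|+2|O_{4,2}|+\frac52|O_2|+\cdots$, which in turn is deduced from the cleaner $|O_1|\le\sum_{i\ge 2}|O_i|$ (equation \eqref{o1eq1}) together with $\sum_i|O_i|>0$. The last inequality is proved by the same evenness observation you use (each block has an even number of externally matched indices). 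By contrast, you work block-locally with a type potential $\psi$ and a charging scheme, so that the global inequality decomposes into two finite per-block and per-pair verifications; your parity argument enters only to rule out the one tight case $\{1,1\}$. Your approach is more structural and would port more readily to variants of the weight table (one just searches for a new $\psi$), while the paper's approach makes the connection to the later error estimates (Sections~\ref{subsec:error}--\ref{subsec:e2kn}) more transparent, since those proofs reuse the same linear-combination template on enlarged constraint systems.
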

	Assuming Lemma \ref{lem:ord1bd}, we can complete the proof of Lemma \ref{lem:bdoffdiag}. 
	\begin{proof}[Proof of Lemma \ref{lem:bdoffdiag}]
		%For the ease of notations we introduce a set
		%$$
		%\bar \Omega_{k,n}=\left\{(\lambda_1,x_1,\ldots,\lambda_k,x_k ):(\lambda_1,\ldots, \lambda_k) \in \Omega_{k,n}, x_i \in \lambda_i+A_n, \forall \, 1\leq i\leq k\right\},
		%$$
		Recall the set $\tilde\Omega_k$ in \eqref{domain of rho},  its Lebesgue measure satisfies 
		$$
		\mathcal L({\tilde \Omega_{k}})\lesssim \mathcal L({A_n})^{k}\lesssim n^{-6k/5}. 
		$$
		This, together with \eqref{ord1bd}, implies that  for any $\sigma\in \mathcal{P}_{4k}\backslash \mathcal{D}_{4k}$,
		$$	\label{cons of upper bound}
		\begin{aligned}
			&	\int_{\tilde\Omega_{k}} 	\abs{\M_{4k}^{(F)}(\sigma_1,\sigma_2)\M_{4k}^{(F)}(\sigma_3,\sigma_4)\cdots \M_{4k}^{(F)}(\sigma_{4k-1},\sigma_{4k})}\mathrm{d}\lambda_1\mathrm{d}x_1 \cdots \mathrm{d}\lambda_k\mathrm{d}x_k\\
			\lesssim&	\int_{\tilde \Omega_{k}}U_{4k}(\sigma) \mathrm{d}\lambda_1\mathrm{d}x_1 \cdots \mathrm{d}\lambda_k\mathrm{d}x_k\\
			\lesssim  & \left(\log^C n\right)  n^{-6k/5+\ord_1},
		\end{aligned} 
		$$
		which converges to 0 as $n\to\infty$. 
		Here, we have  used the fact that 
		$\abs{\M_{4k}^{(F)}(i,j)} \lesssim \U_{4k}(i,j) $ for all $1\leq i,j\leq 4k$. This implies $$ \lim_{n\to\infty}\int_{\lambda_1+A_n}\cdots \int_{\lambda_k+A_n} \abs{\pfaO \M_{4k}^{(F)}}\mathrm{d}x_1\cdots \mathrm{d}x_k =0, $$which completes the proof by \eqref{pfaoid2}. 
	\end{proof}
	Now we prove Lemma \ref{lem:ord1bd}.
	\begin{proof}[Proof of Lemma \ref{lem:ord1bd}]
		The starting point of the proof is to derive some identities which hold for all $\sigma\in \mathcal{P}_{4k}$. Since the two sets $\{\sigma_i,1\leq i\leq 4k\}$ and $[4k]$ are the same, if we group $\sigma_i's$ according to their congruence classes modulo 4, then for each $\ell=0,1,2,3$, 
		\begin{equation}\label{sigma_mod}
			\abs{\{ i\in [4k]:\sigma_i\equiv \ell (\mod 4)\}}= \abs{\{i\in [4k]:i\equiv \ell(\mod 4)\}} = k.
		\end{equation}
		We can also rewrite the left hand hand of \eqref{sigma_mod} via
		\begin{equation}\label{decomp1}
			\begin{split}
				\abs{\{ i\in [4k]:\sigma_i\equiv \ell (\mod 4)\}}
				=&\abs{  \{ (i,j): \mathbf{\Lambda}_{4k}(i,j)=1, i\equiv \ell, j\not\equiv \ell (\mathrm{mode}\, 4)  \} }\\
				&+\abs{  \{ (i,j): \mathbf{\Lambda}_{4k}(i,j)=1, i\not\equiv \ell, j\equiv \ell (\mathrm{mode}\, 4)  \} }\\
				&+2\abs{  \{ (i,j): \mathbf{\Lambda}_{4k}(i,j)=1, i\equiv j\equiv \ell  (\mathrm{mode}\, 4)  \} }.
			\end{split}
		\end{equation}
		Consider first the  case $\ell=1$. By the definition of the sets $D_i's$ and $O_i's$, we see that sum of the first and second lines on  the right hand side  of 
		\eqref{decomp1} is equal to $\abs{D_1}+\abs{O_2}+\abs{O_3}$, while the third lines are $2\abs{O_1}$. It follows from this observation and \eqref{sigma_mod} (with $\ell=1$) that
		\begin{equation}\label{cons1}
			2|O_1|+|O_2|+|O_{3} |+|D_1|+|D_2| =k.
		\end{equation}
		Similarly, for $\ell=2$, we get
		\begin{equation}\label{cons2}
			|O_2|+2|O_{4,1} |+|O_{5,1}|+|D_1|+|D_3|=k. 
		\end{equation}
		Now, combining the cases of $\ell=3$ and $\ell=4$, we get
		\begin{equation}\label{cons3}
			|O_{3} |+2|O_{4,2} |+|O_{5,1}|+2|O_{5,2}|+2|O_6|+|D_2|+|D_3|+2|D_4|=2k.
		\end{equation}
		We will need one more identity. Since the number of nonzero  $\mathbf{\Lambda}_{4k}(i,j)'s$ is $2k$, we have
		\begin{equation}\label{cons4}
			\abs{D_1}+\abs{D_2}+\abs{D_3}+\abs{D_4}+\abs{O_1}+\abs{O_2}+|O_3|+\abs{O_4}+\abs{O_5}+\abs{O_6}=2k.
		\end{equation}
		By computing the linear combination
		$$
		\frac{3}{5} \times \eqref{cons1}
		+\frac{3}{5} \times \eqref{cons2}+\frac{1}{5}\times \eqref{cons3}
		-\frac{1}{5}\times  \eqref{cons4},
		$$
		we find that 	\begin{equation}\label{addid}
			\begin{split}
				\frac{6k}{5}=	&|D_1|+\frac{3}{5}|D_2|+\frac{3}{5}|D_3|+\frac{1}{5}|D_4|+
				|O_1|+|O_2| \\
				&+\frac{3}{5}|O_{3} |+\abs{O_{4,1} }+\frac{1}{5}|O_{4,2} |
				+\frac{3}{5}|O_{5,1}|+\frac{1}{5}|O_{5,2}|+\frac{1}{5}|O_6|.
			\end{split}
		\end{equation}
		Comparing \eqref{ord1} and \eqref{addid}, we see that the assertion  $\ord_1<6k/5$  for $\sigma\in \mathcal{P}_{4k}/\mathcal{D}_{4k}$ is equivalent to
		\begin{equation}\label{equicond}
			\abs{O_1}<\frac{3}{2}\abs{O_{3} }
			+2\abs{O_{4,2}} 
			+ \frac 52 \abs{O_2}
			+3\abs{O_{5,2}}+4\abs{O_6}   +5\abs{O_{5,1}}
			+			6\abs{O_{4,1} }.
		\end{equation}
		Actually, we shall prove a stronger conclusion  for $\sigma\in \mathcal{P}_{4k}$:
		\begin{equation}\label{o1eq1}
			\abs{O_1}\leq \abs{O_2}+|O_3|+\abs{O_4}+\abs{O_5}+\abs{O_6}.
		\end{equation}
		Indeed, assuming that \eqref{o1eq1} holds, then \eqref{equicond} follows by combining \eqref{o1eq1} and the trivial fact $
		\sum_{i=1}^6\abs{O_i}>0$ for $\sigma\in \mathcal{P}_{4k}/\mathcal{D}_{4k}$. 
		
		We now provide an intuitive explanation of \eqref{o1eq1}. Suppose that $\mathbf{\Lambda}_{4k}(1,5)=1$, i.e., $(1,5)\in O_1$, then there must exist at least one element $b\in \{2,3,4\}$ and  $b'>5$ such that $\mathbf{\Lambda}_{4k}(b,b')=1$, leading to $(b,b') \in \cup_{i=2}^6 O_i$. This suggests that the right hand side of \eqref{o1eq1} should dominate  $\abs{O_1}$.  
		
		To prove \eqref{o1eq1} formally, we observe that,  by the construction of $\mathbf{\Lambda}_{4k}$, for each $1\leq i\leq 4k$,
		\begin{equation}
			\sum_{j:j\neq i} \left(\mathbf{\Lambda}_{4k}(i,j)+\mathbf{\Lambda}_{4k}(j,i)\right)=1.
		\end{equation}
		Summing this equality over all $4\ell -3\leq i\leq 
		4\ell $ for some $1\leq \ell \leq k$, and using the fact that $\mathbf{\Lambda}_{4k}(i,j)=0$ for $i\geq j$, we get
		\begin{equation}\label{sigsum}
			2     \sum_{\lceil  i/4 \rceil = \lceil j/4\rceil=\ell}\mathbf{\Lambda}_{4k}(i,j)+ \sum_{\lceil  i/4 \rceil =\ell < \lceil j/4\rceil}\mathbf{\Lambda}_{4k}(i,j)+
			\sum_{   \lceil j/4\rceil< \lceil  i/4 \rceil =\ell}\mathbf{\Lambda}_{4k}(j,i)=4. 
		\end{equation}%where $\lceil a \rceil$ is the smallest integer that is $\geq a$. 
		We now define a function $ h_{\sigma}$ as follows:
		\begin{equation}\label{hsigmadef}
			h_{\sigma}(\ell):=\sum_{\lceil  i/4 \rceil =\ell < \lceil j/4\rceil}\mathbf{\Lambda}_{4k}(i,j)+
			\sum_{ \lceil j/4\rceil< \lceil  i/4 \rceil =\ell }\mathbf{\Lambda}_{4k}(j,i), \, 1\leq \ell \leq k.
		\end{equation}
		By \eqref{sigsum}, $h_{\sigma}(\ell)$ is an even integer for all $\ell$.  In particular,  we have the implications
		\begin{equation}\label{hi}\begin{split}
				&\forall \, 1\leq \ell_1<\ell_2\leq k,
				(4\ell_1-3,4\ell_2-3)\in O_1  \Rightarrow  \mathbf{\Lambda}_{4k}(4\ell_1-3,4\ell_2-3)=1
				\\   & \Rightarrow   \min\{h_{\sigma}(\ell_1),h_{\sigma}(\ell_2)\} \geq 1
				\Rightarrow   \min\{h_{\sigma}(\ell_1),h_{\sigma}(\ell_2)\} \geq 2,
			\end{split}
		\end{equation}
		where we used the evenness of $h_{\sigma}(\ell_1)$ and $h_{\sigma}(\ell_2)$ in the last step. 
		Consider the set 
		\begin{equation}\label{hato1def}
			\widehat{O}_1:=\{1 \leq \ell\leq k: 
			\exists \ell'\in [1,k], \mbox{ s.t. }  (4\min\{\ell,\ell'\}-3, 4\max\{\ell,\ell'\}-3) \in O_1\}.
		\end{equation}
		It follows from  \eqref{hsigmadef} and \eqref{hi} that
		\begin{equation}\label{hnsum}
			\sum_{1\leq \ell\leq k} h_{\sigma}(\ell)\geq    \sum_{\ell\in \widehat{O}_1} h_{\sigma}(\ell)\geq 2 \abs{\widehat{O}_1}=4|O_1|. 
		\end{equation}
		On the other hand, we also have
		\begin{equation}\label{hnsum2}
			\begin{split}
				\sum_{1\leq \ell\leq k} h_{\sigma}(\ell)&=
				\sum_{1\leq \ell \leq k}\sum_{\lceil  i/4 \rceil =\ell < \lceil j/4\rceil}\mathbf{\Lambda}_{4k}(i,j)+
				\sum_{1\leq \ell \leq k}\sum_{ \lceil j/4\rceil< \lceil  i/4 \rceil =\ell }\mathbf{\Lambda}_{4k}(j,i)\\
				&=\sum_{\lceil  i/4 \rceil < \lceil j/4\rceil} \mathbf{\Lambda}_{4k}(i,j)+\sum_{\lceil  j/4 \rceil < \lceil i/4\rceil} \mathbf{\Lambda}_{4k}(j,i)=2\sum_{j=1}^6 \abs{O_j}. 
			\end{split}
		\end{equation}
		Now \eqref{o1eq1}  follows by combining \eqref{hnsum} and \eqref{hnsum2}.  And thus we completes the proof of Lemma \ref{lem:ord1bd}. 
	\end{proof}
	
	\subsection{Proof of  \eqref{main}: the limit of the main term $L_{k,n}$}\label{may}
	Recall the definition of $L_{k,n}$ in \eqref{upper}, and the decomposition of the correlation function $\rho_{2k}=\pfaD \M_{4k}+\pfaO \M_{4k}$ in \eqref{diagofdiag}, we can
	%We decompose $\rho_{2k,n}$ into diagonal part $\rho_{2k,n,D}$ and $\rho_{2k,n,O}$, i.e., 
	%$$
	%\rho_{2k,n,D}=\prod_{i=1}^k \pfa M_{i,i}, \quad \rho_{2k,n,O}=\rho_{2k,n}-\rho_{2k,n,D}.
	%$$
	%Similarly, 
	decompose $L_{k,n}$ into  $L_{k,n,D}$ and $L_{k,n,O}$ by integrating $\pfaD \M_{4k}$ and $\pfaO \M_{4k}$, respectively. 
	
	By Lemma \ref{lem:diag pfa} and the fact that the measure of the set $I^k \cap \Omega_{k}^c$ converges to 0,  we have 	\begin{align*}&	\lim_{n\to\infty} \int_{I^k \cap \Omega_{k}}L_{k,n,D} \mathrm{d}\lambda_1\cdots \mathrm{d}\lambda_k \\ =& 	\int_{I^k}\Big(\lim_{n\to\infty} \prod_{i=1}^{k}\int_{\lambda_i+A_n}\pfa M_{i,i}\mathrm{d}x_i\Big)\mathrm{d}\lambda_1\cdots \mathrm{d}\lambda_k\\
		= &\left(\dfrac{1}{540\pi^2}\int_A u^{4}\mathrm{d}u\right)^k \int_{I^k}\left( 2\pi\rho_{\semi}(\lambda_i)\right)^{6} \mathrm{d} \lambda_{1} \cdots \mathrm{d} \lambda_{k}.\end{align*}
	On the other side,
	by Lemma \ref{lem:bdoffdiag}, we have
	\begin{align*}	\label{main_2}
		&	\lim_{n\to\infty}\Big| \int_{I^k \cap \Omega_{k}}L_{k,n,O} \mathrm{d}\lambda_1\cdots \mathrm{d}\lambda_k\Big|\\
		\leq &\lim_{n\to\infty}  \int_{I^k \cap \Omega_{k}}\int_{\lambda_1+A_n}\cdots \int_{\lambda_k+A_n} \abs{\pfaO \M_{4k}}\mathrm{d}x_1\cdots \mathrm{d}x_k \mathrm{d}\lambda_1\cdots \mathrm{d}\lambda_k=0, 
	\end{align*}
	which  completes the proof of \eqref{main}. %now follows from \eqref{main_1}
	%and \eqref{main_2}. 
	%	\leq & \sum_{\sigma\in \mathcal{P}_{4k}\backslash \mathcal{D}_{4k} }\lim_{n\to\infty} \int_{\tilde \Omega_{k}} 	\abs{\M^{(R)}(\sigma_1,\sigma_2)\M^{(R)}(\sigma_3,\sigma_4)\cdots \M^{(R)}(\sigma_{4k-1},\sigma_{4k})}\mathrm{d}\lambda_1\mathrm{d}x_1 \cdots \mathrm{d}\lambda_k\mathrm{d}x_k\\ =&0, 
	
	\section{Estimates of the error terms}
	\label{sec:err} Recall the definitions of $E_{1,k,n}$ and $E_{2,k,n}$ in \eqref{lower}, we now prove  \eqref{1error} and \eqref{2error}. This will complete the proof of Lemma \ref{mainlemma}, and subsequently, Theorem \ref{small gaps of GbetaE} for GSE.

	\subsection{Estimates of $E_{1,k,n}$} 
	\label{subsec:error}
	%\subsubsection{Entry-wise estimates on the transformed correlation matrix}
	To bound $E_{1,k,n}$, we need
	to control the $(2k+1)$-point correlation function $\rho_{2k+1,n}(\lambda_1,x_1,\ldots, \lambda_k,x_k,z)$ where $z\in T_{1,k,n}(\lambda_1,\ldots, \lambda_k)$ (recall \eqref{T1}). 
	We denote $$
	T_{1,k,n}^{[i]}:= \left(\lambda_i,\lambda_i+2\sup(A_n)\right).
	$$  Without loss of generality, we may further assume  $z\in T_{1,k,n}^{[k]}$. Then we have 
	$$\rho_{2k+1,n}(\lambda_1,x_1,\ldots, \lambda_k,x_k,z)=\pfa  \M_{4k+2},$$
	which is  a $(4k+2)\times (4k+2)$ matrix that can be written in a block form
	$$
	\M_{4k+2}=\left(M_{i,j} \right)_{1\leq i,j\leq k}.
	$$
	Here, for $i,j<k$,  $M_{i,j}$ has size $4\times 4$, while the sizes for $M_{i,k}$ ($i<k$) and $M_{k,k}$ are  $4\times 6$ and $6\times 6$, respectively. 
	
	As before we make two rounds of transformations to $\M_{4k+2}$. 
	We first make the same transformations  as in Section \ref{subsec:main} to the first $4k-4$ rows and columns of $\M_{4k+2}$.
	Then we perform the following additional operations to the last 6 rows/columns of $\M_{4k+2}$:
	\begin{itemize}
		\item[-] Add $(\lambda_k-x_k)$ times of the $(4k-2)$-th row to the $(4k-3)$-th row.  Also subtract $(x_k-z)$ times of the $(4k+2)$-th row from $(4k+1)$-th row. Then perform the same operations to the columns. 
		\item[-] Subtract the $(4k-3)$-th row from the $(4k-1)$-th  and $(4k+1)$-th rows, and also subtract the $(4k-2)$-th row from the $4k$-th and $(4k+2)$-th rows. Then also perform the same operations to the columns. 
	\end{itemize}
	
	After these row/column transformations are done, we  make the same operation as in \eqref{operation of pfaffian}.  And we denote the new matrix by $\M_{4k+2}^{(F)}$ as before.

	Then the upper-left  minor of size $4k-4$ in $\M_{4k+2}^{(F)}$   become the same as in Section \ref{subsec:main}, since they are not affected by the last 6 rows/columns in the transformations.
	Hence  the diagonal  blocks $ M^{(F)}_{i,i}$ ($1\leq i\leq k-1$) and off-diagonal blocks $M_{i,j}$ ($i,j<k$) have the same bound given by  \eqref{order of diagonal after trans for GSE} and \eqref{2order of good off-diagonal after trans}, respectively.
	Moreover,  we can show that the last diagonal  block $M^{(F)}_{k,k}$ have the bound
	\begin{equation}\label{m3es GSE}
		\abs{M^{(F)}_{k,k}} \lesssim 
		\begin{bmatrix}
			0 & n &  n^{3/5} & n^{3/5} & n^{3/5} &
			n^{3/5} \\
			n & 0 & n^{3/5} & n^{3/5} & n^{3/5} & n^{3/5}\\
			n^{3/5}	& n^{3/5} & 0 & n^{1/5} & n^{3/5} &n^{3/5} \\
			n^{3/5}	& n^{3/5} & n^{1/5} & 0 & n^{3/5} & n^{3/5} \\
			n^{3/5}	& n^{3/5} & n^{3/5} & n^{3/5} & 0 & n^{1/5} \\
			n^{3/5}	& n^{3/5} & n^{3/5} & n^{3/5} & n^{1/5} & 0
		\end{bmatrix}.
	\end{equation}
	For the off-diagonal part, it remains to control $M^{(F)}_{i,k}$ for $i<k$.  We decompose  
	$$
	M^{(F)}_{i,k}=\left( M^{(F)}_{i,k,1},M^{(F)}_{i,k,2}\right),
	$$ 
	where $M^{(F)}_{i,k,1}$ is a square matrix consisting of the first 4 rows/columns of $M^{(F)}_{i,k}$, and $M^{(F)}_{i,k,2}$ is the remaining part with size $4\times 2$. Here $M^{(F)}_{i,k,1}$ has the same bound as \eqref{2order of good off-diagonal after trans}. For $M^{(F)}_{i,k,2}$, we can show that
	\begin{align}
		\abs{M^{(F)}_{i,k,2}} \lesssim 
		\begin{bmatrix}
			n^{3/10} & 	n^{3/10} \\
			n^{-2/5}\log n &   	n^{-2/5}\log n\\
			n^{-1/5}\log n &  	n^{-2/5}\log n\\
			n^{-2/5}\log n &  	n^{-3/5}\log n
		\end{bmatrix}.
		\label{extra}
	\end{align}
	Using \eqref{m3es GSE} and \eqref{extra}, we can 
	construct a $(4k+2)\times (4k+2)$ matrix $\U_{4k+2}$ that serves as an entrywise upper bound for $\M^{(F)}_{4k+2}$.
	
	Now we can bound the diagonal and off-diagonal parts of $E_{1,k,n}$ in a unified way.  Recall that for any $\sigma\in \mathcal{P}_{4k+2}$, we  associate it with a 0-1 matrix $\mathbf{\Lambda}_{4k+2}$ via \eqref{defSigma}. We now introduce  additional diagonal sets $D_{5}$ and $D_6$ (which lie in the on diagonal block where the indices of entries are $[4k-3,4k+2]\times [4k+1,4k+2]$, which is the last $6\times6$ diagonal block), and off-diagonal sets $O_7,O_8,O_9,O_{10}$ (which lie in $4\times2$ blocks where the indices of entries are $[4i-3,4i]\times [4k+1,4k+2]$, $i=1,..,k-1$).  Together they constitute the last two columns of $\mathbf{\Lambda}_{4k+2}$. Their positions are shown in the  Figure \ref{fig:diagonal_block} and Figure \ref{fig:off_diagonal_blocks} below, respectively.

	\begin{figure}[!htbp]
		\centering
		
		\begin{minipage}[b]{0.47\textwidth}
			\centering
			\begin{tikzpicture}[baseline=(m.south)]
				% 绘制矩阵
				\matrix (m) [matrix of math nodes, nodes={minimum size=5mm, anchor=center}, column sep=3mm, row sep=3mm, right delimiter={]}, left delimiter={[}]{
					0 & \phantom{a_{12}} & \phantom{a_{13}} & \phantom{a_{14}} & \phantom{a_{15}} & \phantom{a_{16}} \\
					\phantom{a_{21}} & 0 & \phantom{a_{23}} & \phantom{a_{24}} & \phantom{a_{25}} & \phantom{a_{26}} \\
					\phantom{a_{31}} & \phantom{a_{32}} & 0 & \phantom{a_{34}} & \phantom{a_{35}} & \phantom{a_{36}} \\
					\phantom{a_{41}} & \phantom{a_{42}} & \phantom{a_{43}} & 0 & \phantom{a_{45}} & \phantom{a_{46}} \\
					\phantom{a_{51}} & \phantom{a_{52}} & \phantom{a_{53}} & \phantom{a_{54}} & 0 & \phantom{a_{56}} \\
					\phantom{a_{61}} & \phantom{a_{62}} & \phantom{a_{63}} & \phantom{a_{64}} & \phantom{a_{65}} & 0 \\
				};
				
				% 绘制方框和标注
				\draw[dashed, line width=0.8pt, rounded corners] ([xshift=-0.9mm,yshift=0.9mm]m-1-2.north west) rectangle ([xshift=0.9mm,yshift=-0.9mm]m-1-2.south east) node[midway] {$D_1$};
				\draw[dashed, line width=0.8pt, rounded corners] ([xshift=-0.9mm,yshift=0.9mm]m-1-3.north west) rectangle ([xshift=0.9mm,yshift=-0.9mm]m-1-4.south east) node[midway] {$D_2$};
				\draw[dashed, line width=0.8pt, rounded corners] ([xshift=-0.9mm,yshift=0.9mm]m-2-3.north west) rectangle ([xshift=0.9mm,yshift=-0.9mm]m-2-4.south east) node[midway] {$D_3$};
				\draw[dashed, line width=0.8pt, rounded corners] ([xshift=-0.9mm,yshift=0.9mm]m-3-4.north west) rectangle ([xshift=0.9mm,yshift=-0.9mm]m-3-4.south east) node[midway] {$D_4$};
				\draw[dashed, line width=0.8pt, rounded corners] ([xshift=-0.9mm,yshift=0.9mm]m-1-5.north west) rectangle ([xshift=0.9mm,yshift=-0.9mm]m-1-5.south east) node[midway] {$D_{5,1}$};
				\draw[dashed, line width=0.8pt, rounded corners] ([xshift=-0.9mm,yshift=0.9mm]m-1-6.north west) rectangle ([xshift=0.9mm,yshift=-0.9mm]m-1-6.south east) node[midway] {$D_{5,2}$};
				\draw[dashed, line width=0.8pt, rounded corners] ([xshift=-0.9mm,yshift=0.9mm]m-2-5.north west) rectangle ([xshift=0.9mm,yshift=-0.9mm]m-2-5.south east) node[midway] {$D_{5,2}$};
				\draw[dashed, line width=0.8pt, rounded corners] ([xshift=-0.9mm,yshift=0.9mm]m-2-6.north west) rectangle ([xshift=0.9mm,yshift=-0.9mm]m-2-6.south east) node[midway] {$D_{5,3}$};
				\draw[dashed, line width=0.8pt, rounded corners] ([xshift=-0.9mm,yshift=0.9mm]m-3-5.north west) rectangle ([xshift=0.9mm,yshift=-0.9mm]m-4-5.south east) node[midway] {$D_{5,4}$};
				\draw[dashed, line width=0.8pt, rounded corners] ([xshift=-0.9mm,yshift=0.9mm]m-3-6.north west) rectangle ([xshift=0.9mm,yshift=-0.9mm]m-4-6.south east) node[midway] {$D_{5,5}$};
				\draw[dashed, line width=0.8pt, rounded corners] ([xshift=-0.9mm,yshift=0.9mm]m-5-6.north west) rectangle ([xshift=0.9mm,yshift=-0.9mm]m-5-6.south east) node[midway] {$D_{6}$};
			\end{tikzpicture}
			\caption{Partition of the last $6\times 6$ diagonal block .}
			\label{fig:diagonal_block}
		\end{minipage}

		\begin{minipage}[b]{0.35\textwidth}
			\centering
			\begin{tikzpicture}[baseline=(m.south)]
				% 绘制矩阵
				\matrix (m) [matrix of math nodes, nodes={minimum size=5mm, anchor=center}, column sep=3mm, row sep=3mm, right delimiter={]}, left delimiter={[}]{
					\phantom{a_{11}} & \phantom{a_{12}} \\
					\phantom{a_{21}} & \phantom{a_{22}} \\
					\phantom{a_{31}} & \phantom{a_{32}} \\
					\phantom{a_{41}} & \phantom{a_{42}} \\
				};
				
				% 绘制方框和标注
				\draw[dashed, line width=0.8pt, rounded corners=0.9mm] ([xshift=-0.9mm,yshift=0.9mm]m-1-1.north west) rectangle ([xshift=0.9mm,yshift=-0.9mm]m-1-1.south east) node[midway] {$O_{7,1}$};
				\draw[dashed, line width=0.8pt, rounded corners=0.9mm] ([xshift=-0.9mm,yshift=0.9mm]m-1-2.north west) rectangle ([xshift=0.9mm,yshift=-0.9mm]m-1-2.south east) node[midway] {$O_{7,2}$};
				\draw[dashed, line width=0.8pt, rounded corners=0.9mm] ([xshift=-0.9mm,yshift=0.9mm]m-2-1.north west) rectangle ([xshift=0.9mm,yshift=-0.9mm]m-2-1.south east) node[midway] {$O_{9,1}$};
				\draw[dashed, line width=0.8pt, rounded corners=0.9mm] ([xshift=-0.9mm,yshift=0.9mm]m-2-2.north west) rectangle ([xshift=0.9mm,yshift=-0.9mm]m-2-2.south east) node[midway] {$O_{9,2}$};
				\draw[dashed, line width=0.8pt, rounded corners=0.9mm] ([xshift=-0.9mm,yshift=0.9mm]m-3-1.north west) rectangle ([xshift=0.9mm,yshift=-0.9mm]m-3-1.south east) node[midway] {$O_{8}$};
				\draw[dashed, line width=0.8pt, rounded corners=0.9mm] ([xshift=-0.9mm,yshift=0.9mm]m-3-2.north west) rectangle ([xshift=0.9mm,yshift=-0.9mm]m-3-2.south east) node[midway] {$O_{9,3}$};
				\draw[dashed, line width=0.8pt, rounded corners=0.9mm] ([xshift=-0.9mm,yshift=0.9mm]m-4-1.north west) rectangle ([xshift=0.9mm,yshift=-0.9mm]m-4-1.south east) node[midway] {$O_{9,4}$};
				\draw[dashed, line width=0.8pt, rounded corners=0.9mm] ([xshift=-0.9mm,yshift=0.9mm]m-4-2.north west) rectangle ([xshift=0.9mm,yshift=-0.9mm]m-4-2.south east) node[midway] {$O_{10}$};
			\end{tikzpicture}
			\caption{Partition of $4\times 2$ off-diagonal block.}
			\label{fig:off_diagonal_blocks}
		\end{minipage}
		
	\end{figure}
	% \end{center}
	
	We define $
	U_{4k+2}(\sigma) 
	$ as in \eqref{def of U}
	(we need to replace $2k$ by $2k+1$ in the upper limit of the product).
	Similarly to \eqref{usigma1}for all $(\lambda_1,\ldots, \lambda_k)\in \Omega$ and $\abs{z-\lambda_k}<2\sup(A_n)$, we have
	\begin{align}
		\label{def of ord3}
		U_{4k+2}(\sigma)\lesssim  \left(\log^C n\right)n^{\ord_2},
	\end{align}
	where
	\begin{equation}\label{ord32}
		\begin{split}
			\ord_2:=\ord_1+\frac{3}{5}|D_5|+\frac{1}{5}|D_6|
			+\frac{3}{10}|O_7|
			-\frac{1}{5}|O_8|-\frac{2}{5}|O_9|-\frac{3}{5}|O_{10}|,
			%   &	|D_1|+\dfrac{3}{5}|D_2|+\dfrac{3}{5}|D_3|+\dfrac{1}{5}|D_4|
			%		+\frac{6}{5}|O_1|+\frac{1}{2}\abs{O_2} 
			%		+\frac{3}{10}\abs{O_{3}}-\frac{1}{5}|O_4| 
			%		-\frac{2}{5}\abs{O_{5}}\\
			%  &-\frac{3}{5}|O_6|+\frac{3}{10}|O_7|
			% -\frac{1}{5}|O_8|-\frac{2}{5}|O_9|-\frac{3}{5}|O_{10}|+\frac{3}{5}|D_5|+\frac{1}{5}|D_6|.
		\end{split}
	\end{equation}
	and $\ord_1$ is given in \eqref{ord1}. 
	As before, we will show
	\begin{equation}\label{ord3<}
		\ord_2<{6(k+1)}/{5}.
	\end{equation}
	Indeed, assuming \eqref{ord3<} and using the definition of $\tilde{\Omega}_k$ in \eqref{domain of rho}, we have 
	\begin{equation}
		\begin{split}
			& \int_{I^k \cap \Omega_k}E_{1,k,n}(\lambda_1,\ldots, \lambda_k).	 \mathrm{d}\lambda_1\cdots \mathrm{d}\lambda_k\\
			\lesssim &
			\sum_{\sigma\in \mathcal{P}_{4k+2}}\sum_{i=1}^k \int_{\tilde{ \Omega}_k}\int_{T_{1,k,n}^{[i]}}
			U_{4k+2}(\sigma)\mathrm{d}z \mathrm{d}\lambda_1 \mathrm{d}x_1\cdots   \mathrm{d}\lambda_k \mathrm{d}x_k\\
			\lesssim &  \left(\log^C n\right) n^{\ord_2}
			\mathcal{L}(A_n)^{k}\cdot \sup(A_n)\\
			\lesssim &\left(\log^C n\right)n^{\ord_2-6(k+1)/5},
		\end{split}
	\end{equation}
	which vanishes as $n\to\infty$, as desired. This will complete  \eqref{1error}.
	
	It remains to  establish \eqref{ord3<}. 
	Analogously  to \eqref{cons1} - \eqref{cons4}, we have 
	\begin{equation}\label{2cons1}
		2|O_1|+|O_2|+|O_{3}|+|D_1|+|D_2|+W_1 =k+1,
	\end{equation}
	\begin{equation}\label{2cons2}	|O_2|+2|O_{4,1}|+|O_{5,1}|+|D_1|+|D_3|+W_2=k+1,
	\end{equation}
	\begin{equation}\label{2cons3}
		|O_{3} |+2|O_{4,2} |+|O_{5,1}|+2|O_{5,2}|+2|O_6|+|D_2|+|D_3|+2|D_4|+W_3 =2k,
	\end{equation}
	\begin{equation}\label{2cons4}
		\sum_{i=1}^6 |D_i|+\sum_{i=1}^{10}\abs{O_i}=2k+2, 
	\end{equation}
	where 
	\begin{equation*}
		\begin{split}
			W_1:&=2|O_{7,1}|+|O_8|+|O_{9,1}|+|O_{9,4}|+
			2|D_{5,1}|+|D_{5,2}|+|D_{5,4}|+|D_6|,\\
			W_2:&=|O_{7,2}|+2|O_{9,2}|+|O_{9,3}|+|D_{5,2}|+2|D_{5,3}|+|D_{5,5}|+|D_6|,\\
			W_3:&=|O_8|+|O_{9,3}|+|O_{9,4}|+|O_{10}|+|D_{5,4}|+|D_{5,5}|.
		\end{split} 
	\end{equation*}
	Computing the linear combination$$
	\frac{3}{5} \times \eqref{2cons1}+\frac{3}{5}\times \eqref{2cons2}+\frac{1}{5}\times \eqref{2cons3}-\frac{1}{5}\times \eqref{2cons4},
	$$ we get 
	\begin{equation*}%\label{addid2}
		\begin{split}
			\frac{6k+5}{5}=	&|D_1|+\frac{3}{5}|D_2|+\frac{3}{5}|D_3|+\frac{1}{5}|D_4|+
			|O_1|+|O_2| +\frac{3}{5}|O_{3}|+\abs{O_{4,1}}+\frac{1}{5}|O_{4,2}| \\
			&
			+\frac{3}{5}|O_{5,1}|+\frac{1}{5}|O_{5,2}|+\frac{1}{5}|O_6|+W_4,
		\end{split}
	\end{equation*}
	where 
	\begin{equation}
		\begin{split}
			W_4:= &|O_{7,1}|+\frac{2}{5}|O_{7,2}|+\frac{3}{5}|O_8|+\frac{2}{5}|O_{9,1}|+|O_{9,2}|+ \frac{3}{5}|O_{9,3}|+\frac{3}{5}|O_{9,4}|\\
			&+ |D_{5,1}|+|D_{5,2}|+|D_{5,3}|+\frac{3}{5}|D_{5,4}|+\frac{3}{5}|D_{5,5}|+|D_6|. 
		\end{split}
	\end{equation}
	The condition \eqref{ord3<} is thus equivalent to
	\begin{equation}\label{comp1}
		\begin{split}
			|O_1|<&1+\frac{5}{2}\abs{O_2}+\frac{3}{2}\abs{O_{3}}+
			6\abs{O_{4,1}}+2\abs{O_{4,2}}+5\abs{O_{5,1}}+3\abs{O_{5,2}}+4\abs{O_{6}}\\
			&+7|O_{7,1}|+\frac{1}{2}|O_{7,2}|+
			4|O_8|+4|O_{9,1}|+7|O_{9,2}|+5|O_{9,3}|+
			5|O_{9,4}|+3|O_{10}|\\
			&+2|D_{5,1}|+2|D_{5,2}|+2|D_{5,3}|.
		\end{split}
	\end{equation}
	Clearly \eqref{comp1} holds if $\abs{O_1}=0$. Thus we only consider the case $\abs{O_1}>0$. 
	Repeating the argument around \eqref{o1eq1}, we get
	\begin{equation}\label{o110}
		\abs{O_1}\leq \sum_{i=2}^{10} |O_i|. 
	\end{equation}
	Then \eqref{comp1}  follows from \eqref{o110} and the fact $\abs{O_{7,2}}\leq  1$ which implies $|O_{7,2}|< 1+\abs{O_{7,2}}/2.$
	Therefore, we establish \eqref{ord3<}.

	\subsection{Estimates of $E_{2,k,n}$}\label{subsec:e2kn} 
	% 	For \eqref{2error}, we can bound $E^{(\beta)}_{2k+2,n}$ by $L^{(\beta)}_{2k+2,n}$. 
	
	Given $(\lambda_1,...,\lambda_k)\in I^{k}\cap \Omega_{k}$, let $T_{3,k,n}$ be the first margin of $T_{2,k,n}$,
	\begin{align*}
		T_{3,k,n}=T_{3,k,n}(\lambda_1,\ldots, \lambda_k):=\cup_{i=1}^k\left\{z_1:2\sup\left(A_n\right)<\abs{z_1-\lambda_i}\leq \log^{-1} n\right\}.
	\end{align*}
	Then we have the relation
	\begin{align}
		\int_{I^k \cap \Omega_{k}}E_{2,k,n}\mathrm{d}\lambda_1\cdots \mathrm{d}\lambda_k=\int_{I^k \cap \Omega_{k}}\mathrm{d}\lambda_1\cdots \mathrm{d}\lambda_k\int_{T_{3,k,n}} L_{k+1,n}(\lambda_1,...,\lambda_k,z_1)  \mathrm{d}z_1,
	\end{align}
	where $L_{k+1,n}$ is defined by \eqref{upper}. Similarly to the decomposition of $L_{k,n}$ in Section \ref{may}, we decompose $L_{k+1,n}(\lambda_1,...,\lambda_k,z_1)$ into $L_{k+1,n,D}$ and $L_{k+1,n,O}$. %Note that  $L_{k+1,n,O}$  corresponds to $\pfa \M_{4k+4}$, where $\M_{4k+4}$ is a $(4k+4)\times (4k+4)$ matrix, which can also be viewed as a block matrix containing $(k+1)^2$ blocks, each having size $4\times 4$. %as we did for $L_{k,n}$.
	By Lemma \ref{lem:diag pfa}, $L_{k+1,n,D}$ is uniformly bounded on $I^{k+1}$. Since the Lebesgue measure  $\mathcal L(T_{3,k,n})$ converges to 0 as $n\rightarrow \infty$, we get
	\begin{align}
		\int_{I^k \cap \Omega_{k}}\mathrm{d}\lambda_1\cdots \mathrm{d}\lambda_k\int_{T_{3,k,n}} L_{k+1,n,D}(\lambda_1,...,\lambda_k,z_1)  \mathrm{d}z_1 \rightarrow 0.
	\end{align}
	% 	For $L^{(\beta)}_{k+1,n,O}$, it is easy to see
	%	\begin{align}
	%		\begin{aligned}
	%			\int_{I^k \cap \Omega_{k,n}}\mathrm{d}\lambda_1\cdots \mathrm{d}\lambda_k\int_{T^{(\beta)}_{3}} L^{(\beta)}_{k+1,n,O}  \mathrm{d}z_1
	%			\leq \int_{I^{k+1} \cap \Omega_{k+1,n}} L^{(\beta)}_{k+1,n,O}\mathrm{d}\lambda_1\cdots \mathrm{d}\lambda_{k+1}.
	%		\end{aligned}
	%	\end{align}
	% 	By \eqref{main_2}, we complete the proof of \eqref{2error}.
	We now turn to estimate the off-diagonal part $L_{k+1,n,O}$. 
	Since $\abs{\lambda_i-\lambda_j}\geq \log^{-1} n$ on $\Omega_{k}$, we can rewrite $T_{3,k,n}=\cup_{i=1}^k T_{3,k,n}^{[i]}$ where 
	\begin{equation}\label{t3i}
		\begin{split}
			T_{3,k,n}^{[i]}=\left\{z_1:2\sup\left(A_n\right)<\abs{z_1-\lambda_i}\leq \log^{-1} n;\, \abs{z_1-\lambda_j}\geq \frac{1}{2}\log^{-1} n, \forall \, j\neq i\right\}.
		\end{split}
	\end{equation}
	Without loss of generality, we may only consider $T_{3,k,n}^{[1]}$ here. 
	Similarly to $\M_{4k}$ defined in \eqref{4k4k}, we let $\M_{4k+4}$  be the $(4k+4)\times (4k+4)$ kernel matrix of the $2k+2$ points  given in the order of $\lambda_1,x_1, z_1, z_2, \lambda_2,x_2,\ldots, \lambda_k,x_k$.

	We perform the two rounds of transformations on $\M_{4k+4}$ as in Section \ref{subsub:main}, and write the final transformed matrix in a block form $$\M_{4k+4}^{(F)}=\left(M_{i,j}^{(F)}\right)_{1\leq i,j\leq k+1}.$$   %(This time we need to repeat the column/row operations for $k+1$ times instead of $k$, since we have $k+1$ diagonal blocks.) We also modify the entries according to  \eqref{operation of pfaffian}. Again we denote the new matrix using a superscript $(F)$ and write it in a block form $$\M_{4k+4}^{(F)}=\left(M_{i,j}^{(F)}\right)_{1\leq i,j\leq k+1}.$$  
	By the definition of $T_{3, k, n}^{[1]}$ in \eqref{t3i}, for $i\geq 2$, we have $\abs{z_1-\lambda_i}>(\log^{-1}n)/2$. Thus, for $(i,j)\neq (1,2)$, $M^{(F)}_{i,j}$ has the same bounds \eqref{order of diagonal after trans for GSE} for  $i=j$ and \eqref{2order of good off-diagonal after trans} for $i\neq j$.
	To express the estimate for $M^{(F)}_{1,2}$, 
	we define two function $f_n$ and $\bar f_n$   by 
	$$
	f_n(\lambda_1,z_1):=  \min \left\{n,\frac{1}{\abs{\lambda_1-z_1}}
	\right\}, \bar f_n:= f_n+n^{1/2}.
	$$
	Inspecting the proof of \eqref{2order of good off-diagonal after trans} in Lemma \ref{lem:round1}, we see that the off-diagonal block $ M^{(F)}_{1,2}$ can be bounded by
	\begin{align}\label{badbd}	M^{(F)}_{1,2}\lesssim \begin{bmatrix}
			n^{6/5}\log n & \bar f_n(\lambda_1,z_1) & n^{-1/5}\bar f_n(\lambda_1,z_1)  & 
			n^{-1/5}\bar f_n(\lambda_1,z_1) \\
			\bar f_n(\lambda_1,z_1) & n^{-1/5}f_n(\lambda_1,z_1) & n^{-2/5}  f_n(\lambda_1,z_1) & n^{-2/5} f_n(\lambda_1,z_1) \\
			n^{-1/5}\bar f_n(\lambda_1,z_1)  & n^{-2/5}f_n(\lambda_1,z_1) &n^{-1/5}f_n(\lambda_1,z_1)&n^{-2/5}f_n(\lambda_1,z_1) \\
			n^{-1/5}\bar f_n(\lambda_1,z_1)  & n^{-2/5}f_n(\lambda_1,z_1)&n^{-2/5}f_n(\lambda_1,z_1)&n^{-3/5}f_n(\lambda_1,z_1) 
		\end{bmatrix}.
	\end{align} We will not give a detailed proof of \eqref{badbd}, but rather point out that \eqref{badbd}  is related to \eqref{2order of good off-diagonal after trans}  by replacing  $\log n$ and   $n^{1/2}$  with $f_n$ and $\bar f_n$, respectively. In deriving \eqref{badbd} 
	we have also used the condition $2\sup(A_n)<\abs{z_1-\lambda_1}$ so that 
	$$
	\min\left\{\abs{\lambda_1-z_1}, \abs{\lambda_1-z_2},\abs{x_1-z_1}, \abs{x_1-z_2}\right\} \geq \frac{   2\abs{\lambda_1-z_1}  }{2}.
	$$
	As in Section \ref{sssec:offd}, we construct a $(4k+4) \times (4k+4)$ matrix $\U_{4k+4}$ which dominates $\M_{4k+4}$ in the entrywise sense. 
	We define $U_{4k+4}(\sigma)$ and $\mathbf{\Lambda}_{4k+4}$   similarly to \eqref{def of U} and \eqref{defSigma}, respectively. 
	
	Note  that \eqref{badbd} indicates that terms of the form 
	\begin{equation}\label{prodab}
		\int_{\lambda_1-\log^{-1}n}^{\lambda_1+\log^{-1} n} \left(f_n(\lambda_1,z_1)^a \times \bar f_n(\lambda_1, z_1)^b\right)\mathrm{d}z_1
	\end{equation}
	will appear in  the integral of $U_{4k+4}(\sigma)$,  for certain  integers $a,b\geq 0$.
	(The relevant combinations of $(a,b)$ will be indicated in Table \ref{table:kab} below.) 
	
	To get the upper bound for \eqref{prodab}, we let $\kappa(a,b)$ be the unique non-negative real number which satisifies the following inequality for some $C>0$,
	\begin{equation}\label{kab}
		\log^{-C} n
		\lesssim\left(\int_{-\log^{-1}n}^{\log^{-1}n} 
		(n^{1/2}+ \min\{n, \frac{1}{x}\} )^{a} \min\{n, \frac{1}{x}\}^{b}\mathrm{d}x \right) n^{-(\kappa(a,b)	+a/2)} 
		\lesssim \log^C n.
	\end{equation}
	%In other words, $\kappa(a,b)$ gives the polynomial exponent for the ratio of \eqref{prodab} to $n^{a/2}$. %(We define $\kappa$ in this way to make comparisons easier with the bounds in the last section.) 	
	A direct computation gives the  Table \ref{table:kab} below.  The combinations (1,3) and (2,3) are not needed so their values $\kappa(a,b)$ are omitted.
	\begin{table}[htbp]
		% \label{table:kab}
		\begin{center}
			\begin{tabular}{ |c|c|c|c|c| }
				\hline
				& $b=0$ & $b=1$ & $b=2$ & $b=3$ \\ 
				\hline
				$a=0$ & 0 & 0 & 1& 2 \\ 
				\hline
				$a=1$ & 0 & ${1}/{2}$ & ${3}/{2}$ & $\backslash$ \\ 
				\hline
				$a=2$ & 0 & 1 & 2& $\backslash$ \\
				\hline
				%	\caption{The value of $\kappa(a,b)$}
			\end{tabular}
			\vspace{1pt}
			\caption{The value of $\kappa(a,b)$ for relevant pairs $(a,b)$. }
			\label{table:kab}
		\end{center}
	\end{table} 
	
	%We now introduce a few more notations to express the upper bound$$
	%U_{4k+4}(\sigma)=\prod_{i=1}^{2k+2}  \U_{4k+4}(\sigma_{2i-1},\sigma_{2i})
	%.$$ 
	For sets like $O_{1}$ and $O_{2} $ defined in the last section,
	we use a bar to denote their restriction to the  off-diagonal $4\times 4$ block $(\mathbf{\Lambda}_{4k+4}(i,j) )_{1\leq i\leq 4, 5\leq j\leq 8}$, and use a prime to denote its complement. For instance, 
	\begin{equation}\label{exo1}
		\bar O_{1}=O_{1}\cap \left\{(i,j): 
		\lceil i/4\rceil=1, \, \lceil j/4 \rceil =2 
		\right\}, \quad  O_1'=O_1\backslash \bar O_1. 
	\end{equation}
	We define two quantities $Y_1(\sigma)$ and $Y_2(\sigma)$, corresponding to the exponents $a$ and $b$ in \eqref{prodab},
	\begin{equation}\label{defy1y2}
		\begin{split}
			Y_1(\sigma)&=\abs{\bar O_2}+\abs{\bar O_{3} },\\
			Y_2(\sigma)&=
			\abs{\bar O_{4} }+\abs{\bar O_5}+\abs{\bar O_6}.
		\end{split}
	\end{equation}
	The comparison between \eqref{2order of good off-diagonal after trans} and \eqref{badbd}  as well as the definition of the function $\kappa(\cdot, \cdot)$ makes it clear that, for some constant  $C>0$,
	\begin{equation}
		\int_{z_1 \in T^{[1]}_{3,k,n}}U_{4k+4}(\sigma)\mathrm{d}z_1 \lesssim
		\left(\log^C n\right)   n^{\ord_3}, 
	\end{equation}
	where
	\begin{equation}
		\ord_3=\ord_1+\kappa(Y_1(\sigma),
		Y_2(\sigma)),
	\end{equation}
	and $\ord_1$ takes the same expression as \eqref{ord1}.  
	
	Recall from Section \ref{sssec:offd} that in our analysis of off-diagonal part of $L_{k,n}$, we established $\ord_1<6k/5$ to  ensure the associated error converges to zero. Here, since we have $(k+1)$ pairs of small gaps, in order to prove \eqref{2error}, it suffices to show
	\begin{equation}\label{ord2bdd}
		\ord_3<6(k+1)/5.
	\end{equation}
	As in the right hand side of \eqref{equicond}, we set
	\begin{equation}\label{y3bary3}
		\begin{split}
			Y_3(\sigma)&:= \frac{3}{2}\abs{O_{3} }+2\abs{O_{4,2} }
			+\frac{5}{2}\abs{O_{5,1}}
			+3\abs{O_{5,2}}+4\abs{O_6}
			+5\abs{O_2}+ 6\abs{O_{4,1} },\\
			\bar Y_3(\sigma)&:=\frac{3}{2}\abs{\bar O_{3} }+2\abs{\bar O_{4,2} }
			+\frac{5}{2}\abs{\bar O_2}
			+3\abs{\bar O_{5,2}}+4\abs{\bar O_6}+5\abs{\bar O_{5,1}}+ 6\abs{\bar O_{4,1} },
		\end{split}
	\end{equation}
	and let 
	\begin{equation}\label{y'3def}
		Y'_3(\sigma):=Y_3(\sigma)-\bar Y_3(\sigma).
	\end{equation}
	% Here we have arranged the terms in the ascending order of their coefficients.  
	Repeating the analysis that leads to \eqref{equicond}, we find that \eqref{ord2bdd} 
	is equivalent to 
	\begin{equation}\label{x3-}
		Y_3(\sigma)	-\abs{O_1}>5\kappa(Y_1(\sigma),Y_2(\sigma)).
	\end{equation}
	The proofs  for \eqref{equicond} show that $	Y_3(\sigma)	-\abs{O_1}>0$ always holds. Therefore, 
	we only need to consider the situations where $\kappa(Y_1(\sigma),Y_2(\sigma))>0$, which,	by  Table \ref{table:kab} above, include the cases where $Y_1(\sigma)=0$ and $
	Y_2(\sigma)\in \{2,3\}$; and also both $Y_1(\sigma), Y_2(\sigma)\in \{1,2\}$. 
	
	To  prove \eqref{x3-}, we rewrite it as
	\begin{equation}\label{twoparts}
		\left(Y_3'(\sigma)-\abs{O_1'}\right)+\left(\bar Y_3(\sigma)-\abs{\bar O_1}-5\kappa(Y_1(\sigma),Y_2(\sigma))\right)
		>0.
	\end{equation}
	The first part $Y_3'(\sigma)-\abs{O_1'}$ in \eqref{twoparts} can be controlled by the following 
	\begin{lem}\label{lem:o1'y3'} We always have
		\begin{equation}\label{o1new-1}
			\abs{O'_1}\leq Y_3'(\sigma).
		\end{equation}
		Moreover, if $\abs{\bar O_1}+Y_1(\sigma)>0$ and $\abs{\bar O_1}+Y_1(\sigma)+Y_2(\sigma)$ is odd, then
		\begin{equation}\label{o1new0}
			\abs{O'_1}< Y'_3(\sigma).
		\end{equation}
	\end{lem}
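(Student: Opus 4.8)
The plan is to prove Lemma~\ref{lem:o1'y3'} by a purely combinatorial argument mirroring the proof of~\eqref{o1eq1} in Section~\ref{sssec:offd}, but now keeping separate track of the edges lying in the ``bad'' off-diagonal $4\times4$ block between the first two blocks $B_1=\{1,2,3,4\}$ and $B_2=\{5,6,7,8\}$; this is exactly the input needed for the decomposition~\eqref{twoparts}. I identify $\sigma$ with the perfect matching $\mathcal M$ on $[4k+4]$ determined by $\mathbf{\Lambda}_{4k+4}(i,j)=\mathbbm{1}[\{i,j\}\in\mathcal M]$ for $i<j$; each block $B_\ell=\{4\ell-3,\dots,4\ell\}$ ($1\le\ell\le k+1$) then carries $h_\sigma(\ell)$ inter-block edges, an even number by~\eqref{sigsum}. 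An $O'_1$-edge is an edge $\{4\ell_1-3,4\ell_2-3\}\in\mathcal M$ joining two type-$1$ vertices with $(\ell_1,\ell_2)\ne(1,2)$; for such an edge both $h_\sigma(\ell_1)$ and $h_\sigma(\ell_2)$ are at least $2$, so each of $B_{\ell_1},B_{\ell_2}$ carries a further inter-block edge lying in $\bigcup_{j\ge2}O_j$.

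To prove~\eqref{o1new-1} I would reproduce the bookkeeping of~\eqref{hi}--\eqref{hnsum2}: set up a map $f$ from the set $\widehat O'_1$ of blocks meeting an $O'_1$-edge (the analogue of~\eqref{hato1def}) into $\bigcup_{j\ge2}O_j$, sending each such block to one of its further inter-block edges. An edge of $\bigcup_{j\ge2}O'_j$ absorbs at most two pointers (its two endpoint blocks), while all the bad edges together absorb at most two pointers, because only $B_1$ and $B_2$ can point into the bad block; since $|\widehat O'_1|=2|O'_1|$, this gives the crude bound $|O'_1|\le\sum_{j\ge2}|O'_j|+1$. Upgrading this to the sharp inequality $|O'_1|\le Y'_3$ is a short case distinction according to whether $B_1,B_2\in\widehat O'_1$ and whether their $f$-images are bad edges: if, say, $B_1\in\widehat O'_1$ and $f(B_1)$ is a bad edge, then the $O'_1$-edge at $B_1$ has its other endpoint in a block $B_{\ell'}$ with $\ell'\ge3$, and $B_{\ell'}\in\widehat O'_1$ is then forced to point to a genuinely good edge, recovering the missing unit. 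The rest of the slack is supplied by the fact that all coefficients in $Y'_3$ exceed $1$, together with the large coefficients $5,6$ on $|O'_2|,|O'_{4,1}|$ and the bounds $|\bar O_2|,|\bar O_{5,1}|\le2$, which control the $\pm\tfrac{5}{2}$ corrections created by the definition of $\bar Y_3$ in~\eqref{y3bary3}.

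For the strict inequality~\eqref{o1new0} I would feed in the parity hypothesis. Writing $m:=|\bar O_1|+Y_1(\sigma)+Y_2(\sigma)$ for the number of edges of $\mathcal M$ joining $B_1$ and $B_2$, the assumptions are that $m$ is odd and that at least one of the type-$1$ vertices $1,5$ is matched inside the bad block. The second assumption already forbids both $B_1$ and $B_2$ from lying in $\widehat O'_1$, so at most one pointer can land on a bad edge and the crude bound tightens to $|O'_1|\le\sum_{j\ge2}|O'_j|$; summing~\eqref{sigsum} over $\ell\in\{1,2\}$ and using that each $h_\sigma(\ell)$ is even while $m$ is odd then produces a further inter-block edge outside the bad block that is not in the image of $f$, and this extra edge turns the inequality of the previous step into a strict one, $|O'_1|<Y'_3$.

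The delicate point is precisely this passage from the crude bound $|O'_1|\le\sum_{j\ge2}|O'_j|+1$ to $|O'_1|\le Y'_3$ and its strict refinement: since the auxiliary edges forced by the degree and parity constraints may themselves join $B_1$ and $B_2$, the clean ``every pointee absorbs at most two pointers'' accounting used for~\eqref{o1eq1} degrades by a bounded amount, and recovering this loss requires the short case analysis near $B_1,B_2$ together with the inflated weights of $Y'_3$. Everything else is a routine transcription of the argument surrounding~\eqref{hi}--\eqref{hnsum2}.
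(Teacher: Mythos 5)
Your overall strategy—re-running the $h_\sigma$-parity/degree argument from the proof of \eqref{o1eq1} while separating off the edges in the first off-diagonal $4\times4$ block—is indeed the approach the paper takes (via the auxiliary set $\widehat{O}_{1,NEW}$ and the inequality $|\widehat{O}_{1,NEW}\cap\{1,2\}|\leq 1$). However, as a proof of the two asserted inequalities your sketch has concrete problems.

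First, the claim that ``the second assumption already forbids both $B_1$ and $B_2$ from lying in $\widehat{O}'_1$'' is false. The hypothesis $|\bar O_1|+Y_1(\sigma)>0$ only guarantees that \emph{some} edge among $\bar O_1\cup\bar O_2\cup\bar O_3$ is present, hence that at least one of the type-$1$ vertices $1,5$ is matched inside the bad block. For example, if the only bad-block edge touching a type-$1$ vertex is $(1,6)\in\bar O_2$, then vertex $5$ is free and may well lie on an $O'_1$-edge to a block $\ell'\geq 3$, so $B_2\in\widehat{O}'_1$. The correct statement is the paper's \eqref{o1new2}: at most one of $B_1,B_2$ lies in $\widehat{O}_{1,NEW}$. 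Your downstream conclusion ``at most one pointer can land on a bad edge'' does follow from the correct (weaker) premise, so the final bound $|O'_1|\leq\sum_{j\geq 2}|O'_j|$ still comes out—but the stated justification is wrong.

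Second, and more seriously, the case analysis you propose for \eqref{o1new-1} does not close the gap. The crude bound $|O'_1|\leq\sum_{j\geq 2}|O'_j|+1$ together with the coefficient lower bound ``$\geq 3/2$'' on $Y'_3$ leaves one genuinely threatening configuration unresolved: $|O'_1|=2$, $|O'_3|=1$, $|O'_j|=0$ for $j\notin\{1,3\}$, where $Y'_3=\tfrac32<2=|O'_1|$. Your ``recovering the missing unit'' explanation—$B_{\ell'}\in\widehat{O}'_1$ must point to a good edge—does not rule this out, since $B_{\ell'}$'s pointer is already accounted for in the crude count; one must argue that the \emph{single} $O'_3$-edge cannot absorb two pointers from $\widehat{O}'_1$ (the paper phrases this as a parity contradiction: some block $\geq 3$ would have $h_\sigma=1$, impossible). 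That step is essential and is missing from your sketch.

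Finally, the remark about ``$\pm\tfrac52$ corrections controlled by $|\bar O_2|,|\bar O_{5,1}|\leq 2$'' is chasing a typo in \eqref{y3bary3}: the coefficients of $|O_2|$ and $|O_{5,1}|$ are accidentally swapped between $Y_3$ and $\bar Y_3$, but comparing with \eqref{equicond} shows they should match. After correcting this, $Y'_3=Y_3-\bar Y_3$ is simply the $\eqref{equicond}$-weighted sum over the $O'_j$'s, with no stray terms, so there are no corrections to control.
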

	% We remark that the equality in \eqref{o1new-1} may hold in some cases. Indeed, consider the example where $\sigma \in \mathcal{P}_{16}$ such that $(\sigma_{2i-1}, \sigma_{2i})_{1\leq i\leq 8}$ is given by
	%	$$
	%	(1,9), (2,3),(4,8),(5,13), (6,7), (10,12), (11,15), (14,16).
	%	$$
	%	Then one can compute that $\abs{O_1'}=Y'_3(\sigma)=2$. 
	
	\begin{proof}
		Define a set $
		\widehat{O}_{1,NEW}
		$ so that it equals $\widehat{O}_1$ (defined in \eqref{hato1def}) if $(1,2)\notin O_1$, and $\widehat{O}_1\backslash
		\{0,1\}$ otherwise. We have the relation 
		\begin{equation}\label{o1new1}
			\abs{\widehat{O}_{1,NEW}}=2\abs{O_1'}.
		\end{equation}
		Moreover, 
		\begin{equation}\label{o1new2}
			\mbox{if }\abs{\bar O_1}+Y_1(\sigma)=\sum_{i=1}^3 \abs{\bar O_i}>0, \quad \mbox{then }    \abs{{\widehat{O}_{1,NEW}} \cap \{1,2\}}\leq 1.
		\end{equation}
		Recall the definition of the function $h_{\sigma}$ in \eqref{hsigmadef}. Similarly to \eqref{hnsum} and \eqref{hnsum2}, we have
		\begin{equation}\label{o1new3}
			\begin{split}
				\sum_{3\leq \ell \leq k} h_{\sigma}(\ell)&\geq    \sum_{\ell \in \widehat{O}_{1,NEW}\cap [3, k] } h_{\sigma}(\ell)  \geq 2\abs{\widehat{O}_{1,NEW}\cap [3,k]}\\
				&=2\left( \abs{\widehat{O}_{1,NEW}}-\abs{{\widehat{O}_{1,NEW}} \cap \{1,2\}}\right),
			\end{split}
		\end{equation}
		and
		\begin{equation}\label{o1new4}
			\begin{split}
				\sum_{3\leq \ell \leq k} h_{\sigma}(\ell)&=\sum_{\lceil  i/4 \rceil \leq 2< \lceil j/4\rceil} \mathbf{\Lambda}_{4k}(i,j)+
				2      \sum_{3\leq \lceil  i/4 \rceil < \lceil j/4\rceil} \mathbf{\Lambda}_{4k}(i,j)\\
				& =2\left(\sum_{\lceil  i/4 \rceil \leq 2< \lceil j/4\rceil} \mathbf{\Lambda}_{4k}(i,j)+
				\sum_{3\leq \lceil  i/4 \rceil < \lceil j/4\rceil} \mathbf{\Lambda}_{4k}(i,j)\right)-\sum_{\lceil  i/4 \rceil \leq 2< \lceil j/4\rceil} \mathbf{\Lambda}_{4k}(i,j)\\
				&\leq 2\sum_{i=1}^6 \abs{O_i'}-\abs{{\widehat{O}_{1,NEW}} \cap \{1,2\}}.
			\end{split}
		\end{equation}
		Combining \eqref{o1new1}, \eqref{o1new3} and \eqref{o1new4}, we get
		\begin{equation}
			\abs{O_1'}\leq \sum_{i=2}^6 \abs{O_i'}+\frac{1}{2} \abs{{\widehat{O}_{1,NEW}} \cap \{1,2\}}.
		\end{equation}
		Thus in any case, we have
		\begin{equation}\label{o1new5}
			\abs{O_1'}\leq \sum_{i=2}^6 \abs{O_i'}+1.
		\end{equation}
		Furthermore, if $\sum_{i=1}^3\abs{\bar O_i}>0$, then by \eqref{o1new2}, we get
		\begin{equation}\label{o1new6}
			\abs{O_1'}\leq \sum_{i=2}^6 \abs{O_i'}.
		\end{equation}
		Equation		 \eqref{o1new0} now follows directly from \eqref{y'3def}, \eqref{o1new6} and the implication
		$$
		\abs{\bar O_1}+Y_1(\sigma)+Y_2(\sigma) \mbox{ is odd}
		\Rightarrow \sum_{i=1}^6 \abs{O_i'}>0,
		$$
		which is due to the evenness of $h_{\sigma}(1)$ and $h_{\sigma}(2)$  (see the line below \eqref{hsigmadef}).
		
		To prove \eqref{o1new-1}, we note that all  coefficients in  front of $\abs{O_{i}}$ and $\abs{\bar O_i}$ in the right hand side of \eqref{y3bary3} are at least 2, except for $\abs{O'_3}$. Hence, the only possible way  to make
		$Y_3'(\sigma)<\abs{O_1'}$ is by requiring $\abs{O_1'}=2$, $\abs{O_3'}=1$, and $\abs{O'_i}=0$ for $i=2,4,5,6$. This implies that, there  exists $i_0>2$ such that $h_{\sigma}(i_0)=1$, which again contradicts with the property that $h_{\sigma}(\ell)$ is even for all $\ell$. Therefore, we prove  \eqref{o1new-1}.

	\end{proof}
	The second part of  the right hand side  of \eqref{twoparts} is controlled by the following  Lemma \ref{lem:y1bar}. Combining this result and  Lemma \ref{lem:o1'y3'},
	we deduce \eqref{twoparts} and thus   \eqref{ord2bdd}. Therefore, we finish \eqref{2error}. 
	
	\begin{lem}\label{lem:y1bar}
		If   $\abs{\bar O_1}+Y_1(\sigma)>0$ and $\abs{\bar O_1}+Y_1(\sigma)+Y_2(\sigma)$ is odd, then we have
		\begin{equation}\label{bary3-o2}
			\bar Y_3(\sigma)-\abs{\bar O_1}\geq 5\kappa(Y_1(\sigma),Y_2(\sigma)).
		\end{equation}
		Otherwise,     we have 
		\begin{equation}\label{bary3-o1}
			\bar Y_3(\sigma) -\abs{\bar O_1}>  5\kappa(Y_1(\sigma),Y_2(\sigma)).
		\end{equation}
	\end{lem}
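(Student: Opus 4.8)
The plan is to read the inequalities \eqref{bary3-o2}--\eqref{bary3-o1} as a finite optimization over partial matchings sitting in a single $4\times4$ block, and then to dispose of the finitely many relevant pairs $(Y_1(\sigma),Y_2(\sigma))$ one at a time. First I would unpack the barred quantities combinatorially. By \eqref{exo1} every set $\bar O_i$ lives in the off-diagonal $4\times4$ block with rows $1,2,3,4$ and columns $5,6,7,8$. Since $\sigma$ is a permutation, the $1$-entries of $\mathbf\Lambda_{4k+4}$ inside this block form a partial matching $P$ of $\{1,2,3,4\}$ with $\{5,6,7,8\}$; writing a matched pair in the ``block coordinates'' $(i,j-4)\in\{1,2,3,4\}^2$, the type of the pair (the set $\bar O_{(\cdot)}$ it belongs to) is fixed by the residues of $i$ and $j-4$ modulo $4$, exactly as in Figure \ref{fig2}. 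Thus $\abs P=\abs{\bar O_1}+Y_1(\sigma)+Y_2(\sigma)\le4$, and by \eqref{y3bary3} one has $\bar Y_3(\sigma)-\abs{\bar O_1}=\sum_{p\in P}w(p)$, where $w$ is the weight of the type of $p$, namely $w(O_1)=-1$, $w(O_3)=\tfrac32$, $w(O_{4,2})=2$, $w(O_{5,2})=3$, $w(O_2)=\tfrac52$, $w(O_6)=4$, $w(O_{5,1})=5$, $w(O_{4,1})=6$.

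Two position-counting facts then trim the case list. First, the only slot of type $O_1$ is the block coordinate $(1,1)$, which shares a row or a column with every $O_2$- or $O_3$-slot; hence $\abs{\bar O_1}=1$ forces $Y_1(\sigma)=0$, so among the pairs with $\kappa>0$ the case $\abs{\bar O_1}=1$ only meets $(Y_1,Y_2)\in\{(0,2),(0,3)\}$. Second, the slot $(1,c)$ is always of type $O_1$, $O_2$, or $O_3$, so no $Y_2$-type slot can use row $1$; in particular $Y_2(\sigma)\le3$, and in a perfect matching ($\abs P=4$) with $\abs{\bar O_1}=0$ both row $1$ and column $1$ must carry $O_2$- or $O_3$-slots, whence $Y_1(\sigma)\ge2$ there (this also explains why $(Y_1,Y_2)=(1,3),(2,3)$ are irrelevant, as noted after Table \ref{table:kab}).

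The core of the proof is a short list of packing bounds for the $Y_2$-slots of a $4\times4$ partial matching, each verified by a finite inspection of which slots are mutually compatible: a $Y_1$-slot has weight $\ge\tfrac32$, a $Y_2$-slot has weight $\ge2$; any two compatible $Y_2$-slots have weights summing to $\ge6$, with equality only when they occupy precisely rows $\{3,4\}$ and columns $\{3,4\}$ in block coordinates (so that any coexisting $Y_1$-slot is forced into rows/columns $\{1,2\}$, hence to be of type $O_2$ with weight $\tfrac52$); and any three compatible $Y_2$-slots have weights summing to $\ge12$. Running through the pairs $(Y_1,Y_2)$ with $\kappa>0$, i.e.\ $(0,2),(0,3),(1,1),(1,2),(2,1),(2,2)$, these bounds give $\bar Y_3(\sigma)-\abs{\bar O_1}\ge 6-\abs{\bar O_1}$, $\ge12-\abs{\bar O_1}$, $\ge\tfrac72$, $\ge\tfrac{15}2$, $\ge5$, $\ge11$ respectively (with $\abs{\bar O_1}=0$ automatic in the last four), to be compared against $5\kappa(Y_1,Y_2)=5,10,\tfrac52,\tfrac{15}2,5,10$ from Table \ref{table:kab}; the $\ge11$ for $(2,2)$ comes from a short enumeration of the perfect matchings avoiding the $(1,1)$-slot. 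For the three configurations lying in the first branch of the statement --- namely $(0,2)$ with $\abs{\bar O_1}=1$, and $(1,2)$, $(2,1)$, all of which have $\abs{\bar O_1}+Y_1(\sigma)>0$ and $\abs{\bar O_1}+Y_1(\sigma)+Y_2(\sigma)=\abs P$ odd --- these estimates already yield $\bar Y_3(\sigma)-\abs{\bar O_1}\ge5\kappa(Y_1(\sigma),Y_2(\sigma))$, which is \eqref{bary3-o2}; in all remaining configurations $\abs P$ is even or $\abs{\bar O_1}+Y_1(\sigma)=0$, and the estimates are strict, giving \eqref{bary3-o1}. Combined with Lemma \ref{lem:o1'y3'} this establishes \eqref{twoparts}, hence \eqref{x3-} and \eqref{ord2bdd}, and therefore \eqref{2error}.

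The step I expect to be most delicate is the verification of the packing bounds, especially that two cheapest $Y_2$-slots cannot be packed below total weight $6$ and three below $12$: the naive estimate $\bar Y_3(\sigma)-\abs{\bar O_1}\ge\tfrac32Y_1(\sigma)+2Y_2(\sigma)-\abs{\bar O_1}$ is not enough --- for $(Y_1,Y_2)=(0,2)$ it yields only $4<5$ --- so one genuinely must use that the cheap slots $O_{4,2}$ at $(3,3)$ and $O_{5,2}$ at $(3,4),(4,3)$ lie on overlapping rows and columns and cannot be combined, and likewise exploit the perfect-matching constraint in the $(2,2)$ case. This is a bounded enumeration, but it must be carried out with care; the remainder is just bookkeeping against Table \ref{table:kab}.
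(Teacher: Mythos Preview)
Your approach is correct and is essentially the same as the paper's: both recast \eqref{bary3-o2}--\eqref{bary3-o1} as a finite minimization over $4\times4$ partial permutation matrices with the weight table you call $w$ (the paper writes it as a matrix $Z$) and then dispose of the finitely many $(\abs{\bar O_1},Y_1,Y_2)$ with $\kappa>0$ by direct inspection. The paper spells out only the three odd-$\abs P$ cases $(\abs{\bar O_1},Y_1,Y_2)\in\{(0,1,2),(0,2,1),(1,0,2)\}$ needed for \eqref{bary3-o2} and says \eqref{bary3-o1} is similar; your packing bounds (two compatible $Y_2$-slots sum to $\ge6$ with equality only on rows/columns $\{3,4\}$, three sum to $\ge12$) and your observation that $\abs{\bar O_1}=1\Rightarrow Y_1=0$ give a slightly more organized way to cover all cases at once, but the content is the same.
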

	\begin{proof}%[Proof of Lemma \ref{lem:y1bar}]
		We shall only give a detailed proof of \eqref{bary3-o2}. The argument for \eqref{bary3-o1} is similar. We first rewrite \eqref{bary3-o2} in a more convenient form. Let $Z$ be a  $4\times 4$ matrix   given by
		\begin{align}\label{defz}
			Z=
			\begin{bmatrix}
				0 & 5/2 & 3/2 & 
				3/2 \\
				5/2 & 6 & 5& 5 \\
				3/2 & 5 & 2& 3 \\
				3/2 & 5 & 3& 4 
			\end{bmatrix}.
		\end{align}
		We observe that
		\begin{equation}\label{minz}
			\min\left\{Z(i,j):i=1<j \mbox{ or }j=1<i\right\}=\frac{3}{2},\,\, \min\left\{Z(i,j):i,j\geq 2\right\}=2.
		\end{equation}
		Let $\mathcal{X}_0$ be the set of $4\times 4$ doubly sub-stochastic matrix with entries being 0 or 1,
		$$
		\mathcal{X}_0=\big\{  X\in \{0,1\}^{4\times 4}: \forall \, 1\leq j\leq 4, \sum_{i'=1}^4 X(i',j)\leq 1; \forall \, 1\leq i\leq 4, \sum_{j'=1}^4 X(i,j')\leq 1\big\}.
		$$
		Given three integer $a_1,a_2,a_3$, we define $\mathcal{X}(a_1,a_2,a_3)$ to be the set
		$$
		\big\{X\in \mathcal{X}_0:  X(1,1)=a_1,\, \sum_{i=2}^4 X(1,i)+\sum_{i=2}^4 X(i,1)=a_2, \, \sum_{i=2}^4 \sum_{j=2}^4 X(i,j)=a_3  \big\}.
		$$
		Clearly, given the values of $\abs{\bar O_1}, Y_1(\sigma), Y_2(\sigma)$,   the $4\times4$ matrix $(\mathbf{\Lambda}_{4k+4}(i,j))_{1\leq i\leq 4, 5\leq j\leq 8}$ has to belong to the set $\mathcal{X}(\abs{\bar O_1}, Y_1(\sigma), Y_2(\sigma))$. 
		Hence, by the definition of $\bar Y_3(\sigma)$,  to prove \eqref{bary3-o2}, it suffices to prove  
		\begin{equation}\label{zx}
			\min_{X\in \mathcal{X}(\abs{\bar O_1}, Y_1(\sigma), Y_2(\sigma))} \sum_{1\leq i,j\leq 4} Z(i,j)X(i,j) \geq \abs{\bar O_1}+\kappa(Y_1(\sigma)+Y_2(\sigma)).
		\end{equation}
		The condition  `$\abs{\bar O_1}+Y_1(\sigma)>0$ and $\abs{\bar O_1}+Y_1(\sigma)+Y_2(\sigma)$ is odd'
		includes three cases.
		
		Case \textcircled{1}: $\abs{\bar O_1}=0$, $Y_1(\sigma)=1$ and $Y_2(\sigma)=2$. Since $\kappa(1,2)=3/2$, \eqref{zx}  becomes
		\begin{equation}\label{zx2}
			\min_{X\in \mathcal{X}(0,1,2) } \sum_{1\leq i,j\leq 4} Z(i,j)X(i,j) \geq 15/2. 
		\end{equation}
		The condition $X\in \mathcal{X}(0,1,2)$ implies 
		$$\sum_{i=2}^4 \left(X(i,1)+X(1,i)\right)=1,$$ and that there are two 
		distinct pairs $(i_1,j_1), (i_2,j_2)\in [2,4]\times [2,4]$ such that $X(i_1,j_1)=X(i_2,j_2)=1$. 
		We consider the following two scenarios. 
		\begin{itemize}
			\item If $X(3,3)=1$, say,  $(i_1,j_1)=1$, then we must have $(i_2,j_2) \neq (3,4)$ or $(4,3)$. Hence $Z(i_2,j_2)\geq 4$.
			Consequently, by \eqref{minz},
			\begin{equation*}
				\begin{split}
					\sum_{1\leq i,j\leq 4} Z(i,j)X(i,j)
					&\geq \frac{3}{2} \left( \sum_{i=2}^4 \left(X(i,1)+X(1,i)\right) \right) +Z(3,3)+Z(i_2,j_2)\\
					&\geq \frac{3}{2}+2+4=\frac{15}{2}.
				\end{split}
			\end{equation*}
			\item If $X(3,3)=0$, then both $(i_1,j_1)$ and $(i_2,j_2)$ are not equal to $(3,3)$. Thus $Z(i_1,j_1), Z(i_2,j_2)\geq 3$, which implies that, by \eqref{minz},
			$$
			\sum_{1\leq i,j\leq 4} Z(i,j)X(i,j) \geq \frac{3}{2}+3+3=\frac{15}{2}.
			$$
			
		\end{itemize}Hence we have verified \eqref{zx2} in Case \textcircled{1}. 
		
		Case \textcircled{2}: $\abs{\bar O_1}=0$, $Y_1(\sigma)=2$ and $Y_2(\sigma)=1$. Then $\kappa(Y_1(\sigma),Y_2(\sigma))=1$. Using \eqref{minz},
		$$
		\sum_{1\leq i,j\leq 4} Z(i,j)X(i,j) \geq \frac{3}{2}Y_1(\sigma)+2Y_2(\sigma)=5=5\kappa(Y_1(\sigma),Y_2(\sigma)),
		$$
		proving \eqref{zx} in this case. 
		
		Case \textcircled{3}: $\abs{\bar O_1}=1$, $Y_1(\sigma)=0$ and $Y_2(\sigma)=2$. Then $\kappa(Y_1(\sigma),Y_2(\sigma))=1$. From the arguments in Case \textcircled{1}, we see that
		$$
		\min_{X\in \mathcal{X}(1,0,2) } \sum_{1\leq i,j\leq 4} Z(i,j)X(i,j)
		\geq \min\{2+4,3+3\}=6=\abs{\bar O_1}+5\kappa(Y_1(\sigma),Y_2(\sigma)).
		$$

		We have verified \eqref{zx} in all possible cases, and thus we deduce \eqref{bary3-o2}. The other statement \eqref{bary3-o1} can be proved similarly. %This finish Lemma \ref{lem:y1bar}.

	\end{proof}

	\section{The case of GOE }\label{sec:pfgoe}
	
	\subsection{Pfaffian structure of GOE } GOE is a Gaussian measure on the space of Hermitian matrices. The joint density of eigenvalues of GOE is given by \eqref{jpdf of Gauss ensemble} with $\beta=1$. In this section,  we use a superscript (1) to denote various quantities regarding GOE.
	The $k$-point correlation function of eigenvalues of GOE is \cite{M}
	\begin{align}
		\rho^{(1)}_{k,n}(\lambda_1,...,\lambda_k) = \pfa\left(JK_n^{(1)}(\lambda_i,\lambda_j)\right)_{1\leq i,j\leq k},
	\end{align}
	where 
	\begin{align}
		\begin{aligned}
			J = \begin{bmatrix}
				0  & 1 \\
				-1 & 0
			\end{bmatrix},\quad 
			K_n^{(1)}(x, y) 
			=\begin{bmatrix}
				S^{(1)}_{n}(x, y)+\alpha_n(x) & V^{(1)}_{n}(x, y) \\
				J^{(1)}_{n}(x, y) & S^{(1)}_{n}(y, x)+\alpha_n(y)
			\end{bmatrix}. 
		\end{aligned}
	\end{align}
	Here, 
	\begin{align*}
		\alpha_n(x)=&
		\left\{
		\begin{aligned}
			&\sqrt{2m+1}\varphi_{2 m}\left(\sqrt{2m+1}x\right) \big / \int_{-\infty}^{\infty} \varphi_{2 m}(t) \mathrm{d} t,& &\quad \text{if } n=2m+1,\\
			&0,& &\quad \text{if } n=2m,
		\end{aligned}
		\right.
	\end{align*}
	and
	\begin{align*}
		\begin{aligned}
			S_{n}^{(1)}(x, y) & =\sqrt{n}
			\sum_{j=0}^{n-1} \varphi_{j}\left(\sqrt{n}x\right)\varphi_{j}\left(\sqrt{n}y\right)+\frac{n}{2} \varphi_{n-1}\left(\sqrt{n}x\right) \int_{-\infty}^{\infty} \varepsilon(\sqrt{n}y-t) \varphi_{n}(t) \mathrm{d} t, \\
			V_{n}^{(1)}(x, y) & =-\frac{\partial}{\partial y} S_{n}^{(1)}(x, y), \\
			J_{n}^{(1)}(x, y) & =\int_{-\infty}^{\infty} \varepsilon(x-t) S^{(1)}_{n}(t, y) \mathrm{d} t-\varepsilon(x-y)+
			\gamma_n(x)-\gamma_n(y),
		\end{aligned}
	\end{align*}
	where  $$
	\gamma_n(x)  =\int_{-\infty}^{\infty} \varepsilon(x-t) \alpha_n(t) \mathrm{d} t,
	$$ and
	$ \varepsilon(x)=(1 / 2) \operatorname{sign}(x)$  is  $1 / 2$, $-1 / 2$  or $0$ according as $ x>0$, $x<0 $ or $ x= 0$,

	Let $\widehat{K}^{(1)}_n$ be the rescaled version of $K^{(1)}_n$ defined as in \eqref{rescaledkernel}. 
	As in the proof of Lemma \ref{Convergence of the rescaling kernel}, using the result in 
	\cite{KS}, we get
	\begin{align}
		\label{estimate of the GOE kernel}
		\abs{ 		\widehat{K}^{(1)}_{n}\left(x_0+\frac{x}{n \rho_{\semi}(x_0)},x_0+\frac{y}{n\rho_{\semi}(x_0)}\right)-K^{(1)}(x, y) }   \lesssim     \begin{bmatrix}
			n^{-1/2}  & n^{-1} \\
			n^{-1/2}   & n^{-1/2}
		\end{bmatrix}
	\end{align}
	uniformly for $\abs{x},\abs{y}\lesssim n^{-1/2}$. Here,  the limiting kernel 
	\begin{align}
		K^{(1)}(x,y) =& 
		\begin{bmatrix}
			K_{\sin}(x-y) & {\partial_x}K_{\sin}(x-y) \\
			\int_{0}^{x-y}K_{\sin}(t)\mathrm{d}t-\varepsilon(x-y) & K_{\sin}(x-y)
		\end{bmatrix}.\label{K1}
	\end{align} 
	In addition to the local scaling limit for $K_n^{(1)}$, we can also obtain global bounds for $K^{(1)}_n$ and its derivatives. Indeed, one can check that Lemma \ref{order of multi kernel} still holds true if we replace the GSE kernels $S_n, V_n,J_n$ with GOE kernels $S_n^{(1)}+\alpha_n, V^{(1)}_n, J_n^{(1)}$, respectively. We do not repeat the details.  
	
	\subsection{Estimates of Pfaffians}
	We follow the proof outline given in Section \ref{sec:pfoutline}. Let $A^{(1)}_n:=n^{-3/2}A$, $A\subset(0,+\infty)$. 
	As in the proof of GSE case, we separate  $L_{k,n}^{(1)}$ into the diagonal part $L_{k,n,D}^{(1)}$ and the off-diagonal part $L_{k,n,O}^{(1)}$. Again, we denote by $\M^{(1)}_{4k}$ the $4k \times 4k$ correlation matrix so that
	$$
	\rho^{(1)}_{2k,n}(\lambda_1,x_1,\ldots, \lambda_k,x_k)=\pfa \M^{(1)}_{4k}.
	$$ Let $M^{(1)}_{i,j}$ be the $(i,j)$-th block of $\M^{(1)}_{4k}$ with size $4\times 4$.  Using the estimate \eqref{K1}, we have  the uniform estimate for the diagonal part:	\begin{equation}\label{l1kn} 	
		\begin{split}
			& L_{k,n,D}^{(1)}(\lambda,\ldots, \lambda_k) \\
			=&\prod_{i=1}^{k}\int_{\lambda_i+A_n^{(1)}}\pfa  M_{i,i}^{(1)}\mathrm{d}x_i
			\\
			=& \prod_{i=1}^{k}\int_{n\rho_{\semi}(\lambda_i)A_n^{(1)}} n\rho_{\semi}(\lambda_i)\left(1-K^2_{\sin}(u) + {\partial_u}K_{\sin}(u)\left(\int^{u}_0K_{\sin}(t)\mathrm{d}t +\frac{1}{2}\right)\right)\mathrm{d}u\\
			&+O\left(n^{-1/2}\right)\\
			=& \prod_{i=1}^{k}\int_{n^{-1/2}\rho_{\semi}(\lambda_i)A} n\rho_{\semi}(\lambda_i)\left(
			\dfrac{\pi^2u}{6}+O\left(u^2\right)\right)\mathrm{d}u+O\left(n^{-1/2}\right) \\
			=&\left(\int_{A} \dfrac{u}{48\pi}\mathrm{d}u\right)^k \prod_{i=1}^{k} \left( 2\pi\rho_{\semi}(\lambda_i)\right)^3+O\left(n^{-1/2}\right).
		\end{split}
	\end{equation}
	To control the off-diagonal part, we have to make two rounds of transformations to $\M^{(1)}_{4k}$, similar to those  for GSE, which eventually change $\M^{(1)}_{4k}$ to  $\M^{(1,F)}_{4k}$.
	The only difference  is  that we need to replace the factors $n^{6/5}$  and $n^{-6/5}$ with $n^{3/2}$ and $n^{-3/2}$, respectively, 
	in the definition \eqref{operation of pfaffian}.
	Ultimately, we can bound $\M^{(1,F)}_{4k}$ from above by $\U^{(1)}_{4k}$ in the entrywise sense as in Lemma \ref{lem:round1}, while this time the diagonal block $U_{i,i}$ and off-diagonal block $U_{i,j}$ ($j>i$) are  given by 
	\begin{align}\label{order of GOE2}
		U_{i,i}^{(1)} =	
		\begin{bmatrix}
			0 & n &  n^{3/2} & 1 \\
			n &0 &  1 &  1\\
			n^{3/2} & 1 & 0& 1 \\
			1& 1& 1 &0
		\end{bmatrix}, \quad 
		U_{i,j}^{(1)} =	
		\begin{bmatrix}
			n^{3/2}\log n & n^{1/2} &  1 & 1 \\
			n^{1/2} & n^{-1/2} &  n^{-1} &  n^{-1}\\
			1 & n^{-1} & n^{-1/2} & n^{-1} \\
			1& n^{-1} & n^{-1} & n^{-3/2}
		\end{bmatrix}.
	\end{align}
	Define $U^{(1)}_{4k}(\sigma)$ similarly to \eqref{def of U} and  consider the set partitions given in Figure \ref{fig1} and Figure \ref{fig2}. According to the growth order of $n$ in $U_{i,i}^{(1)}$, we need to further decompose 
	\begin{equation}
		D_2:=D_{2,1}\cup D_{2,2},
	\end{equation}
	where
	\begin{equation*}
		\begin{split}
			D_{2,1}:=&\left\{(i,j)\in [4k]\times [4k]:
			\lceil i/4\rceil = 
			\lceil j/4\rceil, \mathbf{\Lambda}_{4k}^{(1)}(i,j)=1, i\equiv 1(\mod 4),\,j\equiv 3(\mod 4)\right\},\\
			D_{2,2}:=&\left\{(i,j)\in [4k]\times [4k]:
			\lceil i/4\rceil = 
			\lceil j/4\rceil, \mathbf{\Lambda}_{4k}^{(1)}(i,j)=1, i\equiv 1(\mod 4),\,j\equiv 4(\mod 4)\right\}.
		\end{split} 
	\end{equation*}
	By the estimates 
	\eqref{order of GOE2}, we get
	\begin{equation}
		U^{(1)}_{4k}(\sigma)\lesssim \left(\log^C n\right)  n^{\ord_4}, 
	\end{equation}
	where 
	\begin{equation}
		\ord_4= |D_1|+\frac{3}{2}|D_{2,1}|+\frac{3}{2} |O_1|+\frac{1}{2} |O_2|-\frac{1}{2}|O_4|-|O_5|-\frac{3}{2}|O_6|.        
	\end{equation}
	The constraint $2|O_1|+|O_2|+|O_{3} |+|D_1|+|D_2|=k$  thus implies that
	$\ord_4\leq 3k/2$. Moreover, the equality holds if and only if $\abs{D_{2,1}}=k$ and $\abs{O_i}=0$ for all $1\leq i\leq 6$.
	In particular, we have $$
	\ord_4<\frac{3k}{2}, \quad \forall\, \sigma \in \mathcal{P}_{4k}\backslash \mathcal{D}_{4k}.
	$$
	This implies that $L^{(1)}_{k,n,O}\to 0$ as $n\to\infty$, as desired. Analysis for $E^{(1)}_{1,k,n}$ and $E^{(2)}_{2,k,n}$ also follows from similar reasoning to those given for  GSE, and we omit further details.  Combining these estimates and recalling 
	\eqref{l1kn}, we complete the proof of Theorem \ref{small gaps of GbetaE} in the GOE case ($\beta$=1).


\begin{thebibliography}{1}
		\bibitem{AGZ}G. W. Anderson, A. Guionnet and  O. Zeitouni,  \emph{An introduction to random matrices}, Cambridge Studies in Advanced Mathematics, 118. Cambridge University Press, Cambridge, 2010.
		\bibitem{BB}G. Ben Arous and P. Bourgade, Extreme gaps between eigenvalues of random matrices, Ann. Probab. 41 (2013), no. 4, 2648--2681.		
		
		%	\bibitem{BFG}F. Bekerman, A. Figalli and A. Guionnet. Transport maps for $\beta$-matrix models and universality, Communications in mathematical physics, 338(2), 589-619, 2015. 
		
		%\bibitem{BBRZ}
		%V. Blomer, J. Bourgain, M. Radziwill, and Z. Rudnick. Small gaps
		%in the spectrum of the rectangular billiard. Ann. Sci. Ec. Norm. Super
		%50(5), 1283–1300, 2017.
		
		%\bibitem{BGS} O. Bohigas, M.-J. Giannoni, and C. Schmit. Spectral fluctuations of
		%classically chaotic quantum systems. In: Quantum Chaos and Statistical
		%Nuclear Physics, edited by Thomas H. Seligman and Hidetoshi Nishioka,
		%Lecture Notes in Physics, vol. 263. Springer, Berlin (1986), pp. 18–40.
		
		%	\bibitem{Borot} G. Borot, An introduction to random matrix theory, arXiv:1710.10792.
		
		%\bibitem{BEY} P. Bourgade, L. Erd\H{o}s, and H.-T. Yau. Universality of general $\beta$-ensembles. Duke Math. J., 163(6):1127-1190, 2014.
		
		%\bibitem{DNT} P. Deift, T. Nanda and C. Tomei, Ordinary differential equations and the symmetric eigenvalue problem, SIAM Journal on Numerical Analysis, 20(1), 1-22, 1983.
		
		
		
		
		\bibitem{B2}P. Bourgade,  Extreme gaps between eigenvalues of Wigner matrices.  J. Eur. Math. Soc. 24 (2022), no. 8, 2823--2873. 		
		%	\bibitem{Claeys} Claeys T, Glesner G, Minakov A, et al. Asymptotics for averages over classical orthogonal ensembles[J]. International Mathematics Research Notices, 2022, 2022(10): 7922-7966.
		
		%	\bibitem{Deift} P. Deift and D. Gioev, \emph{Random matrix theory: invariant ensembles and universality}, American Mathematical Society, 2009.
		
		%	\bibitem{ey}L. Erd\H{o}s and H.-T. Yau. Gap universality of generalized Wigner and $\beta$-ensembles. J. Eur. Math. Soc.
		%	(JEMS), 17(8):1927-2036, 2015.
		
		
		
		
		
		%	\bibitem{4} Diaconis P, Shahshahani M. On the eigenvalues of random matrices[J]. Journal of Applied Probability, 1994, 31(A): 49-62.
		
		\bibitem{Deift_1}P. Deift, T. Kriecherbauer, K. T-R. McLaughlin, S. Venakides and X. Zhou, Uniform asymptotics for polynomials orthogonal with respect to varying exponential weights and applications to universality questions in random matrix theory. Commun. Pure Appl. Math, 1999, 52(11), 1335--1425.
		
		%	\bibitem{widom} Deift P, Its A, Krasovsky I, et al. The Widom–Dyson constant for the gap probability in random matrix theory[J]. Journal of computational and applied mathematics, 2007, 202(1): 26-47.
		
		
		\bibitem{DY}B. Delyon and J. Yao, On the spectral distribution of Gaussian random matrices, Acta Mathematicae Applicatae Sinica, English Series 22, 297--312 (2006).
		\bibitem{EY}L. Erd\H{o}s  and H.-T. Yau, Gap universality of generalized Wigner and $\beta$-ensembles, J. Eur. Math. Soc. 17, 1927--2036. 
		
		\bibitem{FG}A. Figalli and A. Guionnet, Universality in several-matrix models via
		approximate transport maps. Acta Math., 217(1) 81--176, 2016.
		
		\bibitem{FTW}R. Feng, G. Tian, and D. Wei, Small gaps of GOE,  Geom. Funct. Anal. 29 (2019), no. 6, 1794--1827.
		
		
		
		
		\bibitem{FW2}R. Feng and D. Wei, Small gaps of circular $\beta$-ensemble, Ann. Probab. 49 (2021), no. 2, 997--1032.
		\bibitem{FW} R. Feng and D. Wei, Large gaps of CUE and GUE, arXiv:1807.02149. 
		
		\bibitem{GR}I. S. Gradshteyn and I.M.  Ryzhik,  \emph{Table of integrals, series, and products}. Translated from the Russian. Translation edited and with a preface by Alan Jeffrey and Daniel Zwillinger. Seventh edition. Elsevier/Academic Press, Amsterdam, 2007. 
		
		%\bibitem{Goh} Gohberg I, Goldberg S, Krupnik N. Traces and determinants of linear operators[M]. Birkhäuser, 2012.
		
		%\bibitem{CEX} Cipolloni, G., Erdős, L., and Xu, Y. Universality of extremal eigenvalues of large random matrices. arXiv preprint arXiv:2312.08325, 2023.
		
		%\bibitem{H} Han, Y. Small ball probability for multiple singular values of symmetric random matrices. arXiv preprint arXiv:2405.04999, 2024. 
		
		%  \bibitem{KS}Katz, N., and Sarnak, P. Random Matrices, Frobenius Eigenvalues, and Monodromy. In Colloquium Publications. American Mathematical Society, 1998.
		%	\bibitem{For} P. Forrester, \emph{Log-gases and random matrices} (LMS-34). Princeton University Press, 2010.
		
		\bibitem{KS}T. Kriecherbauer and K. Schubert, Spacings: an example for universality in random matrix theory. Random matrices and iterated random functions, 45--71, Springer Proc. Math. Stat., 53, Springer, Heidelberg, 2013. 
		
		\bibitem{LLM}B. Landon, P. Lopatto and J. Marcinek, Comparison theorem for some extremal eigenvalue statistics, Ann. Probab. 48 (2020), no. 6, 2894--2919. 
		
		%\bibitem{LM}Lopatto, P., and Meeker, M. Smallest gaps between eigenvalues of real Gaussian matrices. arXiv preprint arXiv:2403.09521, 2024. 
		
		\bibitem{M}M. Mehta, \emph{Random matrices}. Third edition. Pure and Applied Mathematics (Amsterdam), 142. Elsevier/Academic Press, Amsterdam, 2004.
		
		%\bibitem{O} M. Otto, Couplings and Poisson approximation for stabilizing functionals of determinantal point processes, arXiv:2402.08588. 
		
		
		%   \bibitem{PR} M. Plancherel and W. Rotach,  Sur les valeurs asymtotiques des polynmes d'Hermite $H_n(x)=(-I)^n e^{\frac{x^2}{2}} \frac{d^n}{d x^n}\big(e^{-\frac{x^2}{2}}\big)$, Comment. Math. Helv. 1 (1929), no. 1, 227--254.  		
		%\bibitem{Sch} K. Schubert, On the convergence of the nearest neighbour eigenvalue spacing distribution for
		%orthogonal and symplectic ensembles, PhD thesis, Ruhr-Universit\"at Bochum (2012).
		
		
		% \bibitem{Sh}M. Shcherbina, Change of variables as a method to study general $\beta$-models: Bulk universality, 
		%J. Math. Phys. 55, 043504 (2014).
		
		%\bibitem{T2} Tao, Terence, and Van Vu. Random matrices: Universality of local eigenvalue statistics. Acta mathematica 206.1 (2011): 127-204.
		
		%	\bibitem{SJ}
		%	Shi D., and Jiang Y.. "Smallest gaps between eigenvalues of random matrices with complex Ginibre, Wishart and universal unitary ensembles." arXiv preprint arXiv:1207.4240 (2012).
		
		%	\bibitem{3}A. Soshnikov, Determinantal random point fields, Russian Math. Surveys 55: 5 923--975. 
		
		%	\bibitem{5} A. Soshnikov, Level spacings distribution for large random matrices: Gaussian fluctuations, Annals of Mathematics, 148 (1998), 573--617.
		
		\bibitem{So}A. Soshnikov, Statistics of extreme spacing in determinantal random point processes, Mosc. Math. J. 5 705--719, 744.
		\bibitem{T}T. Tao, The asymptotic distribution of a single eigenvalue gap of a Wigner matrix, Probab. Theory Relat. Fields 157, 81--106 (2013).
		
		%   \bibitem{stein} E. M. Stein and T. S. Murphy, \emph{Harmonic analysis: real-variable methods, orthogonality, and oscillatory integrals}, Princeton University Press, 1993.
		\bibitem{V}J. Vinson,  Closest spacing of eigenvalues, Ph.D. thesis, Princeton University, 2001.
		
		
	\end{thebibliography}
\end{document}